\DeclareSymbolFont{rsfscript}{OMS}{rsfs}{m}{b}
\DeclareSymbolFontAlphabet{\mathrsfs}{rsfscript}
\definecolor{shadecolor}{gray}{0.90}
\def\bfit{\bfseries\itshape}
\newtheorem{theo}{Theorem}[section]
\newtheorem{prop}[theo]{Proposition}
\newtheorem{lem}[theo]{Lemma}
\newtheorem{coro}[theo]{Corollary}
\newtheorem{defi}[theo]{Definition}
\def\equat{\refstepcounter{theo}\begin{equation}}
\def\endequat{\end{equation}}
    \def\CM{{\mathbb{C}}}
    \def\FM{{\mathbb{F}}}
  \def\gG{{\mathfrak g}}  
  \def\hG{{\mathfrak h}}
    \def\NM{{\mathbb{N}}}
\def\SG{{\mathfrak S}}
    \def\ZM{{\mathbb{Z}}}
    \def\BC{{\mathcal{B}}}
    \def\CC{{\mathcal{C}}}
    \def\EC{{\mathcal{E}}}
    \def\FC{{\mathcal{F}}}
\def\Gb{{\mathbf G}}    
\def\Hb{{\mathbf H}}    \def\HC{{\mathcal{H}}}
\def\Lb{{\mathbf L}}
    \def\OC{{\mathcal{O}}}
\def\Pb{{\mathbf P}}    \def\PC{{\mathcal{P}}}
\def\Xb{{\mathbf X}}    \def\XC{{\mathcal{X}}}
    \def\YC{{\mathcal{Y}}}
\def\Zb{{\mathbf Z}}    \def\ZC{{\mathcal{Z}}}
\def\Hrm{{\mathrm{H}}}    
\def\Irm{{\mathrm{I}}}
\def\Trm{{\mathrm{T}}}
    \def\XCB{{\boldsymbol{\mathcal{X}}}}
    \def\YCB{{\boldsymbol{\mathcal{Y}}}}
\def\Zrm{{\mathrm{Z}}}    \def\ZCB{{\boldsymbol{\mathcal{Z}}}}
\def\Ati{{\tilde{A}}}
\def\Qov{{\overline{Q}}}
\def\a{\alpha}
\def\b{\beta}
\def\g{\gamma}
\def\d{\delta}
\def\D{\Delta}
\def\e{\varepsilon}
\def\l{\lambda}
\def\o{\omega}
\def\r{\rho}
\def\s{\sigma}
\def\th{\theta}
\def\z{\zeta}
\def\mub{{\boldsymbol{\mu}}}
\DeclareMathOperator{\diag}{{\mathrm{diag}}}
\DeclareMathOperator{\Id}{{\mathrm{Id}}}
\DeclareMathOperator{\im}{{\mathrm{Im}}}
\DeclareMathOperator{\Irr}{{\mathrm{Irr}}}
\DeclareMathOperator{\Ker}{{\mathrm{Ker}}}
\DeclareMathOperator{\Mat}{{\mathrm{Mat}}}
\DeclareMathOperator{\Res}{{\mathrm{Res}}}
\DeclareMathOperator{\Core}{{\mathrm{Core}}}
\def\to{\rightarrow}
\def\longto{\longrightarrow}
\def\injto{\hookrightarrow}
\def\fonction#1#2#3#4#5{\begin{array}{rccc}
{#1} : & {#2} & \longto & {#3}  \\
& {#4} & \longmapsto & {#5} 
\end{array}}
\def\vide{\varnothing}
\def\DS{\displaystyle}
\def\finl{~$\blacksquare$}
\def\lexp#1#2{\kern\scriptspace\vphantom{#2}^{#1}\kern-\scriptspace#2}
\def\le{\hspace{0.1em}\mathop{\leqslant}\nolimits\hspace{0.1em}}
\def\ge{\hspace{0.1em}\mathop{\geqslant}\nolimits\hspace{0.1em}}
\mathchardef\inferieur="321E
\mathchardef\superieur="321F
\def\eqna{\begin{eqnarray*}}
\def\endeqna{\end{eqnarray*}}
\def\itemth#1{\item[${\mathrm{(#1)}}$]}
\def\gfq{{\FM_{\! q}}}
\long\def\@car#1#2\@nil{#1}
\long\def\@first#1#2{#1}
\long\def\@second#1#2{#2}
\long\def\ifempty#1{\expandafter\ifx\@car#1@\@nil @\@empty
  \expandafter\@first\else\expandafter\@second\fi}
\def\GL{{\mathrm{GL}}}
\DeclareMathOperator{\Ref}{Ref}
\def\boitegrise#1#2{\begin{centerline}{\fcolorbox{black}{shadecolor}{~
    \begin{minipage}[t]{#2}{\vphantom{~}#1\vphantom{$A_{\DS{A_A}}$}}
            \end{minipage}~}}\end{centerline}\medskip}
\theoremstyle{remark}
\newtheorem{rema}[theo]{Remark}
\newtheorem{exemple}[theo]{Example}
\theoremstyle{plain}
\def\BIL{LR}
\def\GAUCHE{L}
\def\CAR{CAR}
\def\FAM{FAM}
\def\euler{{\mathrm{eu}}}
\def\xyinj{\ar@{^{(}->}}
\def\xysur{\ar@{->>}}
\def\res{{\mathrm{res}}}
\DeclareMathOperator{\Rep}{{\mathrm{Rep}}}
\def\hlinewd#1{%
\noalign{\ifnum0=`}\fi\hrule \@height #1 %
\futurelet\reserved@a\@xhline}
\newlength\epaisLigne
\newcommand{\longiso}{\stackrel{\sim}{\longrightarrow}}
\newcommand{\longbij}{\stackrel{\sim}{\leftrightarrow}}
\def\Rees{\operatorname{Rees}\nolimits}
\def\codim{\operatorname{codim}\nolimits}
\def\hlinewd#1{%
\noalign{\ifnum0=`}\fi\hrule \@height #1 %
\futurelet\reserved@a\@xhline}
\newcommand{\longsurto}{\relbar\joinrel\twoheadrightarrow}
\def\GL{\operatorname{\Gb\Lb}\nolimits}
\def\hd{{\widehat d}}
\begin{document}

\title{Fixed points in smooth Calogero-Moser spaces}

\author{{\sc C\'edric Bonnaf\'e}}
\address{
Institut Montpelli\'erain Alexander Grothendieck (CNRS: UMR 5149), 
Universit\'e Montpellier 2,
Case Courrier 051,
Place Eug\`ene Bataillon,
34095 MONTPELLIER Cedex,
FRANCE} 

\makeatletter
\email{cedric.bonnafe@umontpellier.fr}
\makeatother

\author{{\sc Ruslan Maksimau}}

\address{
Institut Montpelli\'erain Alexander Grothendieck (CNRS: UMR 5149), 
Universit\'e Montpellier 2,
Case Courrier 051,
Place Eug\`ene Bataillon,
34095 MONTPELLIER Cedex,
FRANCE} 
\email{ruslan.maksimau@umontpellier.fr}

\date{\today}

\thanks{The first author is partly supported by the ANR 
(Project No ANR-16-CE40-0010-01 GeRepMod)}

\maketitle
\pagestyle{myheadings}
\markboth{\sc C. Bonnaf\'e \& R. Maksimau}{\sc Fixed points in smooth Calogero-Moser spaces}

\bigskip

\begin{abstract} 
We prove that every irreducible component of the fixed point variety under the action of $\mub_d$ 
in a smooth Calogero-Moser space is isomorphic to a Calogero-Moser space associated with another 
reflection group. 
\end{abstract}

Consider a finite subgroup $W\subset \GL_\CM(V)$ of automorphisms of a finite dimensional complex vector space $V$,  
generated by reflections and let $c$ be a complex valued function on the set of conjugacy classes of 
reflections of $W$. To the triple $(V,W,c)$, Etingof and Ginzburg~\cite{EG} 
have associated an affine variety $\ZCB_{\! c}(V,W)$, the {\it Calogero-Moser space}, 
which is defined as the spectrum of the center of the rational Cherednik algebra 
$\Hb_c$ {\it ``at $t=0$''} (see Section~\ref{sec:set-up} for  a precise definition). 
The algebra $\Hb_c$ carries a $\ZM$-grading (i.e., a $\CM^\times$-action), which induces 
a $\CM^\times$-action on the Calogero-Moser space $\ZCB_{\! c}(V,W)$. 

\smallskip

We define a {\it reflection subquotient} of $(V,W)$ to be a pair $(V',W')$ where 
$V'$ is a subspace of $V$, $W' \subset \Gb\Lb_\CM(V')$ is generated by reflections 
and there exists a subgroup $H$ of $W$ which stabilizes $V'$ and whose image in $\Gb\Lb_\CM(V')$ 
is exactly $W'$. 

Note that any element in $\Gb\Lb_\CM(V)$ normalizing $W$ acts on $\ZCB_{\! c}(V,W)$. 
In~\cite[Conjecture~FIX]{BR}, Rouquier and the first author proposed the following conjecture:

\bigskip

\begin{quotation}
\noindent{\bf Conjecture F.} {\it Let $\s \in \Gb\Lb_\CM(V)$ be an element of finite order and 
normalizing $W$ and let $\XCB$ 
be an irreducible component of $\ZCB_{\! c}(V,W)^\s$ (endowed with its reduced scheme structure). 
Then there exists a reflection subquotient $(V',W')$ of $W$ and a complex valued function 
$c'$ on the set of conjugacy classes of reflections in $W'$ such that 
$\XCB \simeq \ZCB_{\! c'}(V',W')$, as varieties endowed with a $\CM^\times$-action.}
\end{quotation}

\bigskip

Note that the conjecture stated in~\cite[Conjecture~FIX]{BR} does not give an explicit description 
of $c'$ in terms of $c$: it is just mentioned that the map $c \mapsto c'$ should 
be linear. Note also that, as stated, it might be a little bit optimistic: 
maybe one should replace $\XCB$ by its normalization (as it is not clear whether 
$\XCB$ is normal or not). A special case of an element $\s$ normalizing $W$ 
is when $\s$ is a root of unity, viewed as the corresponding homothety on $V$. 
The aim of this paper is to prove the following result:

\bigskip

\noindent{\bf Theorem.} {\it Conjecture~F holds if $\ZCB_{\! c}(V,W)$ 
is smooth and $\s$ is a root of unity.}

\bigskip

The proof is by case-by-case analysis, as the triples $(V,W,c)$ such that 
$\ZCB_{\! c}(V,W)$ is smooth are classified (see the works of Etingof-Ginzburg~\cite{EG}, 
Gordon~\cite{gordon},~Bellamy~\cite{bellamy}). The classification and the conjecture can be 
easily reduced to the case where $W$ acts irreducibly on $V/V^W$ and in this case, 
the smoothness of $\ZCB_{\! c}(V,W)$ implies that $W$ is of type $G(l,1,n)$ or $G_4$ in 
Shephard-Todd classification. The case of $G_4$ can be handled by computer calculations 
(see Section~\ref{sec:g4}), while the infinite family case will be handled 
by using an isomorphism between $\ZCB_{\! c}(V,W)$ and 
a quiver variety.

\bigskip

\noindent{\it Commentary.} 
The motivation for Conjecture~F comes from the modular 
representation theory of finite reductive groups and conjectures of 
Brou\'e-Malle-Michel about the endomorphism algebra of 
some Deligne-Lusztig variety (see~\cite[\S{1.A}]{brouemalle} 
and~\cite[Conj.~5.7]{brouemichel}). Experimentally, 
there is an astonishing analogy between the combinatorics 
involved in Brou\'e-Malle-Michel conjectures and the one involved 
in the geometry of $\ZC_{\! c}(V,W)^{\mub_d}$. 

The results of this paper confirm this analogy in the 
particular case of the general linear group $G=\Gb\Lb_n(\gfq)$ 
(whose Weyl group $W$ is the symmetric group $\SG_n$). Let us be more 
precise. Let $\ell$ be a prime number not dividing $q$ and let $d$ 
denote the order of $q$ modulo $\ell$. Then (see~\S{4}):
\begin{itemize}
\item The $\ell$-blocks of unipotent characters of $G$ are parametrized 
by the possible $d$-cores of partitions of $n$, as well 
as the irreducible components of $\ZC_{\! 1}(\CM^n,\SG_n)^{\mub_d}$.

\item Unipotent characters lying in the block $\BC_\g$ parametrized 
by the $d$-core $\g$ are in bijection with the irreducible 
characters of $G(d,1,r)$ (where $r=(n-|\g|)/d$). On the other side, 
the irreducible component $\XC_\g$ 
of $\ZC_{\! 1}(\CM^n,\SG_n)^{\mub_d}$ parametrized by $\g$ 
is isomorphic to $\ZC_{\! c'}(\CM^r,G(d,1,r))$ and so its 
$\CM^\times$-fixed points are also parametrized by $\Irr G(d,1,r)$. 

\item The conjectures of Brou\'e-Malle-Michel say that 
the unipotent characters lying in $\BC_\g$ are exactly the ones 
appearing in the cohomology of some Deligne-Lusztig 
variety $\Xb_\g$, and that the endomorphism algebra 
of the cohomology of $\Xb_\g$ is isomorphic to some 
Hecke algebra $\HC(G(d,1,r),c'')$ for some explicitly given 
parameter $c''$. Example~\ref{ex:parametre} show that $c''=c'$ (!).
\end{itemize}

\bigskip

\section{Notation and main result}\label{sec:set-up}

\medskip

All along this note, we will abbreviate $\otimes_\CM$ as $\otimes$. By an algebraic variety, 
we mean a reduced scheme of finite type over $\CM$. 

\bigskip

\boitegrise{{\bf Set-up.} {\it We fix in this paper a $\CM$-vector space $V$ 
of finite dimension $n$ and a finite subgroup $W$ of $\GL_\CM(V)$. We set
$$\Ref(W)=\{s \in W~|~\dim_\CM V^s=n-1\}$$
and we assume that
$$W=\langle \Ref(W) \rangle.$$
We also fix an element $\s \in \Gb\Lb_\CM(V)$ of finite order normalizing $W$.}}{0.75\textwidth}

\medskip

We set $\e : W \to \CM^\times$, $w \mapsto \det(w)$. 
If $s \in \Ref(W)$, we denote by $\a_s^\vee$ and $\a_s$ two elements of $V$ and $V^*$ respectively 
such that $V^s=\Ker(\a_s)$ and $V^{* s}=\Ker(\a_s^\vee)$, where $\a_s^\vee$ is viewed as a linear 
form on $V^*$.

\bigskip
\def\la{\langle}
\def\ra{\rangle}

\subsection{Rational Cherednik algebra at $t=0$} 
\label{sub:Cher}
All along this note, we fix a function $c : \Ref(W) \to \CM$ which 
is invariant under conjugacy. We define the $\CM$-algebra $\Hb_c$ to be the quotient 
of the algebra $\Trm(V\oplus V^*)\rtimes W$ (the semi-direct product of the tensor algebra 
$\Trm(V \oplus V^*)$ with the group $W$) 
by the relations 
$$\begin{cases}
[x,x']=[y,y']=0,\\
[y,x]=\DS{\sum_{s\in\Ref(W)}(\e(s)-1)c_s
\frac{\la y,\alpha_s\ra\la\alpha_s^\vee,x\ra}{\la\alpha_s^\vee,\alpha_s\ra}s},
\end{cases}\leqno{(\HC_c)}$$
for all $x$ ,$x'\in V^*$, $y$, $y'\in V$. The algebra $\Hb_c$ is called the 
{\it rational Cherednik algebra at $t=0$}. 

The first commutation relations imply that 
we have morphisms of algebras $\CM[V] \to \Hb_c$ and $\CM[V^*] \to \Hb_c$. 
Recall~\cite[Corollary~4.4]{EG} 
that we have an isomorphism of $\CM$-vector spaces 
\equat\label{eq:pbw}
\CM[V] \otimes \CM W \otimes \CM[V^*] \longiso \Hb_c
\endequat
induced by multiplication (this is the so-called {\it PBW-decomposition}). 

We denote by $\Zb_c$ the center of $\Hb_c$: it is well-known~\cite[Lemma~3.5]{EG} that 
$\Zb_c$ is an integral domain, which is integrally closed and contains 
$\CM[V]^W$ and $\CM[V^*]^W$ as subalgebras (so it contains $\Pb=\CM[V]^W \otimes \CM[V^*]^W$), 
and which is a free $\Pb$-module of rank $|W|$. We denote by $\ZCB_{\! c}$ the 
algebraic variety whose ring of regular functions $\CM[\ZCB_{\! c}]$ is $\Zb_c$: 
this is the {\it Calogero-Moser space} associated with the datum $(V,W,c)$. 
If necessary, we will write $\ZCB_{\! c}(V,W)$ for $\ZCB_{\! c}$.

\bigskip

\begin{exemple}\label{ex:cyclique}
Let $l \ge 1$, let $\zeta$ be a primitive $l$-th root of unity and let $\mub_l=\langle \zeta\rangle$. 
We assume in this example that $n=1$ and $W=\langle t \rangle \simeq \mub_l$, where $t(v)=\zeta v$ for all 
$v \in V$. Then $\Ref(W)=\{t,t^2,\dots,t^{l-1}\}$ and we set for 
simplification
$$k_j=\frac{1}{l} \sum_{i=1}^{l-1} \zeta^{-i(j-1)} c_{t^i}$$
($0 \le j \le l-1$). Note that $k_0+k_1+\cdots k_{l-1}=0$. Then
$$\ZCB_{\! c} \simeq \{(x,y,e) \in \CM^3~|~\prod_{i=0}^{l-1} (e-lk_i)=xy\}$$
(see for instance~\cite[Theorem~18.2.4]{BR}). In particular, $\ZCB_{\!c}$ is smooth if and only if 
$$\prod_{0 \le i < j \le l-1} (k_i-k_j) \neq 0.$$
Note also that the inclusion $\Pb \subset \Zb_c$ corresponds to the morphism 
of algebraic varieties $\ZCB_{\! c} \to \CM^2$, $(x,y,e) \mapsto (x,y)$.

The $\CM^\times$-action is given by the formula 
$$\xi \cdot (x,y,e)=(\xi^d x,\xi^{-d}y,e),$$
where $\xi \in \CM^\times$ and $(x,y,e) \in \ZCB_{\! c}$.\finl
\end{exemple}

\bigskip

\subsection{Action of the normalizer}
The normalizer $N_{\Gb\Lb_\CM(V)}(W)$ of $W$ in $\Gb\Lb_\CM(V)$ 
acts on $V$ naturally, on $V^*$ by the contragredient action 
and on $W$ by conjugation. This endows $\Trm(V \oplus V^*) \rtimes W$ with an action 
of $N_{\Gb\Lb_\CM(V)}(W)$ and it is easily checked that the bunch of relations~$(\HC_c)$ is stable 
under this action. So $\Hb_c$ inherits an action of $N_{\Gb\Lb_\CM(V)}(W)$. In particular, 
its centre $\Zb_c$ also inherits such an action, and this defines an action of 
$N_{\Gb\Lb_\CM(V)}(W)$ on the Calogero-Moser space $\ZCB_{\! c}$.

\bigskip

\begin{theo}\label{theo:main}

Assume that $\ZCB_{\! c}$ is smooth and that $\s$ is a root of unity, and let 
$\XCB$ be an irreducible component of $\ZCB_{\! c}^\s$. Then there exists 
a reflection subquotient $(V',W')$ of $(V,W)$ and a complex-valued map $c'$ on the 
set of conjugacy classes of reflections of $W'$ such that 
$$\XCB \simeq \ZCB_{\! c'}(V',W'),$$
as varieties endowed with a $\CM^\times$-action.
\end{theo}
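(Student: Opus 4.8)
The plan is to reduce to the two families of irreducible reflection groups that can carry a smooth Calogero-Moser space, and then to treat the infinite family by means of a quiver-variety model. First I would record that $\s=\zeta\,\Id_V$ for a primitive $d$-th root of unity $\zeta$, so that on the generators of $\Hb_c$ it acts by $y\mapsto\zeta y$ for $y\in V$, by $x\mapsto\zeta^{-1}x$ for $x\in V^*$ and trivially on $W$. This is precisely the action of the element $\zeta^{-1}$ of the subgroup $\mub_d$ of the grading torus $\CM^\times$. Hence $\ZCB_c^\s=\ZCB_c^{\mub_d}$, the fixed locus of a finite group of the grading $\CM^\times$-action; since $\ZCB_c$ is smooth and we work in characteristic $0$, this fixed locus is itself smooth, so its irreducible components are exactly its connected components, and each is stable under the residual $\CM^\times$-action. (If $d=1$ there is nothing to prove.)

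Next I would reduce to the irreducible case. Writing $V=V^W\oplus U$ with $U^W=0$ and decomposing $U=\bigoplus_i U_i$ into irreducible $W$-constituents, so that $W=\prod_i W_i$, the PBW-decomposition makes the variables coming from $V^W$ and $(V^W)^*$ central and yields a product
\[
\ZCB_c(V,W)\;\simeq\;\bigl(V^W\times(V^W)^*\bigr)\times\prod_i \ZCB_{c_i}(U_i,W_i).
\]
Because $\s$ acts diagonally, both the fixed locus and its connected components factor accordingly; since a product of reflection subquotients is again a reflection subquotient, it suffices to treat each irreducible factor $(U_i,W_i)$, the abelian factor being immediate. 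By the classification of smooth Calogero-Moser spaces (\cite{EG}, \cite{gordon}, \cite{bellamy}), an irreducible $W$ with $\ZCB_c$ smooth is of type $G(l,1,n)$ or $G_4$.

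For $G_4$ only finitely many values of $d$ are relevant, and $\ZCB_c(V,G_4)$ admits an explicit presentation, so the components of each $\mub_d$-fixed locus together with their $\CM^\times$-action can be listed and matched, by direct computation, with Calogero-Moser spaces of reflection subquotients. For the family $G(l,1,n)$ I would use the isomorphism between $\ZCB_c(V,G(l,1,n))$ and a Nakajima quiver variety $\MG$ attached to the cyclic quiver with $l$ vertices (affine type $A_{l-1}^{(1)}$), with dimension vector $(n,\dots,n)$ and a one-dimensional framing, under which the grading $\CM^\times$-action becomes a scaling action on the quiver representations. A $\mub_d$-fixed point then carries a compatible $\ZM/d$-grading on its underlying vector spaces, with each arrow shifting the grading by its weight; decomposing into weight spaces identifies each connected component of $\MG^{\mub_d}$ with a product of quiver varieties for the $d$-fold unrolled cyclic quiver, whose dimension and framing vectors sum to the original data. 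Each such quiver variety is again the Calogero-Moser space of a group $\prod_j G(l_j,1,n_j)$, which I would realize as a reflection subquotient of $G(l,1,n)$, reading off the linear parameter $c'$ and the $\CM^\times$-weights from the weight decomposition.

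The hard part will be this last, quiver-theoretic step. One must show that every connected component of the $\mub_d$-fixed locus is, with its reduced structure, isomorphic \emph{as a $\CM^\times$-variety} (and not merely abstractly) to such a product of cyclic quiver varieties; index the components by the admissible splittings of the dimension and framing vectors; and verify that the groups so produced are genuine reflection subquotients of $W$ carrying the correctly induced parameter $c'$. Keeping the framing and stability bookkeeping consistent, and matching the $\CM^\times$-weights exactly, is where the substantive work lies.
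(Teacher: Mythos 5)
Your outline follows the same route as the paper (identify $\s$ with an element of the grading torus, reduce to $W$ irreducible on $V/V^W$, invoke the classification to land on $G_4$ or $G(l,1,n)$, do $G_4$ by explicit computation, and model $G(l,1,n)$ by a cyclic-quiver variety), but the decisive quiver-theoretic step is wrong as stated, and the idea that actually makes it work is missing. After replacing $\mub_d$ by $\langle \mub_d,\mub_l\rangle$ (the subgroup $\mub_l$ acts trivially, via the element $(\z^i\Id)_i$ of $\Gb\Lb(n\d_l)$), each connected component of the fixed locus is a \emph{single} quiver variety $\XC_{\th[k]}(d)$ for the covering cyclic quiver with $m=kl$ vertices, with dimension vector $d$ obtained from the eigenspace decomposition --- it is not a product of quiver varieties, and the group that eventually appears is a single $G(kl,1,r)$, not a product $\prod_j G(l_j,1,n_j)$. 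More importantly, such a component is \emph{not} directly a Calogero--Moser space: Gordon's isomorphism only identifies $\XC_{\th'}(r\d_m)$, i.e.\ the case of a \emph{constant} dimension vector, with $\ZCB_{c'}(G(m,1,r))$, whereas the $d$'s occurring here are constant only up to the residue of an $m$-core. The missing ingredient is the affine Weyl group action on pairs $(d,\th)$ and the Lusztig/Crawley--Boevey reflection-functor isomorphisms $\XC_{s_j(\th)}(s_j(d))\simeq\XC_\th(d)$ (valid when $\th_j\neq 0$, which requires a Crawley--Boevey-type argument to choose a safe word): one needs $w\in W^\aff_m$ with $w(d)=r\d_m$ to straighten the dimension vector before applying Gordon's theorem in reverse. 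This step is also what makes $c\mapsto c'$ linear, since $w$ acts linearly on $\th$.

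Two further points your sketch leaves unaddressed and that genuinely require proof. First, nonnegativity: you must know that the straightened multiple $r\d_m$ has $r\geqslant 0$ and, for the reflection-subquotient realization, that $kr\leqslant n$; the paper gets both from core combinatorics (the components are indexed by residues of $m$-cores with trivial $l$-core, and $d-r\d_m$ has nonnegative entries). Second, the subquotient itself: even once you know the component is $\ZCB_{c'}(G(kl,1,r))$, you must exhibit $G(kl,1,r)$ as a reflection subquotient of $G(l,1,n)$; the paper does this explicitly by taking an element $g\in G(l,1,n)$ built from $r$ blocks of a $k$-cycle twisted by $\z$, letting $V'$ be a $\xi$-eigenspace of $g$ (of dimension $r$, which is where $kr\leqslant n$ enters) and $W'$ the image of $C_W(g)$ in $\Gb\Lb_\CM(V')$. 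Without the reflection-functor step your argument cannot identify the components with Calogero--Moser spaces at all, so the gap is essential rather than a matter of bookkeeping.
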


\bigskip

\begin{rema}\label{rem:stupide}
Note that if $\XCB$ is reduced to a point, then the theorem is easy because 
in this case, $\XCB \simeq \ZCB_{\! 0}(0,1)$.\finl
\end{rema}

\bigskip

This Theorem will be proved in Sections~\ref{sec:reductions} and~\ref{sec:g4}. 

\bigskip

\def\cod{{\mathrm{cod}}}

\subsection{Filtration of the group algebra}
If $w \in W$, we set $\cod(w)=\codim_\CM(V^w)$. We define a filtration $\FC_\bullet(\CM W)$ 
of the group algebra of $W$ as follows: let
$$\FC_i(\CM W)=\bigoplus_{\cod(w) \le i} \CM w.$$
Then 
$$\CM\Id_V=\FC_0(\CM W)\subset \FC_1(\CM W)\subset \cdots\subset\FC_n(\CM W)=\CM W=\FC_{n+1}(\CM W)= \cdots$$ 
is a filtration of $\CM W$. For any subalgebra $A$ of $\CM W$, we set $\FC_i(A)=A\cap\FC_i(\CM W)$, 
so that 
$$\CM\Id_V=\FC_0(A)\subset \FC_1(A)\subset \cdots\subset\FC_n(A)=A=\FC_{n+1}(A)=\cdots$$ 
is also a filtration of $A$. 

Now, assume that $\ZCB_{\! c}(V,W)$ is smooth, let $\s$ be a root of unity and let 
$\XC$ be an irreducible component of $\ZCB_{\! c}(V,W)^\s$. According to Theorem~\ref{theo:main}, 
there exists a reflection subquotient $(V',W')$ of $(V,W)$, a complex-valued map $c'$ on the 
set of conjugacy classes of reflections of $W'$ and a $\CM^\times$-equivariant 
isomorphism $i : \ZCB_{\! c'}(V',W') \longiso \XC$. We will view $i$ as a closed 
immersion $i : \ZCB_{\! c'}(V',W') \injto \ZCB_{\! c}(V,W)$. 
By~\cite[Corollary~5.8]{gordon}, the smoothness of $\ZCB_{\! c}(V,W)$ implies that there 
exists a bijection between $\Irr(W)$ and $\ZCB_{\! c}(V,W)^{\CM^\times}$. 
As the fixed point variety of a finite group in a smooth complex algebraic variety 
is still smooth, this means that $\XCB$ is smooth, and so $\ZCB_{\! c'}(V',W')$ is smooth. 
Applying again~\cite[Corollary~5.8]{gordon}, we get another bijection between 
$\Irr(W')$ and $\ZCB_{\! c'}(V',W')^{\CM^\times}$. As $\XC^{\CM^\times} \subset \ZCB_{\! c}(V,W)^{\CM^\times}$, 
this gives an injective map $i_\XC : \Irr(W') \injto \Irr(W)$, depending on $\XC$ and the choice 
of $i$. This allows to define a surjective morphism of algebras 
$$i_\XC^* : \Zrm(\CM W) \longsurto \Zrm(\CM W')$$
as follows: if $\chi \in \Irr(W)$, let $e_\chi^W$ denote the corresponding primitive 
central idempotent of $\CM W$ and set
$$i_\XC^*(e_\chi^W)=
\begin{cases}
e_{\chi'}^{W'} & \text{if $\chi' \in \Irr(W')$ is such that $i_\XC(\chi')=\chi$,}\\
0 & \text{if $\chi$ does not belong to the image of $i_\XC$.}
\end{cases}
$$
Using results of Shan and the first author~\cite{BS}, Theorem~\ref{theo:main} has the following consequence:

\bigskip

\begin{coro}\label{coro:filtration}
If $i \ge 0$, then $i_\XC^*(\FC_i(\Zrm(\CM W))) \subset \FC_i(\Zrm(\CM W'))$. 
\end{coro}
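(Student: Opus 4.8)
The plan is to strip $i_\XC^*$ of its combinatorial definition, recognise it as restriction of functions along the $\CM^\times$-equivariant closed immersion $i$, and then transport the codimension filtration to the fixed-point locus using~\cite{BS}, where it acquires a geometric meaning that is automatically compatible with such restrictions.

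First I would rewrite $i_\XC^*$ geometrically. Since $\ZCB_{\!c}(V,W)$ and $\ZCB_{\!c'}(V',W')\simeq\XC$ are both smooth, \cite[Corollary~5.8]{gordon} identifies their $\CM^\times$-fixed loci — which are finite and reduced — with $\Irr(W)$ and $\Irr(W')$. Sending a primitive central idempotent to the indicator function of the matching fixed point gives algebra isomorphisms $\Zrm(\CM W)\longiso\CM[\ZCB_{\!c}(V,W)^{\CM^\times}]$ and $\Zrm(\CM W')\longiso\CM[\XC^{\CM^\times}]$. As $i$ is $\CM^\times$-equivariant it maps $\XC^{\CM^\times}$ into $\ZCB_{\!c}(V,W)^{\CM^\times}$, and comparing with the definition of $i_\XC$ shows that under these isomorphisms $i_\XC^*$ is exactly the restriction homomorphism $\CM[\ZCB_{\!c}(V,W)^{\CM^\times}]\longsurto\CM[\XC^{\CM^\times}]$ induced by the inclusion of fixed-point sets: the indicator of $p_\chi$ restricts to the indicator of the corresponding point of $\XC^{\CM^\times}$ when $p_\chi\in\XC^{\CM^\times}$, and to $0$ otherwise, which matches the case distinction defining $i_\XC^*(e_\chi^W)$.

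Next I would feed in~\cite{BS}, whose role is to realise the codimension filtration $\FC_\bullet(\Zrm(\CM W))$ on the fixed-point algebra $\CM[\ZCB_{\!c}(V,W)^{\CM^\times}]$ as being induced from a natural, $\CM^\times$-stable filtration carried by $\ZCB_{\!c}(V,W)$ itself, rather than from the internal combinatorics of $W$. Granting such a description for both $(V,W,c)$ and $(V',W',c')$ — legitimate because $\XC$ is smooth, so~\cite[Corollary~5.8]{gordon} and~\cite{BS} apply equally to it — the filtered surjection $\CM[\ZCB_{\!c}(V,W)]\longsurto\CM[\XC]$ attached to the closed immersion $i$ respects the filtrations, and upon passing to fixed points it specialises to the restriction map of the previous paragraph. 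Hence $i_\XC^*$ sends $\FC_i(\Zrm(\CM W))$ into $\FC_i(\Zrm(\CM W'))$, which is the assertion.

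The heart of the argument, and its main obstacle, is this middle step: one must ensure that the filtration produced by~\cite{BS} is genuinely functorial along the specific immersion $i$ delivered by Theorem~\ref{theo:main}, and not merely defined one space at a time. The subtle feature is that $(V',W')$ is a reflection \emph{subquotient}: $W'$ is the image in $\GL_\CM(V')$ of a subgroup $H\le W$ stabilising $V'$, so codimensions on the primed side are computed inside $V'$. One must therefore match the $V'$-codimension filtration with the filtration pulled back from $V$. The group-theoretic shadow of the desired compatibility is the elementary bound $\dim_\CM V'-\dim_\CM(V'\cap V^h)\le \dim_\CM V-\dim_\CM V^h$ for $h\in H$, which follows from $(V')^{\bar h}=V'\cap V^h$ and a dimension count and shows that the $V'$-codimension of $\bar h$ never exceeds the $V$-codimension of $h$; but the full statement rests on~\cite{BS} identifying the filtration in a form that transports correctly across $i$.
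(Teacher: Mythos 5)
Your first step is correct and matches the paper's setup: both fixed-point loci are finite and reduced, so \cite[Corollary~5.8]{gordon} gives algebra isomorphisms $\Zrm(\CM W)\longiso\CM[\ZCB_{\!c}(V,W)^{\CM^\times}]$ and $\Zrm(\CM W')\longiso\CM[\XC^{\CM^\times}]$, under which $i_\XC^*$ is restriction of functions along the inclusion of fixed loci. The problem is the middle step, which you yourself flag as the ``heart'' and ``main obstacle'' and then leave unproved; moreover your guess at what \cite{BS} supplies is not what it actually supplies. \cite[Theorem~A]{BS} does not produce a filtration on the coordinate ring $\CM[\ZCB_{\!c}(V,W)]$ inducing the codimension filtration on the fixed-point algebra; it computes $\CM^\times$-equivariant cohomology: odd equivariant cohomology vanishes and $\Hrm^{2\bullet}_{\CM^\times}(\ZCB_{\!c}(V,W))\simeq \Rees_\FC(\Zrm(\CM W))=\bigoplus_{i\ge 0}\hbar^i\FC_i(\Zrm(\CM W))$ as $\CM[\hbar]$-algebras. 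This is exactly what makes the functoriality you are missing automatic: the $\CM^\times$-equivariant closed immersion $i:\ZCB_{\!c'}(V',W')\injto\ZCB_{\!c}(V,W)$ induces a pullback in equivariant cohomology, hence an algebra morphism $\Rees_\FC(\Zrm(\CM W))\longto\Rees_\FC(\Zrm(\CM W'))$ with no further argument needed. What then remains is to identify this pullback with $i_\XC^*$, and for that one uses not only the statement of \cite[Theorem~A]{BS} but its proof: it proceeds by restricting to the fixed locus and identifying $\Hrm^{2\bullet}_{\CM^\times}(\ZCB_{\!c}(V,W)^{\CM^\times})$ with $\CM[\hbar]\otimes\Zrm(\CM W)$ via \cite[Corollary~5.8]{gordon}, and likewise on the primed side. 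Functoriality of restriction to fixed points then embeds the map of Rees algebras into $\Id_{\CM[\hbar]}\otimes i_\XC^* : \CM[\hbar]\otimes\Zrm(\CM W)\to\CM[\hbar]\otimes\Zrm(\CM W')$ (this is where your first paragraph re-enters), and comparing coefficients of $\hbar^i$ gives the corollary. Without naming equivariant cohomology as the geometric carrier of the filtration, your argument has no mechanism forcing the ``filtration respected by restriction'' claim, so the proof as written does not go through.

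Your closing paragraph is also a red herring relative to this argument: one never compares the codimension filtrations of $W$ and $W'$ element by element, and the subquotient subtlety never arises, because \cite[Theorem~A]{BS} is invoked separately for $(V,W,c)$ and for $(V',W',c')$ (legitimate, as you note, since $\XC$ is smooth), and the two sides are only ever compared on fixed loci inside equivariant cohomology. The inequality $\dim_\CM V'-\dim_\CM(V'\cap V^h)\le \dim_\CM V-\dim_\CM V^h$ would instead be the natural starting point for a direct combinatorial proof of the corollary, whose existence the paper explicitly raises as an open question in Remark~\ref{rem:amusant}; by itself it cannot suffice, since $i_\XC^*$ is defined on idempotents rather than on group elements.
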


\bigskip
\def\pt{{\mathbf{pt}}}

\begin{proof}
If $\YCB$ is a complex algebraic variety endowed with a $\CM^\times$-action, 
we denote by $\Hrm^i_{\CM^\times}(\YCB)$ the $i$-th group of equivariant cohomology, 
with coefficients in $\CM$. 
Let $\hbar$ be an indeterminate and identify $\Hrm_{\CM^\times}(\pt)$ with $\CM[\hbar]$ 
as usual, with $\hbar$ homogeneous of degree $2$. Since $\ZCB_{\! c}(V,W)$ is smooth, it follows from~\cite[Theorem~A]{BS} 
that 
$$
\begin{cases}
\Hrm^{2i+1}_{\CM^\times}(\ZCB_{\! c}(V,W)) = 0 & \text{if $i \ge 0$,}\\
\Hrm_{\CM^\times}^{2\bullet} \simeq \Rees_\FC(\Zrm(\CM W)) & \text{as $\CM[\hbar]$-algebras,}\\
\end{cases}
$$
where $\Rees_\FC(\Zrm(\CM W)) = \bigoplus_{i \ge 0} \hbar^i \FC_i(\Zrm(\CM W)) \subset \CM[\hbar] \otimes_\CM \Zrm(\CM W)$ 
is the Rees algebra associated with the filtration $\FC_\bullet(\Zrm(\CM W))$. Using the analogous result 
for $\ZCB_{\! c'}(V',W')$ and the functoriality of equivariant cohomology, we get a morphism 
of algebras $i^* : \Rees_\FC(\Zrm(\CM W)) \longto \Rees_\FC(\Zrm(\CM W'))$. 

Now, we will use not only~\cite[Theorem~A]{BS} but also its proof: the proof goes by restriction 
to the fixed point subvariety $\ZCB_{\! c}(V,W)^{\CM^\times}$ and identifying 
$\Hrm^{2\bullet}_{\CM^\times}(\ZCB_{\! c}(V,W)^{\CM^\times})$ with $\CM[\hbar] \otimes \Zrm(\CM W)$ 
using~\cite[Corollary~5.8]{gordon}. As the same strategy holds for $\ZCB_{\! c'}(V',W')$, 
the functoriality of equivariant cohomology implies that the map $i^*$ fits into a commutative diagram
$$\diagram
\Rees_\FC(\Zrm(\CM W)) \xto[0,3]^{\DS{i^*}} \ar@{^(->}[dd] &&& \Rees_\FC(\Zrm(\CM W')) \ar@{^(->}[dd]\\
&& \\
\CM[\hbar] \otimes_\CM \Zrm(\CM W) \xto[0,3]^{\DS{\Id_{\CM[\hbar]} \otimes_\CM i_\XC^*}} &&& \CM[\hbar] \otimes_\CM \Zrm(\CM W').
\enddiagram$$
This shows the corollary.
\end{proof}

\bigskip

In Remark~\ref{rem:W'-W} and Theorem \ref{theo:recapitulatif}, we will describe combinatorially the injective 
map $i_\XC : \Irr(W') \injto \Irr(W)$ whenever $W$ is of type $G(l,1,n)$.

\bigskip

\section{Preliminaries on quiver varieties}\label{sec:quiver}

\medskip

\subsection{Quiver varieties}\label{sub:quiver}
Let $Q_l$ denote the cyclic quiver with $l$ vertices, defined as follows:
\begin{itemize}
\item[$\bullet$] Vertices: $i \in \ZM/l\ZM$;

\item[$\bullet$] Arrows: $y_i : i \longto i+1$, $i \in \ZM/l\ZM$. 
\end{itemize}
We denote by $\Qov_l$ the double quiver of $Q_l$ that is, the quiver obtained from $Q_l$ by adding an arrow 
$x_i : i+1 \to i$ for all $i \in \ZM/l\ZM$ (see Figure 1).

\begin{centerline}{
\psset{dimen=middle,xunit=1cm, yunit=1cm}
\begin{pspicture}(-3,-3)(3,3)%
\psset{linecolor=[rgb]{0,0,0},linewidth=0.4pt,linestyle=dashed,dash=2.5pt 2pt,linejoin=1}
\psellipticarcn(0,0)(2,2){77.1429}{231.4286}
\psset{linestyle=solid,fillstyle=solid,fillcolor=[rgb]{1,1,1}}
\rput{90}(-2,0){\psellipse(0,0)(0.1,0.1)}
\rput{90}(-1.247,-1.5637){\psellipse(0,0)(0.1,0.1)}
\rput{90}(-1.247,1.5637){\psellipse(0,0)(0.1,0.1)}
\rput{90}(0.445,1.9499){\psellipse(0,0)(0.1,0.1)}
\rput(-2.3,-0){\small $0$}
\rput(-1.434,1.7982){\small $1$}
\rput(0.5118,2.2423){\small $2$}
\rput(-1.434,-1.7982){\small $-1$}
\psset{linewidth=1.2pt,fillstyle=none}
\psellipticarcn{->}(0,0)(2,2){225.4286}{186}
\psellipticarcn{->}(0,0)(2,2){174}{134.5714}
\psellipticarcn{->}(0,0)(2,2){122.5714}{83.1429}
\psellipticarcn{->}(-3.247,-1.5637)(2,2){45.4286}{6}
\psellipticarcn{->}(-3.247,1.5637)(2,2){354}{314.5714}
\psellipticarcn{->}(-0.8019,3.5135)(2,2){302.5714}{263.1429}
\psset{linewidth=0.4pt}
\rput(-2.0722,-0.9979){\small $y_{-1}$}
\rput(-2.0722,0.9979){\small $y_0$}
\rput(-0.5118,2.2423){\small $y_1$}
\rput(-0.2893,1.2674){\small $x_1$}
\rput(-1.1713,0.564){\small $x_0$}
\rput(-1.1713,-0.564){\small $x_{-1}$}
\end{pspicture}%
}\end{centerline}

\vskip-0.8cm

\begin{centerline}{{\bf Fig. 1.} The quiver $\Qov_l$}\end{centerline}

\medskip

Now, if $d=(d_i)_{i \in \ZM/l\ZM}$ is a family of elements of $\NM$, 
we denote by $\Gb\Lb(d)$ the direct product
$$\Gb\Lb(d)=\prod_{i \in \ZM/l\ZM} \Gb\Lb_{d_i}(\CM),$$
by $\D\CM^\times$ the image of $\CM^\times$ in $\Gb\Lb(d)$ through the 
diagonal embedding $\D : \CM^\times \injto \Gb\Lb(d)$ and 
we set
$$\Pb\Gb\Lb(d)=\Gb\Lb(d)/\D\CM^\times.$$
The group $\Pb\Gb\Lb(d)$ acts on the variety $\Rep(\Qov_l,d)$ of representations 
of $\Qov_l$ in the family of vector spaces $(\CM^{d_i})_{i \in \ZM/l\ZM}$. The orbits 
are the isomorphism classes of representations of $\Qov_l$ of dimension vector $d$. 
We denote by 
$$\fonction{\mu_d}{\Rep(\Qov_l,d)}{\bigoplus_{i \in \ZM/l\ZM} 
\Mat_{d_i}(\CM)}{(X_i,Y_i)_{i \in \ZM/l\ZM}}{(X_iY_i - Y_{i-1}X_{i-1})_{i \in \ZM/l\ZM}}$$
the corresponding {\it moment map}. Finally, if $\th=(\th_i)_{i \in \ZM/l\ZM}$ is a family of complex numbers, 
we denote by $\Irm_\th(d)$ the family $(\th_i \Id_{\CM^{d_i}})_{i \in \ZM/l\ZM}$ and by $\OC_\th(d)$ 
the closed subvariety of $\Mat_{d_0}(\CM)$ consisting of matrices of rank $\le 1$ and trace $-\sum_{i\in \ZM/l\ZM} \th_i d_i$. 
Finally, we set
$$\YC_\th(d)=\mu_d^{-1}(\Irm_\th(d) + \OC_\th(d))\qquad\text{and}\qquad
\XC_\th(d)=\YC_\th(d)/\!\!/\Pb\Gb\Lb(d).$$
We will usually denote an element $(X_i,Y_i)_{i \in \ZM/l\ZM}$ of $\YC_\th(d)$ by $(X,Y)$, where $X=(X_i)_{i \in \ZM/l\ZM}$ 
and $Y=(Y_i)_{i \in \ZM/l\ZM}$. 
Note that $\YC_\th(d)$ is endowed with a $\CM^\times$-action: if $\xi \in \CM^\times$, we set 
$$\xi \cdot (X,Y)=(\xi^{-1}X,\xi Y).$$
This action commutes with the action of $\Gb\Lb(d)$ and the moment map is constant on 
$\CM^\times$-orbits, so it induces a $\CM^\times$-action on $\XC_\th(d)$. 

\bigskip

\begin{rema}\label{rem:dim-negative}
We extend the definition of $\XC_\th(d)$ to the case where $d \in \ZM^{\ZM/l\ZM}$ 
by the convention that $\XC_\th(d)=\vide$ whenever at least one of the $d_i$'s is negative.\finl
\end{rema}

\def\aff{\mathrm{aff}}

\bigskip

\subsection{Action of the affine Weyl group}\label{sub:action-weyl}
If $l \ge 2$, let $W_l^\aff$ denote the affine Weyl group of type $\Ati_{l-1}$. It is the 
Coxeter group with associated Coxeter system $(W_l^\aff,S_l^\aff)$, where 
$S_l^\aff=\{s_i~|~i \in \ZM/l\ZM\}$ and the Coxeter graph is given by

\begin{centerline}{
\psset{dimen=middle,xunit=1cm, yunit=1cm}
\begin{pspicture}(-3,-3)(3,3)%
\psset{linecolor=[rgb]{0,0,0},linewidth=0.4pt,linejoin=1}
\psline(0.445,1.9499)(-1.247,1.5637)(-2,-0)(-1.247,-1.5637)
\psset{linestyle=dashed,dash=2.5pt 2pt}
\psellipticarcn(0,0)(2,2){77.1429}{231.4286}
\psset{linestyle=solid,fillstyle=solid,fillcolor=[rgb]{1,1,1}}
\rput{90}(-2,0){\psellipse(0,0)(0.1,0.1)}
\rput{90}(-1.247,-1.5637){\psellipse(0,0)(0.1,0.1)}
\rput{90}(-1.247,1.5637){\psellipse(0,0)(0.1,0.1)}
\rput{90}(0.445,1.9499){\psellipse(0,0)(0.1,0.1)}
\rput(-2.3,-0){\small $s_0$}
\rput(-1.434,1.7982){\small $s_1$}
\rput(0.5118,2.2423){\small $s_2$}
\rput(-1.434,-1.7982){\small $s_{-1}$}
\end{pspicture}%
}\end{centerline}

\vskip-0.8cm

\begin{centerline}{{\bf Fig. 2.} Coxeter graph of $(W_l^\aff,S_l^\aff)$}\end{centerline}

\medskip


We extend this notation to the case where $l=1$ by setting $W_1^\aff=1$.

Consider the Lie algebra $\gG_l=\mathfrak{sl}_l(\CM)$ and its affine version  
$\widehat{\gG}_l=\widehat{\mathfrak{sl}}_l(\CM)=\mathfrak{sl}_l(\CM)[t,t^{-1}]\oplus \CM \mathbf{1}\oplus \CM\partial$.
Let $\hG\subset \gG_l$ be the Cartan subalgebra formed by the diagonal 
matrices and set $\widehat{\hG}=\hG\oplus \CM \mathbf{1}\oplus \CM\partial$. 

The $\CM$-vector space $\widehat{\hG}^*$ has a basis $(\alpha_0,~ \alpha_1,\ldots,~\alpha_{l-1},\Lambda_0)$, 
where $\alpha_0,~ \alpha_1,\ldots,~\alpha_{l-1}$ are the simple roots of $\widehat{\gG}_l$ and $\Lambda_0$ 
is such that $\Lambda_0$ annihilates $\hG$ and $\partial$ and $\Lambda_0(\mathbf{1})=1$. Denote by $R^\aff_l$ 
and $R_l$ the affine and the non-affine root 
lattices respectively (i.e., $R_l^\aff$ is the $\ZM$-lattice generated by 
$\alpha_0,~ \alpha_1,\ldots,~\alpha_{l-1}$ and $R_l$ is the sublattice generated 
by $\alpha_1,\ldots,~\alpha_{l-1}$.)

Following~\cite{lusztig}, 
we define two actions of $W_l^\aff$: a non-linear one on $\ZM^{\ZM/l\ZM}$, 
and a linear one on $\CM^{\ZM/l\ZM}$. If $l=1$, there is nothing to define so we may assume that 
$l \ge 2$. If $d=(d_i)_{i \in \ZM/l\ZM} \in \ZM^{\ZM/l\ZM}$ and if 
$j \in \ZM/l\ZM$, we set $s_j(d)=(d_i')_{i \in \ZM/l\ZM}$, where 
$$d_i'=
\begin{cases}
d_i & \text{if $i \neq j$,}\\
\d_{j0} + d_{i+1}+d_{i-1}-d_i & \text{if $i=j$.}\\
\end{cases}
$$

\bigskip
\begin{rema}
\label{rem:twisted_action}
We can identify $\ZM^{\ZM/l\ZM}$ with the root lattice $R_l^\aff$ by 
$$
d\mapsto \sum_{i\in \ZM/l\ZM}d_i\alpha_i.
$$
Beware, the action considered here is not the usual action of $W_l^\aff$ on the root lattice. 
When we have $w(d)=d'$ with respect to the action defined above, this corresponds to $w(\Lambda_0-d)=\Lambda_0-d'$ for 
the usual action of $W_l^\aff$ on $\widehat \hG^*$.\finl
\end{rema}

If $\th=(\th_i)_{i \in \ZM/l\ZM} \in \CM^{\ZM/l\ZM}$, we set $s_j(\th)=(\th_i')_{i \in \ZM/l\ZM}$, where
$$\th_i'=
\begin{cases}
\th_i & \text{if $i \not\in \{j-1,j,j+1\}$,}\\
\th_j+\th_i & \text{if $i \in \{j-1,j+1\}$,}\\
-\th_i & \text{if $i=j$.}
\end{cases}
$$
It is readily seen that these definitions on generators extend to an action 
of the whole group $W_l^\aff$. 
We also define a pairing $\ZM^{\ZM/l\ZM} \times \CM^{\ZM/l\ZM} \to \CM$, $(d,\th) \mapsto d \cdot \th$, 
where 
$$d \cdot \th = \sum_{i \in \ZM/l\ZM} d_i \th_i.$$
Then
\equat\label{eq:pairing}
s_j(d) \cdot s_j(\th) = (d \cdot \th) - \d_{j0} \th_0.
\endequat
It is proved in~\cite[Corollary~3.6]{lusztig} that
\equat\label{eq:iso-lusztig}
\XC_{s_j(\th)}(s_j(d)) \simeq \XC_\th(d)\qquad \mbox{ if }\theta_j\ne 0.
\endequat
Note that this isomorphism takes into account the convention of Remark~\ref{rem:dim-negative}. 



The isomorphism above motivates to consider the following equivalence relation on the set $\ZM^{\ZM/l\ZM} \times \CM^{\ZM/l\ZM}$. Let $\sim$ be the transitive closure of 
$$
(d,\theta)\sim (s_i(d),s_i(\theta)), \qquad \theta_i\ne 0.
$$
The isomorphism \eqref{eq:iso-lusztig} implies that if $(d,\theta)\sim(d',\theta')$, then we have an isomorphism of algebraic varieties $\XC_\theta(d)\simeq \XC_{\theta'}(d')$.

Recall that the affine Weyl group has another presentation. We have $W_l^\aff=W_l\ltimes R_l$, where $W_l=\mathfrak{S}_l$ 
is the non-affine Weyl group, that is, the subgroup generated by $s_1$,\dots, $s_{l-1}$. 
For each $\alpha\in R_l$, denote by $t_\alpha$ the image of $\alpha$ in $W^\aff_l$. 
Each element of $W_l^\aff$ can be written in a unique way in the form $w\cdot t_\alpha$, where $w\in W_l$ and $\alpha\in R_l$. 
Let $\d_l$ denote the constant family $\d_l=(1)_{i \in \ZM/l\ZM} \in \ZM^{\ZM/l\ZM}$. 

\bigskip

Recall also from~\cite[\S{11}]{EG} the following result, which follows from 
the fact that $\XC_\th(n\d_l)$ is isomorphic to some Calogero-Moser space 
(we will use this fact later, and make this statement more precise):

\bigskip

\begin{lem}\label{lem:dim}
If $n \ge 0$, then $\XC_\th(n\d_l)$ is normal and irreducible of dimension $2n$.
\end{lem}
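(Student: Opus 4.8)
The plan is to reduce the three assertions to standard properties of Calogero--Moser spaces, using the Etingof--Ginzburg identification recalled just before the statement. By construction $\XC_\th(d)$ is the affine GIT quotient $\YC_\th(d)/\!\!/\Pb\Gb\Lb(d)$ of a fibre of the moment map $\mu_d$, and for the dimension vector $d=n\d_l$ the datum $\Irm_\th(d)+\OC_\th(d)$ encodes a one-dimensional framing at the vertex $0$: the rank-$\le 1$ matrices in $\Mat_{n}(\CM)$ appearing in $\OC_\th(d)$ are exactly the composites $\CM^{n}\to\CM\to\CM^{n}$ of the two framing maps, and the trace condition matches the framed moment-map equation. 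Thus $\XC_\th(n\d_l)$ is the Nakajima quiver variety attached to the affine diagram of type $\Ati_{l-1}$ with dimension vector $n\d_l$ and framing $\epsilon_0=(\d_{i0})_{i\in\ZM/l\ZM}$, realized as an affine quotient at the moment level $\th$.

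The first step is then to quote~\cite[\S 11]{EG}: this quiver variety is isomorphic to the Calogero--Moser space $\ZCB_{c}(\CM^n,W)$ of the wreath product $W=G(l,1,n)=\mub_l\wr\SG_n$ acting on $V=\CM^n$, for a parameter $c=c(\th)$ depending affinely on $\th$. Because $\XC_\th$ is the affine quotient at moment level $\th$ (and not a quotient for a nonzero stability parameter), this identification is available for every $\th$, the level playing exactly the role of the Cherednik parameter $c$.

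Granting this, I would conclude using the structure of $\Zb_c$ recalled in Subsection~\ref{sub:Cher}. By~\cite[Lemma~3.5]{EG} the centre $\Zb_c$ is an integral domain, so $\ZCB_c=\Spec\Zb_c$ is irreducible; it is integrally closed, so $\ZCB_c$ is normal; and it is a free module of rank $|W|$ over $\Pb=\CM[V]^W\otimes\CM[V^*]^W$, which has Krull dimension $2\dim_\CM V=2n$, whence $\dim\ZCB_c=2n$. The convention of Remark~\ref{rem:dim-negative} is vacuous here since every entry of $n\d_l$ is $\ge 0$. As a cross-check on the dimension one may argue entirely on the quiver side: the standard dimension count for the Nakajima variety with dimension vector $n\d_l$ and one-dimensional framing at vertex $0$ yields $2n$, since the framing contributes $2n$ and the Euler form $\langle n\d_l,n\d_l\rangle$ vanishes because $\d_l$ is the isotropic imaginary root of $\Ati_{l-1}$.

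The main obstacle is the imported isomorphism itself: the lemma rests on the precise matching between $\XC_\th(n\d_l)$ and a Calogero--Moser space, which is deferred in the paper. Two points deserve care. First, one must check that the identification covers all values of $\th$, including the degenerate cases where some $\th_i$ vanish; for $\th=0$ one verifies directly that $\XC_0(n\d_l)\simeq(V\oplus V^*)/W=\ZCB_0(\CM^n,W)$, while for other $\th$ the reflection isomorphisms~\eqref{eq:iso-lusztig} allow one to move $(n\d_l,\th)$ within its $\sim$-class to a convenient representative. Second, along such reductions the dimension vector is not preserved---one has $s_0(n\d_l)=n\d_l+\epsilon_0$ while $s_j(n\d_l)=n\d_l$ for $j\ne 0$---so the bookkeeping of Remark~\ref{rem:twisted_action} must be respected; since these moves only relate isomorphic varieties, irreducibility, normality and the dimension are transported unchanged.
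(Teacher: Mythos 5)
Your proposal is correct and takes essentially the same route as the paper: the paper justifies this lemma exactly by the isomorphism of $\XC_\th(n\d_l)$ with a Calogero--Moser space for $G(l,1,n)$ from~\cite[\S{11}]{EG} (made precise later in Theorem~\ref{theo:quiver-cm}), after which irreducibility, normality and the dimension $2n$ follow from the properties of the centre $\Zb_c$ recalled in~\S\ref{sub:Cher} (integral domain, integrally closed, free of rank $|W|$ over $\Pb$). Your extra caution about degenerate values of $\th$ is harmless but not needed, since the affine-linear correspondence $c \leftrightarrow \th$ is a bijection and the identification holds for all parameters.
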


\bigskip
\begin{defi}
\label{def:smooth-pair}
We say that the pair $(d,\theta)$ is smooth if it is equivalent to a pair of the form $(n\delta_l,\theta')$ such that 
$n\geqslant 0$ and $\XC_{\theta'}(n\d_l)$ is smooth. In particular, in this case, the variety $\XC_{\theta}(d)$ is 
smooth and non-empty.
\end{defi}

\bigskip

In the following lemma we identify $\ZM^{\ZM/l\ZM}$ with $R_l^\aff$.

\bigskip

\begin{lem}
\label{lem:affWeyl_translation-roots}
Assume $\alpha\in R_l$ and $d\in \ZM^{\ZM/l\ZM}$. Then we have $t_\alpha(d) \equiv d-\alpha \mod \ZM\delta_l$.
\end{lem}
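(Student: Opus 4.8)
The plan is to transport the question to the usual linear action of $W_l^\aff$ on $\widehat\hG{}^*$ via Remark~\ref{rem:twisted_action}, where the behaviour of translations is classical, and to carry out the computation modulo the null root so that the only surviving term is the translation part.

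First I would record the precise dictionary supplied by Remark~\ref{rem:twisted_action}: writing $\bar d=\sum_{i\in\ZM/l\ZM}d_i\alpha_i\in R_l^\aff$ for the image of $d$, the twisted action is characterized by $\overline{w(d)}=\Lambda_0-w(\Lambda_0-\bar d)$, where on the right $w(\cdot)$ denotes the usual action of $W_l^\aff$ on $\widehat\hG{}^*$. Since $\overline{\delta_l}=\alpha_0+\alpha_1+\cdots+\alpha_{l-1}=\delta$ is the null root, it is fixed by all of $W_l^\aff$; hence reduction modulo $\CM\delta$ is compatible with the action, and it suffices to prove the congruence in $R_l^\aff/\CM\delta$ and then recover the integral statement at the end. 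I would also note that $\Lambda_0-\bar d$ has level $1$, i.e. $(\Lambda_0-\bar d)(\mathbf 1)=1$, since $\Lambda_0(\mathbf 1)=1$ and every root has level $0$.

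The key input is the classical description of translations. In the presentation $W_l^\aff=W_l\ltimes R_l$ the element $t_\alpha$ acts on $\widehat\hG{}^*$ by Kac's formula $t_\alpha(\lambda)=\lambda+\lambda(\mathbf 1)\,\alpha-\bigl((\lambda\mid\alpha)+\frac{1}{2}(\alpha\mid\alpha)\,\lambda(\mathbf 1)\bigr)\delta$; reducing modulo $\CM\delta$ and specializing to the level-$1$ weight $\lambda=\Lambda_0-\bar d$ this reads simply $t_\alpha(\Lambda_0-\bar d)\equiv\Lambda_0-\bar d+\alpha\pmod{\CM\delta}$, which is nothing but the defining feature that $R_l$ acts by genuine translations on the level-$1$ part. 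Substituting into the dictionary gives $\overline{t_\alpha(d)}=\Lambda_0-t_\alpha(\Lambda_0-\bar d)\equiv\bar d-\alpha\pmod{\CM\delta}$, that is $t_\alpha(d)\equiv d-\alpha$ modulo $\CM\delta$. Finally I would upgrade this from $\CM\delta$ to $\ZM\delta_l$: both $\overline{t_\alpha(d)}$ and $\bar d-\alpha$ lie in the lattice $R_l^\aff$ (the twisted action preserves $\ZM^{\ZM/l\ZM}$), so their difference lies in $R_l^\aff\cap\CM\delta=\ZM\delta$ (as $\delta$ is primitive), whence $t_\alpha(d)-(d-\alpha)\in\ZM\delta_l$, as required.

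The only genuine obstacle is bookkeeping with conventions: one must check that the sign built into the identification $d\mapsto\Lambda_0-\bar d$ of Remark~\ref{rem:twisted_action}, combined with the convention that $t_\alpha$ is translation by $\alpha$ (rather than $-\alpha$) on level-$1$ weights, really produces $-\alpha$ and not $+\alpha$; a single sign slip here would flip the statement, so I would pin down both conventions before writing the computation. As an alternative that sidesteps Kac's formula entirely, one can instead verify the claim for the generators $\alpha=\alpha_1,\dots,\alpha_{l-1}$ of $R_l$ and then propagate it using $t_{\alpha+\beta}=t_\alpha t_\beta$ together with the fact — also a direct consequence of Remark~\ref{rem:twisted_action} and the $W_l^\aff$-invariance of $\delta$ — that the twisted action satisfies $w(d+\delta_l)=w(d)+\delta_l$, so that it descends to $\ZM^{\ZM/l\ZM}/\ZM\delta_l$ and the successive congruences compose cleanly.
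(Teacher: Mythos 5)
Your proof is correct and is essentially the paper's own argument made explicit: the paper proves this lemma with a one-line citation of Kac's translation formula (6.5.2) combined with the dictionary of Remark~\ref{rem:twisted_action}, which are exactly the two ingredients you assemble. Your additional care with the level-$1$ reduction modulo $\CM\delta$, the integrality upgrade $R_l^\aff\cap\CM\delta=\ZM\delta_l$, and the sign conventions simply fills in what the citation leaves implicit.
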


\bigskip
\begin{proof}
This statement is a partial case of \cite[(6.5.2)]{Kac} (see also Remark \ref{rem:twisted_action}).
\end{proof}

\bigskip

\begin{lem}\label{lem:orbites}
For each $d\in \ZM^{\ZM/l\ZM}$, there exists a unique $n \in \ZM$ such that $d$ and $n\d_l$ are 
in the same $W_l^\aff$-orbit.
\end{lem}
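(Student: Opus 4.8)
The plan is to reduce the statement to a computation with the standard invariant bilinear form on the affine weight lattice. The case $l=1$ is trivial, since then $W_1^\aff=1$, $\ZM^{\ZM/l\ZM}=\ZM$ and $n\d_l=(n)$, so $n=d_0$ is the unique solution; I therefore assume $l\ge 2$. Following Remark~\ref{rem:twisted_action}, I would first transport the problem into $\widehat\hG^*$ through the identification $d=(d_i)_i\leftrightarrow \sum_{i\in\ZM/l\ZM}d_i\alpha_i$, under which $\d_l$ corresponds to the imaginary root $\delta=\sum_i\alpha_i$ and the twisted action of $W_l^\aff$ on $d$ becomes the usual action on $\Lambda_0-\sum_i d_i\alpha_i$. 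Thus $d$ and $n\d_l$ lie in the same (twisted) orbit if and only if $\Lambda_0-\sum_i d_i\alpha_i$ and $\Lambda_0-n\delta$ lie in the same orbit for the usual $W_l^\aff$-action, and existence and uniqueness can then be treated separately in this language.

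For existence, I would argue modulo $\ZM\d_l$ using Lemma~\ref{lem:affWeyl_translation-roots}. The quotient $R_l^\aff/\ZM\d_l$ is free abelian of rank $l-1$: indeed $R_l^\aff=\ZM\alpha_0\oplus R_l$ and $\d_l=\delta=\alpha_0+\sum_{i=1}^{l-1}\alpha_i$, so the composite $R_l\hookrightarrow R_l^\aff\twoheadrightarrow R_l^\aff/\ZM\d_l$ is an isomorphism. By Lemma~\ref{lem:affWeyl_translation-roots}, for $\alpha\in R_l$ the translation $t_\alpha$ acts on this quotient by $\bar d\mapsto \bar d-\bar\alpha$; since $\alpha\mapsto\bar\alpha$ is onto, I can choose $\alpha\in R_l$ with $\bar\alpha=\bar d$, whence $t_\alpha(d)\equiv 0 \pmod{\ZM\d_l}$. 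As $t_\alpha(d)\in\ZM^{\ZM/l\ZM}$ then has all its coordinates equal, we get $t_\alpha(d)=n\d_l$ for some $n\in\ZM$, which produces the required $n$.

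For uniqueness, I would use the standard normalized $W_l^\aff$-invariant symmetric bilinear form $(\,\cdot\,|\,\cdot\,)$ on $\widehat\hG^*$ (see~\cite{Kac}), which satisfies $(\alpha_i|\Lambda_0)=\d_{i0}$, $(\Lambda_0|\Lambda_0)=0$ and $(\delta|\delta)=0$, hence $(\delta|\Lambda_0)=1$. Because the form is $W_l^\aff$-invariant and $d\mapsto\Lambda_0-\sum_i d_i\alpha_i$ intertwines the two actions, the integer $q(d):=(\Lambda_0-\sum_i d_i\alpha_i\mid\Lambda_0-\sum_i d_i\alpha_i)$ is constant on orbits. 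A direct evaluation gives $q(n\d_l)=(\Lambda_0-n\delta\mid\Lambda_0-n\delta)=-2n$, so if $n\d_l$ and $n'\d_l$ lie in the same orbit then $-2n=-2n'$ and $n=n'$. Together with the previous paragraph this settles both existence and uniqueness.

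The main obstacle, and the point needing the most care, is uniqueness: one must exhibit a genuine orbit invariant separating the lines $n\d_l$, and the expression one obtains directly in the coordinates $d_i$ (namely $q(d)=\sum_i(d_i-d_{i+1})^2-2d_0$) is not visibly invariant under the twisted generator $s_0$. The clean way around this is precisely to recognise $q$ as the restriction of the invariant affine form through Remark~\ref{rem:twisted_action}, so that invariance becomes automatic; checking the dictionary between the twisted action and the usual action is then the only delicate step.
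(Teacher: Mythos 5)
Your proof is correct, and while your existence argument is essentially the paper's, your uniqueness argument takes a genuinely different route. For existence, both you and the paper invoke Lemma~\ref{lem:affWeyl_translation-roots} to translate $d$ by an element $\alpha\in R_l$ congruent to $d$ modulo $\ZM\delta_l$; the paper just writes the choice explicitly as $\alpha=d-d_0\delta_l$, which is the same thing as your surjectivity argument for $R_l\to R_l^\aff/\ZM\delta_l$. For uniqueness, the paper stays entirely inside the lattice combinatorics: since $n_1\delta_l$ is fixed by the finite Weyl group $W_l$, its $W_l^\aff$-orbit equals its $R_l$-orbit, and then Lemma~\ref{lem:affWeyl_translation-roots} together with $R_l\cap\ZM\delta_l=0$ forces the translating root to be $0$, hence $n_1=n_2$. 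You instead exhibit a numerical orbit invariant: the normalized invariant form of \cite{Kac} evaluated at $\Lambda_0-d$, which is legitimate precisely because Remark~\ref{rem:twisted_action} identifies the twisted action on $d$ with the usual action on $\Lambda_0-d$, and which evaluates to $-2n$ at $n\delta_l$ via $(\Lambda_0|\Lambda_0)=(\delta|\delta)=0$ and $(\Lambda_0|\delta)=1$. The paper's argument is the more elementary one --- it needs nothing beyond the translation formula it has already proved --- whereas yours imports the bilinear form and the standard facts about its $W_l^\aff$-invariance. What your argument buys in exchange is a closed formula for the integer $n$ attached to any $d$ (for $l\ge 2$), namely $n=d_0-\frac{1}{2}\sum_{i\in\ZM/l\ZM}(d_i-d_{i+1})^2$, together with a conceptual explanation (constancy of a quadratic invariant on level-one orbits); and your concluding remark correctly isolates the one delicate point, namely that this invariance is invisible in the raw coordinates $d_i$ and only becomes automatic after passing through the dictionary of Remark~\ref{rem:twisted_action}.
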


\bigskip
\begin{proof}
Let us prove the existence of $n$. Set $\alpha=d-d_0\delta_l$. Then $\alpha$ is clearly an element of $R_l$. 
Then by Lemma \ref{lem:affWeyl_translation-roots}, the element $t_\alpha(d)$ is of the form $n\delta_l$.

Let us prove the uniqueness of $n$. Assume that $n_1\d_l$ and $n_2\d_l$ are in the 
same $W_l^\aff$-orbit. Since $n_1\d_l$ is $W_l$-stable, the $W_l^\aff$-orbit of $n_1\d_l$ 
coincides with the $R_l$-orbit of $n_1\d_l$. So there exists $\a \in R_l$ such that $t_\alpha(n_1\d_l)=n_2\d_l$. 
By Lemma \ref{lem:affWeyl_translation-roots}, this forces $\alpha=0$ and $n_1=n_2$.
\end{proof}

\bigskip

Consider the $\ZM$-linear map
$$
R^\aff_l\to \CM^{\ZM/l\ZM}, \quad d\mapsto \overline d,
$$
given by
$$
(\overline{\alpha_r})_i=2\delta_{i,r}-\delta_{i,r+1}-\delta_{i,r-1}.
$$
The kernel of this map is $\ZM\delta_l$. Set 
$$
\Sigma(\th)=\sum_{i \in \ZM/l\ZM} \th_i.
$$

\bigskip
\begin{lem}
\label{lem:affWeyl_translation-theta}
For each $\alpha\in R_l$ and $\theta\in \CM^{\ZM/l\ZM}$, we have $t_\alpha(\theta)=\theta+\Sigma(\th)\overline\alpha$.
\end{lem}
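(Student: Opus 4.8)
The plan is to recognise the linear action of $W_l^\aff$ on $\CM^{\ZM/l\ZM}$ as the genuine contragredient action of the affine Weyl group on $\widehat\hG^*$, read modulo the imaginary root, and then to quote Kac's translation formula, exactly as was done for Lemma~\ref{lem:affWeyl_translation-roots}. Concretely, I would identify $\CM^{\ZM/l\ZM}$ with $\widehat\hG^*/\CM\delta_l$, where $\delta_l=\sum_{i}\alpha_i\in\widehat\hG^*$ is the imaginary root (the image of the vector $\delta_l$ under $\ZM^{\ZM/l\ZM}\simeq R^\aff_l$), through
$$\mu + \CM\delta_l \;\longmapsto\; \theta=(\langle\mu,\alpha_i^\vee\rangle)_{i\in\ZM/l\ZM}.$$
This is well defined and bijective since $\langle\mu,\alpha_i^\vee\rangle=0$ for all $i$ exactly when $\mu\in\CM\delta_l$. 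The first point to check is that this identification intertwines the two actions: since $s_j$ acts by $s_j(\mu)=\mu-\langle\mu,\alpha_j^\vee\rangle\alpha_j$ and $\langle\alpha_j,\alpha_i^\vee\rangle=(\overline{\alpha_j})_i$, one gets $\langle s_j\mu,\alpha_i^\vee\rangle=\theta_i-\theta_j(\overline{\alpha_j})_i$, which is precisely the combinatorial rule defining $s_j(\theta)$. As a consistency check, this is the \emph{untwisted} action, in contrast with the twisted action on $\ZM^{\ZM/l\ZM}$ of Remark~\ref{rem:twisted_action}; pairing the two and using that $s_j$ preserves the invariant form recovers the correction term of \eqref{eq:pairing}.

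The next step is to identify the scalar $\Sigma(\th)$ with the level $\langle\mu,\mathbf{1}\rangle$. Because $W$ is of type $\Ati_{l-1}$, all the marks equal $1$, so the canonical central element is $\mathbf{1}=\sum_{i\in\ZM/l\ZM}\alpha_i^\vee$. Hence, for $\mu$ with coordinates $\theta$,
$$\langle\mu,\mathbf{1}\rangle=\sum_{i\in\ZM/l\ZM}\langle\mu,\alpha_i^\vee\rangle=\Sigma(\th),$$
and this is well defined modulo $\delta_l$ since $\langle\delta_l,\mathbf{1}\rangle=0$ (the simple roots are of level $0$).

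Finally I would invoke Kac's translation formula~\cite[(6.5.2)]{Kac}, which for $\alpha\in R_l$ reads, modulo the imaginary root,
$$t_\alpha(\mu)\equiv \mu+\langle\mu,\mathbf{1}\rangle\,\alpha \mod \CM\delta_l.$$
Passing to $\theta$-coordinates, that is, pairing with each $\alpha_i^\vee$ (which annihilates $\delta_l$), the $i$-th coordinate of $t_\alpha(\theta)$ is
$$\langle t_\alpha\mu,\alpha_i^\vee\rangle=\langle\mu,\alpha_i^\vee\rangle+\langle\mu,\mathbf{1}\rangle\langle\alpha,\alpha_i^\vee\rangle=\theta_i+\Sigma(\th)(\overline\alpha)_i,$$
where the last equality uses $\langle\alpha,\alpha_i^\vee\rangle=(\overline\alpha)_i$, which is exactly the definition of the map $\alpha\mapsto\overline\alpha$. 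This is the claimed identity $t_\alpha(\theta)=\theta+\Sigma(\th)\overline\alpha$.

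The main obstacle lies entirely in the bookkeeping of the first paragraph: one must be certain that the linear action on $\theta$ is the \emph{honest} contragredient action on $\widehat\hG^*/\CM\delta_l$ with $\theta_i=\langle\mu,\alpha_i^\vee\rangle$, and not a twisted variant like the one on the dimension vectors $d$; and one must recognise the prefactor $\Sigma(\th)$ as the level $\langle\mu,\mathbf{1}\rangle$, which is where the type $\Ati_{l-1}$ hypothesis (all marks equal to $1$) enters. Once this dictionary is fixed, the statement is just the reduction modulo $\delta_l$ of the standard translation formula, and no reduction to simple roots or explicit reduced words for $t_\alpha$ is needed.
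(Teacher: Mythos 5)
Your proof is correct and takes essentially the same route as the paper: the paper's own proof consists exactly of the observation that the combinatorial action on $\theta$ coincides with the usual $W_l^\aff$-action on (the relevant quotient/dual of) $\widehat\hG^*$, followed by the citation of Kac's translation formula (6.5.2). Your explicit identification $\theta_i=\langle\mu,\alpha_i^\vee\rangle$ of $\CM^{\ZM/l\ZM}$ with $\widehat\hG^*/\CM\delta_l$, the generator-by-generator intertwining check, and the recognition of $\Sigma(\theta)$ as the level $\langle\mu,\mathbf{1}\rangle$ (using that all comarks of type $\Ati_{l-1}$ equal $1$) just supply the details that the paper leaves implicit.
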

\bigskip
\begin{proof}
The $W^\aff_l$-action on $\CM^{\ZM/l\ZM}$ defined above coincides with the (usual) action of $W^\aff_l$ on the dual of the span of $\alpha_0,\alpha_1,\ldots,\alpha_{l-1}$ in $\widehat\hG^*$. The statement follows from \cite[(6.5.2)]{Kac}. 
\end{proof}

\bigskip

We denote by $(\ZM^{\ZM/l\ZM})_+$ the set of $d \in \ZM^{\ZM/l\ZM}$ which belong to the orbit of some $n\d_l$, with $n \ge 0$. This set has a more precise combinatorial description. For example, it will follow from Proposition \ref{prop:core=orbit0} 
that an element $d\in \ZM^{\ZM/l\ZM}$ is in $(\ZM^{\ZM/l\ZM})_+$ if and only if $d$ is a residue of some Young diagram. In particular, the set $(\ZM^{\ZM/l\ZM})_+$ is contained in $\NM^{\ZM/l\ZM}$. 

\bigskip

\begin{lem}\label{lem:descrX-if-simple}
Assume that $d \in \ZM^{\ZM/l\ZM}$ is such that there exists a simple representation in $\YC_\theta(d)$. Then 

\begin{itemize}
\item[(a)] the variety $\XC_\th(d)$ is normal and irreducible,

\item[(b)] we have $d\in (\ZM^{\ZM/l\ZM})_+$.
\end{itemize}
\end{lem}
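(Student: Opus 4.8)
The plan is to reinterpret $\YC_\th(d)$ as a moduli space of representations of a framed quiver, deduce part~(a) from Crawley--Boevey's results on simple representations of deformed preprojective algebras, and prove part~(b) by reducing $(d,\th)$ along the isomorphisms~\eqref{eq:iso-lusztig} to a diagonal dimension vector $n\d_l$, where Lemma~\ref{lem:dim} and the emptiness convention of Remark~\ref{rem:dim-negative} apply.

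First I would make the framing explicit. The rank~$\le 1$ condition at the vertex $0$ means that the $0$-component of $\mu_d(X,Y)-\th_0\Id$ can be written as $vw$ with $v\in\CM^{d_0}$ and $w\in(\CM^{d_0})^*$; recording $(v,w)$ turns a point of $\YC_\th(d)$ into a representation of the deformed preprojective algebra of the quiver $\Qov_l$ enlarged by one vertex $\infty$ joined to $0$, with dimension vector $(d,1)$ and parameter $\lambda=(\th,\th_\infty)$, where $\th_\infty=-\sum_i\th_i d_i$ is chosen so that $\lambda\cdot(d,1)=0$. Under this dictionary a simple representation in $\YC_\th(d)$ is a simple representation of dimension $(d,1)$, and the reflections $s_j$ of Section~\ref{sub:action-weyl} are exactly the Weyl reflections of the framed quiver acting on the $\ZM/l\ZM$-coordinates (the term $\d_{j0}$ encoding the extra neighbour $\infty$ at the vertex $0$).

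Granting this, part~(a) is immediate: by Crawley--Boevey's theorem~\cite{CB}, the existence of a simple representation of a given dimension vector implies that the corresponding moment-map fibre is reduced and irreducible and that its affine GIT quotient --- here $\XC_\th(d)$ --- is normal and irreducible. For part~(b), by Lemma~\ref{lem:orbites} there is a unique $n\in\ZM$ with $d\sim_{W_l^\aff}n\d_l$, and I must show $n\ge 0$. The Crawley--Boevey--Holland reflection functors realise the isomorphism~\eqref{eq:iso-lusztig} whenever $\th_j\ne 0$ and set up a bijection on simple representations; thus, as long as the reduction to $n\d_l$ can be performed using only reflections at vertices with nonzero parameter, the reduced variety $\XC_{\th'}(n\d_l)$ still carries a simple representation, hence is non-empty. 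By Remark~\ref{rem:dim-negative} this is impossible when some coordinate of $n\d_l$ is negative, i.e.\ when $n<0$; therefore $n\ge 0$ and $d\in(\ZM^{\ZM/l\ZM})_+$.

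The main obstacle is the proviso in the previous step: the reducing path supplied by Lemma~\ref{lem:orbites} may pass through a vertex $j$ with $\th_j=0$, where neither~\eqref{eq:iso-lusztig} nor the reflection functor is available. That simplicity is genuinely needed here is shown by the $W_2^\aff$-orbit of $-\d_2$ in type $\Ati_1$, which contains the non-negative vector $(3,1)$: a naive positivity argument cannot suffice. To handle the walls $\th_j=0$ I would appeal to Crawley--Boevey's description of the set $\Sigma_\lambda$ of dimension vectors admitting a simple representation, whose defining condition $p(\beta)>\sum_k p(\beta_k)$ --- the strict superadditivity of $p(\beta)=1-\tfrac12\langle\beta,\beta\rangle$ over decompositions of $\beta$ into $\lambda$-orthogonal positive roots --- precisely excludes the dimension vectors reachable only through such walls, forcing the canonical representative to be $n\d_l$ with $n\ge 0$. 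Turning this superadditivity condition into the required statement about the $W_l^\aff$-orbit is the technical core of the argument.
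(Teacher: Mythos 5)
Your strategy for part (b) is essentially the paper's: reduce $(d,\th)$ to $(n\d_l,\th')$ through the isomorphisms~\eqref{eq:iso-lusztig} and force $n\ge 0$ from non-emptiness via Remark~\ref{rem:dim-negative}. Your framed-quiver dictionary is correct, and the example of $(3,1)$ in the $W_2^\aff$-orbit of $-\d_2$ in type $\Ati_1$ rightly shows that simplicity must enter somewhere. But at exactly the decisive point your proof stops. You observe that the reducing sequence of reflections supplied by Lemma~\ref{lem:orbites} may pass through a vertex $j$ with $\th_j=0$, propose to rule this out using Crawley--Boevey's superadditivity description of the set $\Sigma_\lambda$, and then declare the derivation ``the technical core of the argument'' without performing it. That unperformed derivation is the entire content of the lemma: without it you have established neither (b) nor, along your reflecting path, the isomorphism $\XC_\th(d)\simeq\XC_{\th'}(n\d_l)$. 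The paper closes precisely this gap with a single citation: \cite[Lemma~7.2]{CB} asserts that when the (framed) dimension vector lies in $\Sigma_\lambda$ --- i.e., when a simple representation exists --- the reduction can be carried out by \emph{admissible} reflections, each applied at a vertex where the current parameter coordinate is nonzero, so that \eqref{eq:iso-lusztig} applies at every step. So the missing step is not something you need to re-extract from the superadditivity condition $p(\beta)>\sum_k p(\beta_k)$; it is an existing lemma of Crawley--Boevey, and invoking it is what turns your sketch into a proof.

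A secondary inaccuracy concerns part (a). The results of \cite{CB} that you invoke (existence of a simple representation implies the moment-map fibre is a reduced and irreducible complete intersection) give that $\XC_\th(d)$ is reduced and irreducible, but \emph{normality} of the fibre, hence of its GIT quotient, is not in that paper; it is proved in Crawley--Boevey's later work on normality of Marsden--Weinstein reductions, which this paper does not cite. The paper sidesteps the issue by deducing (a) from the reduction itself: once $\XC_\th(d)\simeq\XC_{\th'}(n\d_l)$ is known, normality and irreducibility follow from Lemma~\ref{lem:dim}, i.e., from the identification with a Calogero--Moser space whose coordinate ring is the integrally closed domain $\Zb_{c'}$. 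The cleanest repair of your write-up is therefore to establish the admissible reflecting path first (via \cite[Lemma~7.2]{CB}) and then read off both (a) and (b) from Lemma~\ref{lem:dim} and Remark~\ref{rem:dim-negative}, as the paper does.
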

\begin{proof}
Let $n\in \ZM$ and $w\in W^\aff_l$ be such that $n\d_l=w(d)$. Let us show that \eqref{eq:iso-lusztig} implies an 
isomorphism $\XC_\th(d)\simeq \XC_{w(\th)}(n\d_l)$. (In particular, we must have $n\geqslant 0$.) The only 
difficulty that we have is caused by the condition $\th_i\ne 0$ in \eqref{eq:iso-lusztig}. 
But \cite[Lemma~7.2]{CB} implies that we can find a sequence of reflections such that $s_{i_1}s_{i_2}\dots s_{i_r}(d)=n\d_l$ and 
such that we can apply a sequence of isomorphisms \eqref{eq:iso-lusztig} with no danger to get $\th_{i_j}=0$ while applying $s_{i_j}$.
\end{proof}

\bigskip

\subsection{Smoothness}\label{sub:smooth}
The following result is proved in~\cite[Lemma~4.3 and its proof]{gordon quiver} (which is based 
on~\cite[Theorem~1.2]{lebruyn} and~\cite[Theorem~1.2]{CB}):

\bigskip

\begin{theo}\label{theo:smooth}
Let $n \ge 0$. Then the following are equivalent:
\begin{itemize}
\itemth{1} The variety $\XC_\th(n\d_l)$ is smooth.

\itemth{2} Every element of $\YC_\th(n\d_l)$ defines a simple representation 
of the quiver $\Qov_l$.

\itemth{3} The family $\th$ satisfies
$$\Sigma(\th) \prod_{\substack{1 \le i \le j \le l-1 \\ -(n-1) \le k \le n-1}} \bigl((\th_i + \th_{i+1} + \cdots + \th_j)+k\Sigma(\th)\bigr) 
\neq 0.$$
\end{itemize}
If these equivalent conditions hold, then $\Pb\Gb\Lb(n\d_l)$ acts freely on $\YC_\th(n\d_l)$, and so, as 
a set, $\XC_\th(n\d_l)$ identifies with $\Gb\Lb(n\d_l)$-orbits in $\YC_\th(n\d_l)$. 
\end{theo}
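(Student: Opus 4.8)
The plan is to establish the cycle $(2)\Rightarrow(1)\Rightarrow(2)$ together with the equivalence $(2)\Leftrightarrow(3)$, and to read off the freeness statement en route. The geometric backbone is Crawley-Boevey's dictionary, as used in \cite{CB} and \cite{lebruyn}: a point of $\YC_\th(n\d_l)$ is the same datum as a representation, over the moment-map value prescribed by $\th$, of the deformed preprojective algebra of the \emph{framed} cyclic quiver obtained by adjoining a one-dimensional vertex $\infty$ at vertex $0$. This one-dimensional framing encodes the rank-$\le 1$ correction $\OC_\th(d)$, and its trace is fixed precisely so that the total moment-map value is traceless. The word ``simple'' in $(2)$ is understood in this framed sense.

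First I would prove $(2)\Rightarrow(1)$ and the last sentence at once. If every element of $\YC_\th(n\d_l)$ is simple, then Schur's lemma shows that its stabiliser in $\Gb\Lb(n\d_l)$ is exactly the scalar subgroup $\D\CM^\times$, so $\Pb\Gb\Lb(n\d_l)$ acts freely. For a reductive group action, freeness forces the moment map for the $\Pb\Gb\Lb(n\d_l)$-action to be a submersion at each point of the fibre (the cokernel of its differential is dual to the Lie algebra of the point stabiliser, here trivial); hence $\YC_\th(n\d_l)$ is smooth and the orbit map $\YC_\th(n\d_l)\to\XC_\th(n\d_l)$ is a principal $\Pb\Gb\Lb(n\d_l)$-bundle. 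Thus $\XC_\th(n\d_l)$ is smooth, and set-theoretically it is the space of $\Gb\Lb(n\d_l)$-orbits, which is the final assertion.

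For $(1)\Rightarrow(2)$ I would argue contrapositively: if some representation in $\YC_\th(n\d_l)$ is not simple, then neither is its semisimplification, which labels a closed orbit, so there is a closed orbit corresponding to a decomposable (polystable, non-simple) representation $M$. Near such an orbit, the Luna slice theorem together with the local quiver description of \cite{lebruyn} and \cite{CB} presents $\XC_\th(n\d_l)$ étale-locally as a smooth factor times the moment-map quotient of the local $\Ext$-quiver of $M$; this quotient is singular precisely because $M$ is decomposable. Hence $\XC_\th(n\d_l)$ is singular, contradicting $(1)$. A sanity check is available through Lemma~\ref{lem:dim}: the stabiliser at a non-simple point is strictly larger than $\D\CM^\times$, so the tangent space to $\XC_\th(n\d_l)$ there exceeds the expected dimension $2n$.

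Finally, $(2)\Leftrightarrow(3)$ is the combinatorial core, and I expect it to be the main obstacle. By the criterion of \cite{CB}, every representation of dimension $n\d_l$ over the prescribed value is simple if and only if there is no proper nonzero positive root $\beta\le n\d_l$ of the framed affine quiver with $\lambda\cdot\beta=0$, where $\lambda$ is the framed parameter attached to $\th$. The task is then to list these roots and their $\lambda$-pairings and to match them with the factors of the product in $(3)$: the imaginary roots $m\d_l$ contribute $m\,\Sigma(\th)$, whose non-vanishing for all relevant $m$ is equivalent to $\Sigma(\th)\neq 0$ (the leading factor); the real roots contribute, after untwisting the pairing via Remark~\ref{rem:twisted_action} and folding the framing symmetry $\beta\leftrightarrow n\d_l-\beta$, the factors $(\th_i+\cdots+\th_j)+k\,\Sigma(\th)$ with $1\le i\le j\le l-1$ and $-(n-1)\le k\le n-1$. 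The delicate point, and the reason a naive enumeration of all sub-dimension-vectors overshoots, is that the framing both shifts the vanishing condition (so that the relevant hyperplanes are $(\th_i+\cdots+\th_j)+k\Sigma(\th)=0$ rather than $\th_i+\cdots+\th_j=0$) and cuts the range of $k$ down to exactly $[-(n-1),n-1]$; getting this bookkeeping right, and using \cite{CB} to ensure that only genuinely occurring subrepresentations are counted, is where the real work lies.
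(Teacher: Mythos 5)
Your plan coincides with the proof that the paper itself relies on: the paper gives no argument for Theorem~\ref{theo:smooth}, it simply cites \cite[Lemma~4.3 and its proof]{gordon quiver}, and that proof is exactly the chain you outline (Crawley-Boevey's framed-quiver dictionary, Schur's lemma plus the slice theorem for (2)$\Rightarrow$(1) and the freeness assertion, Le Bruyn's local models \cite{lebruyn} for (1)$\Rightarrow$(2), and the root-theoretic criteria of \cite{CB} for (2)$\Leftrightarrow$(3)). As a plan it is the right plan, but it contains a genuine gap, which you yourself flag and then leave open: the equivalence (2)$\Leftrightarrow$(3) is described, not proved, and the criterion you propose to start from is false as stated. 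The existence of a proper nonzero positive root $\beta$ with $\lambda\cdot\beta=0$ does not by itself yield a non-simple representation of dimension $(1,n\d_l)$; one must also check that the complementary vector $(1,n\d_l-\beta)$ is a sum of $\lambda$-orthogonal positive roots (the nonemptiness half of Crawley-Boevey's results). This requirement is precisely what removes the factors with $k=-n$: for $\beta=n\d_l-(\a_i+\cdots+\a_j)$ with $1\le i\le j\le l-1$, the complement $(1,\a_i+\cdots+\a_j)$ misses the vertex $0$, so its support is disconnected from the framing vertex, any decomposition into positive roots must split off the simple root $e_\infty$, and $\lambda\cdot e_\infty=-n\Sigma(\th)\neq 0$. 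Carrying out this enumeration (real roots $(\a_i+\cdots+\a_j)+k\d_l$ and $k\d_l-(\a_i+\cdots+\a_j)$, imaginary roots $k\d_l$, plus the representability check on all complements) is the entire content of \cite[Lemma~4.3]{gordon quiver}; without it, the exact range $-(n-1)\le k\le n-1$ in (3) is not established.

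A second gap is the silent replacement of condition (2) --- simplicity as a representation of $\Qov_l$ --- by simplicity of the framed representation. Writing $p=X_0Y_0-Y_{-1}X_{-1}-\th_0\Id$ (rank $\le 1$, trace $-n\Sigma(\th)$) and $p=-vw$, the two notions agree at $(X,Y)\in\YC_\th(n\d_l)$ only when $v\neq 0\neq w$: a trace computation ($[X,Y]$ preserves any $X,Y$-stable subspace $U$ and has trace zero on it, so $\sum_i\th_i\dim U_i+\Tr(p|_{U_0})=0$) shows that every $X,Y$-stable subspace is automatically a framed subrepresentation, but ruling out the framed subrepresentations of dimensions $(1,0)$ and $(0,n\d_l)$ uses $\Tr(p)=-n\Sigma(\th)\neq 0$. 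Since $\Sigma(\th)\neq 0$ is part of the conclusion (3), invoking framed simplicity to prove (2)$\Rightarrow$(3) is circular at exactly that point. The repair, and the place where $n\ge 2$ must enter, is to observe that if $\Sigma(\th)=0$ then $\th\cdot\d_l=0$, so there exist representations of dimension $\d_l$ with moment-map value $\Irm_\th(\d_l)$, and a direct sum of $n\ge 2$ of them lies in $\YC_\th(n\d_l)$ (with rank-one correction $0\in\OC_\th(n\d_l)$) and violates (2). This is not pedantry: for $n=1$ the literal statement of the theorem fails --- the paper itself notes after Theorem~\ref{theo:quiver-cm} that $\XC_\th(\d_l)$ is independent of $\th_0$, whereas (3) depends on $\th_0$ through $\Sigma(\th)$; concretely, for $l=2$ and $\th=(-t,t)$ with $t\neq 0$, every point of $\YC_\th(\d_2)$ is a simple $\Qov_2$-representation and $\XC_\th(\d_2)$ is a smooth affine quadric, yet (3) fails. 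Finally, two smaller points: in (1)$\Rightarrow$(2), the claim that the Luna-local model is singular whenever the closed orbit is properly decomposable is Le Bruyn's theorem rather than a consequence of the slice theorem alone, and your ``sanity check'' (larger stabiliser, hence larger tangent space) is not a valid inference.
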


\bigskip

\subsection{Fixed points}\label{sub:fixed}
For every $m \ge 1$, the group of $m$-th roots of unity $\mub_m$ acts on $\XC_\th(n\d_l)$ 
(this is the restriction of the $\CM^\times$-action defined in~\S\ref{sub:quiver}). 
We aim to compute the fixed point variety $\XC_\th(n\d_l)^{\mub_m}$ whenever 
$\XC_\th(n\d_l)$ is smooth. Note first that $\mub_l$ acts trivially on $\XC_\th(n\d_l)$: 
indeed, if $\z \in \mub_l$ and if $(X,Y) \in \YC_\th(n\d_l)$, 
then $\z \cdot (X,Y) = \lexp{g_0}{(X,Y)}$, where $g_0=(\z^i \Id_{\CM^n})_{i \in \ZM/l\ZM} \in \Gb\Lb(n\d_l)$. 
This means that, in order to describe $\XC_\th(n\d_l)^{\mub_m}$, we may replace $\mub_m$ 
by $\langle \mub_m,\mub_l \rangle$ or, in other words, we may assume that 
$l$ divides $m$.

\medskip

\boitegrise{{\bf Hypothesis.} {\it We fix in this subsection a non-zero natural number 
$k$ and we set $m=kl$.}}{0.75\textwidth}

\medskip

If $\th=(\th_i)_{i \in \ZM/l\ZM} \in \CM^{\ZM/l\ZM}$, we define $\th[k]$ to be the element $(\th[k]_j)_{j \in \ZM/m\ZM}$ 
such that
$$\th[k]_j = \th_i$$
if $j \equiv i \mod l$. Roughly speaking, $\th[k]$ is the concatenation of $k$ copies 
of $\th$. 

We denote by $\ZM^{\ZM/m\ZM}[n\delta_l]$ the set of $d=(d_j)_{j \in \ZM/m\ZM} \in \ZM^{\ZM/m\ZM}$ 
such that 
$$n=\sum_{\substack{j \in \ZM/m\ZM \\ j \equiv i \mod l}} d_j$$
for all $i \in \ZM/l\ZM$. Set also $\EC(k,l,n)=\ZM^{\ZM/m\ZM}[n\delta_l]\cap (\ZM^{\ZM/m\ZM})_+$.  If $d \in \EC(k,l,n)$ and 
$(X,Y) \in \Rep(\Qov_m,d)$, 
we set 
$$i_k^{(d)}(X,Y)=(X',Y'),$$
where $X'_i = \bigoplus_{\substack{j \in \ZM/m\ZM \\ j \equiv i \mod l}} X_j$ 
and $Y_i' = \bigoplus_{\substack{j \in \ZM/m\ZM \\ j \equiv i \mod l}} Y_j$. 
By the definition of $\EC(k,l,n)$, we have $X_i'$, $Y_i' \in \Mat_n(\CM)$. 
In other words, $(X',Y') \in \Rep(\Qov_l,n\d_l)$: it is clear that 
$i_k^{(d)} : \Rep(\Qov_m,d) \injto \Rep(\Qov_l,n\d_l)$ is a closed immersion. 

\bigskip

\begin{lem}\label{lem:closed-immersion}
If $d \in \EC(k,l,n)$ and $(X,Y) \in \YC_{\th[k]}(d)$, then $i_k^{(d)}(X,Y) \in \YC_\th(n\d_l)$. 
\end{lem}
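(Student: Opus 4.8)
The plan is to compute the moment map $\mu_{n\d_l}$ of the block-diagonal representation $i_k^{(d)}(X,Y) = (X',Y')$ directly in terms of the moment map $\mu_d$ of $(X,Y)$, and then read off membership in $\YC_\th(n\d_l)$ from the hypothesis $(X,Y) \in \YC_{\th[k]}(d)$. First I would record the block structure: for each $i \in \ZM/l\ZM$ the vertex space of $i_k^{(d)}(X,Y)$ at $i$ is $\bigoplus_{j \equiv i} \CM^{d_j} = \CM^n$, the equality of dimensions being exactly the defining condition of $\ZM^{\ZM/m\ZM}[n\d_l]$, and by construction $X'_i$ and $Y'_i$ are block diagonal with respect to this decomposition.

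The key step is the identity
$$\bigl(\mu_{n\d_l}(X',Y')\bigr)_i = \bigoplus_{\substack{j \in \ZM/m\ZM \\ j \equiv i \mod l}} \bigl(\mu_d(X,Y)\bigr)_j \qquad (i \in \ZM/l\ZM).$$
To prove it I would track the arrow directions carefully: composing the block-diagonal maps gives $X'_i Y'_i = \bigoplus_{j \equiv i} X_j Y_j$ and $Y'_{i-1} X'_{i-1} = \bigoplus_{j \equiv i} Y_{j-1} X_{j-1}$, so that the $i$-th component $X'_i Y'_i - Y'_{i-1} X'_{i-1}$ is the direct sum over $j \equiv i$ of the $j$-th components $X_j Y_j - Y_{j-1} X_{j-1}$ of $\mu_d(X,Y)$. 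The only delicate point is the index shift in the second product: after reindexing $j' = j-1$, the summands of $Y'_{i-1}X'_{i-1}$ land precisely on the blocks indexed by $j \equiv i$, matching those of $X'_iY'_i$. This bookkeeping is the main (and essentially the only) obstacle; once it is done the rest is immediate.

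With this identity in hand the conclusion follows from the concatenation structure of $\th[k]$. For $i \neq 0$ in $\ZM/l\ZM$, no index $j \equiv i \mod l$ equals $0$ in $\ZM/m\ZM$, so the hypothesis forces each summand to be $\th[k]_j \Id_{\CM^{d_j}} = \th_i \Id_{\CM^{d_j}}$, whence $\bigl(\mu_{n\d_l}(X',Y')\bigr)_i = \th_i \Id_{\CM^n}$, as required for the components indexed by $i \ne 0$. For $i = 0$ the unique special index is $j = 0$, contributing $\th_0 \Id_{\CM^{d_0}} + R$ with $R$ of rank $\le 1$, while the remaining blocks $j \in \{l, 2l, \ldots, (k-1)l\}$ each contribute $\th_0 \Id_{\CM^{d_j}}$; hence $\bigl(\mu_{n\d_l}(X',Y')\bigr)_0 = \th_0 \Id_{\CM^n} + R'$, where $R'$ is $R$ extended by zero on the complementary blocks.

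Finally I would check that $R' \in \OC_\th(n\d_l)$. It has rank $\le 1$ since $R$ does. For the trace, using $R \in \OC_{\th[k]}(d)$ together with the defining property $\sum_{j \equiv i \mod l} d_j = n$ of $\ZM^{\ZM/m\ZM}[n\d_l]$, I get
$$\Tr(R') = \Tr(R) = -\!\!\sum_{j \in \ZM/m\ZM}\!\! \th[k]_j\, d_j = -\sum_{i \in \ZM/l\ZM} \th_i\, n = -n\,\Sigma(\th),$$
which is exactly the prescribed trace $-\sum_{i} \th_i (n\d_l)_i$ of $\OC_\th(n\d_l)$. Therefore $\mu_{n\d_l}(X',Y') \in \Irm_\th(n\d_l) + \OC_\th(n\d_l)$, i.e.\ $i_k^{(d)}(X,Y) \in \YC_\th(n\d_l)$, which is the assertion of the lemma.
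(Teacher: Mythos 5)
Your proof is correct and follows essentially the same route as the paper's: both compute the moment map of $i_k^{(d)}(X,Y)$ vertex by vertex using the block-diagonal structure, obtaining $X_i'Y_i'-Y_{i-1}'X_{i-1}'=\bigoplus_{j\equiv i}(X_jY_j-Y_{j-1}X_{j-1})=\th_i\Id_{\CM^n}+\d_{0i}\,p_0$ with $p_0$ the rank-one correction at the $0$-th vertex. The only difference is that you spell out the rank and trace check for the extended correction (using $\sum_{j\equiv i \bmod l}d_j=n$ to get trace $-n\Sigma(\th)$), a verification the paper leaves implicit in its ``as desired.''
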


\bigskip

\begin{proof}
Write $i_k^{(d)}(X,Y)=(X',Y')$ and let $i \in \ZM/l\ZM$. Write $p_0$ for the 
rank one matrix $X_0Y_0-Y_{-1}X_{-1}-\th[k]_0 \Id_{\CM^{d_0}}$. 
Then 
\eqna
X_i'Y_i'-Y_{i-1}'X_{i-1}'&=&
\DS{\bigoplus_{\substack{j \in \ZM/m\ZM \\ j \equiv i \mod l}} (X_jY_j-Y_{j-1}X_{j-1})} \\
&=& \DS{\bigoplus_{\substack{j \in \ZM/m\ZM \\ j \equiv i \mod l}} \th[k]_j \Id_{\CM^{d_j}} } + 
\d_{0i} p_0\\
&=& \th_i \Id_{\CM^n} + \d_{0i} p_0,\\
\endeqna
as desired.
\end{proof}

\bigskip

It is also clear that, if $(X,Y)$ and $(\widetilde X,\widetilde Y)$ are two elements of $\YC_{\th[k]}(d)$ which 
are conjugate under $\Gb\Lb(d)$, then $i_k^{(d)}(X,Y)$ and $i_k^{(d)}(\widetilde X,\widetilde Y)$ are conjugate 
under $\Gb\Lb(n\d_l)$. This means that $i_k^{(d)}$ induces a morphism of algebraic varieties
still denoted by
$$i_k^{(d)} : \XC_{\th[k]}(d) \longto \XC_\th(n\d_l).$$
For $d\in \EC(k,l,n)$, set 
$$
\XC_d=i_k^{(d)}(\XC_{\th[k]}(d)) \subset \XC_\th(n\d_l).
$$ 

For the moment, we allow the possibility that the varieties $\XC_{\th[k]}(d)$ and $\XC_d$ are empty. Set 
$$
\EC(k,l,n)_{\ne\vide}=\{d \in \EC(k,l,n);~\XC_d\ne \vide\}.
$$ 
We will show in Corollary \ref{coro:always-not-empty} that we have $\EC(k,l,n)_{\ne\vide}=\EC(k,l,n)$.

The next result describes the fixed point variety $\XC_\th(n\d_l)^{\mub_m}$: it 
is the main step towards the proof of Theorem~\ref{theo:main}.

\bigskip

\begin{theo}\label{theo:fixed-quiver}
Assume that $\XC_\th(n\d_l)$ is smooth. Then:
\begin{itemize}
\itemth{a} $\XC_\th(n\d_l)^{\mub_m}$ is smooth.


\itemth{b} We have 
$$
\XC_\th(n\d_l)^{\mub_m}=\coprod_{d\in \EC(k,l,n)_{\ne\vide}} \XC_d
$$
and $\XC_d$ are exactly the irreducible components of $\XC_\th(n\d_l)^{\mub_m}$.

\itemth{c} If $d \in \EC(k,l,n)_{\ne\vide}$, then $i_k^{(d)}\colon \XC_{\th[k]}(d) \to \XC_d$ is an isomorphism.

\end{itemize}
\end{theo}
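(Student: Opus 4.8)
The plan is to use the smoothness hypothesis to linearize the $\mub_m$-action. By Theorem~\ref{theo:smooth}, smoothness of $\XC_\th(n\d_l)$ means that every point of $\YC_\th(n\d_l)$ is a simple representation of $\Qov_l$ and that $\Pb\Gb\Lb(n\d_l)$ acts freely; since simple representations have closed orbits and trivial stabiliser in $\Pb\Gb\Lb(n\d_l)$, the quotient $\XC_\th(n\d_l)$ is a geometric quotient, i.e. an honest orbit space. Part~(a) is then immediate: the fixed locus of a finite group acting on a smooth complex variety is smooth. We may assume $n\ge 1$, the case $n=0$ being trivial (here $\XC_\th(0)$ is a point and $d=0\in\EC(k,l,0)$).

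For the heart of (b) and (c), fix a primitive $m$-th root of unity $\z$ and a fixed point $[(X',Y')]\in\XC_\th(n\d_l)^{\mub_m}$. Choose $g=(g_i)_{i\in\ZM/l\ZM}\in\Gb\Lb(n\d_l)$ with $\z\cdot(X',Y')=\lexp{g}{(X',Y')}$. Simplicity forces $g^m\in\Stab(X',Y')=\D\CM^\times$, so after a uniform rescaling we may assume $g^m=\Id$; thus $\z\mapsto g$ defines a homomorphism $\rho\colon\mub_m\to\Gb\Lb(n\d_l)$. Decomposing each $\CM^n$ (at vertex $i$ of $\Qov_l$) into the $\z^a$-eigenspaces $V_{i,a}$ of $g_i$ for $a\in\ZM/m\ZM$, the relation $\z\cdot(X',Y')=\lexp{g}{(X',Y')}$ together with the convention $\xi\cdot(X,Y)=(\xi^{-1}X,\xi Y)$ forces $Y'_i(V_{i,a})\subset V_{i+1,a+1}$ and $X'_i(V_{i+1,a})\subset V_{i,a-1}$. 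Since $\mub_l$ acts trivially through $g_0=(\z^{ki}\Id)_i$, comparing $g^k$ with $g_0$ (they agree up to a scalar) yields $a\equiv i\bmod l$ on all nonzero $V_{i,a}$, after absorbing one global scalar into $\rho$; the remaining freedom of shifting all weights by a multiple of $l$ lets us move the moment-map defect (unique, and genuinely of rank one since $\Sigma(\th)\neq0$ and $n\ge1$) to the vertex $0$ of $\Qov_m$. This determines $\rho$ uniquely and produces a well-defined dimension vector $d=(d_j)_{j\in\ZM/m\ZM}$, with $d_j=\dim V_{i,j}$ for $i\equiv j\bmod l$, satisfying $d\ge0$ and $d\in\ZM^{\ZM/m\ZM}[n\d_l]$. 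Reading off the maps, the eigenspace grading refines $(X',Y')$ to a representation $(X,Y)\in\Rep(\Qov_m,d)$ with $i_k^{(d)}(X,Y)=(X',Y')$; the moment-map conditions match those of $\YC_{\th[k]}(d)$ because $\th_i=\th[k]_a$ whenever $a\equiv i$ and the defect now sits at vertex $0$. Hence $(X,Y)\in\YC_{\th[k]}(d)$.

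A graded subrepresentation of $(X',Y')$ is the same thing as a subrepresentation of the $\Qov_m$-representation $(X,Y)$, so simplicity of $(X',Y')$ forces $(X,Y)$ to be simple; Lemma~\ref{lem:descrX-if-simple} then gives $d\in(\ZM^{\ZM/m\ZM})_+$, hence $d\in\EC(k,l,n)_{\ne\vide}$, and that $\XC_{\th[k]}(d)$ is normal and irreducible, while $[(X',Y')]\in\XC_d$; thus $\XC_\th(n\d_l)^{\mub_m}=\bigcup_{d}\XC_d$ set-theoretically. Because $d$ is read off the canonical $\rho$, it is an intrinsic invariant of the fixed point, so distinct $d$ give disjoint $\XC_d$; and if $i_k^{(d)}(X,Y)$ and $i_k^{(d)}(\widetilde X,\widetilde Y)$ are conjugate by some $h\in\Gb\Lb(n\d_l)$, then $h$ conjugates the two canonical elements into one another, hence preserves the coordinate grading, so $h\in\Gb\Lb(d)$ and $i_k^{(d)}$ is injective on $\XC_{\th[k]}(d)$. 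To pass to varieties: by construction and Lemma~\ref{lem:closed-immersion}, $i_k^{(d)}$ restricts to a closed $\Gb\Lb(d)$-equivariant embedding $\YC_{\th[k]}(d)\injto\YC_\th(n\d_l)$ onto the locus of representations block-diagonal for the fixed coordinate grading, and each $\Gb\Lb(n\d_l)$-orbit meets this locus in exactly one $\Gb\Lb(d)$-orbit; using freeness of both $\Pb\Gb\Lb$-actions and the resulting geometric quotients, this descends to a closed immersion $\XC_{\th[k]}(d)\injto\XC_\th(n\d_l)$ with image $\XC_d$. In particular $\XC_d$ is closed and irreducible, and the bijective closed immersion $i_k^{(d)}\colon\XC_{\th[k]}(d)\longiso\XC_d$, with normal source, is an isomorphism, proving~(c) ($\CM^\times$-equivariance being clear from the defining formula). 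Finally the $\XC_d$ are finitely many disjoint closed irreducible subsets covering the smooth, hence locally irreducible, variety $\XC_\th(n\d_l)^{\mub_m}$, so they are exactly its irreducible components, proving~(b).

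I expect the main obstacle to be the linearization and normalization in the second paragraph: producing a canonical homomorphism $\rho$ and pinning down the weight labelling so that simultaneously $a\equiv i\bmod l$ and the rank-one defect lands at vertex $0$ of $\Qov_m$. Only with this precise normalization is $d$ well-defined and does the refinement genuinely land in $\YC_{\th[k]}(d)$ rather than in a variant with the defect at another vertex $\equiv0\bmod l$. A secondary, more technical point is the descent in the last paragraph, where one must use freeness of the $\Pb\Gb\Lb$-actions to control orbit intersections and to guarantee that $\XC_d$ is closed, thereby upgrading the set-theoretic decomposition to a decomposition into smooth closed irreducible components.
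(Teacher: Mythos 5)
Your pointwise analysis is sound and in fact coincides with the paper's: your canonical homomorphism $\rho$ (normalized by $g^m=\Id$, weights $\equiv i \bmod l$, rank-one defect in weight $0$) is exactly the paper's canonical element $g$ (normalized by $g\,p_{X,Y}=p_{X,Y}$, which by freeness forces uniqueness and $g^k=g_0$), and your eigenspace refinement is the paper's identification $q^{-1}(g)\simeq\YC_{\th[k]}(d)$. The genuine gap is the step you dismiss as ``secondary, more technical'': the assertion that $i_k^{(d)}$ ``descends to a closed immersion $\XC_{\th[k]}(d)\injto\XC_\th(n\d_l)$ with image $\XC_d$'' does not follow from freeness of the $\Pb\Gb\Lb$-actions and the quotients being geometric. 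Under a geometric quotient, the image of a closed subvariety is closed only if that subvariety is saturated (a union of orbits); here $\YC_{\th[k]}(d)$ is not $\Gb\Lb(n\d_l)$-stable, and what must be proved is that its saturation $\Gb\Lb(n\d_l)\cdot\YC_{\th[k]}(d)$ is closed in $\YC_\th(n\d_l)$. Injectivity of the descended map says nothing about this: for instance $\CM^\times$ acts freely with geometric quotient on $\CM^\times\times\CM$ by $t\cdot(x,y)=(tx,y)$, the subvariety $\{xy=1\}$ is closed, yet its image in the quotient $\CM$ is $\CM\setminus\{0\}$. Without closedness of $\XC_d$ your conclusion of (b) collapses, since a smooth variety can be a finite disjoint union of irreducible non-closed sets that are not its components (e.g.\ $\CM=\{0\}\sqcup(\CM\setminus\{0\})$); and (c) is also affected, because your appeal to Zariski's main theorem is in the wrong direction: a bijective morphism in characteristic $0$ is an isomorphism when the \emph{target} is normal, whereas you invoke normality of the source (and normality of the target $\XC_d$ is only available after (b) is established).

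What is missing is precisely the paper's main device: one must make your pointwise canonical element vary algebraically with the point. The paper introduces the incidence variety $\ZC\subset\YC_\th(n\d_l)\times\Gb\Lb(n\d_l)$ of pairs $((X,Y),g)$ with $\lexp{g}{(X,Y)}=(\xi^{-1}X,\xi Y)$ and $g\,p_{X,Y}=p_{X,Y}$, shows that the projection $p'$ is a bijective morphism onto $\pi^{-1}\bigl(\XC_\th(n\d_l)^{\mub_m}\bigr)$, which is smooth (hence normal) by freeness, so that $p'$ is an isomorphism; composing its inverse with the projection to $\Gb\Lb(n\d_l)$ yields a genuine morphism $\tau\colon\XC_\th(n\d_l)^{\mub_m}\to\Gb\Lb(n\d_l)/\!\!/\Gb\Lb(n\d_l)$ recording the conjugacy class of the canonical element. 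Then $\XC_d=\tau^{-1}(\CC_{g_d})$, where $g_d$ is the block-diagonal element determined by $d$, and this is closed because semisimple conjugacy classes are closed; finally (c) follows from transitivity of conjugation on $\CC_{g_d}$, which identifies $q^{-1}(\CC_{g_d})/\!\!/\Gb\Lb(n\d_l)$ with $q^{-1}(g_d)/\!\!/C_{\Gb\Lb(n\d_l)}(g_d)=\YC_{\th[k]}(d)/\!\!/\Gb\Lb(d)=\XC_{\th[k]}(d)$. Your write-up contains all the fiberwise ingredients for this (uniqueness of the canonical element, identification of its centralizer with $\Gb\Lb(d)$), but the upgrade from a pointwise statement to a statement about morphisms of varieties is the crux of the proof, not a technicality, and as written your argument does not supply it.
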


\bigskip

\begin{proof}
(a) follows from the fact already mentioned in the proof of Corollary~\ref{coro:filtration} 
that the fixed point variety under the action of a finite group on a smooth variety 
is again smooth.

\medskip

We will now prove (b). Let $\xi$ be a primitive $m$-th root of unity and 
let $\z=\xi^k$ (it is a primitive $l$-th root of unity). Let $g_0$ denote the element $(\z^i \Id_{\CM^n})_{i \in \ZM/l\ZM}$ 
in $\Gb\Lb(n\d_l)$. This element satisfies 
$$\lexp{g_0}{(X,Y)}=(\z^{-1}X,\z Y)\leqno{(*)}$$
for all $(X,Y) \in \YC_\th(n\d_l)$. 
We will use the following variety:
$$\ZC=\{(X,Y,g) \in \YC_{\th}(n\d_l) \times \Gb\Lb(n\d_l)~|~
\lexp{g}{(X,Y)}=(\xi^{-1}X,\xi Y)\text{~and~}g p_{X,Y}=p_{X,Y}\},$$
where $p_{X,Y}=([X_i,Y_i]-\th_i)_{i \in \ZM/l\ZM}$: it is $0$ except on the $0$-th component, 
where it is equal to a rank one matrix of trace $-n\Sigma(\th)$ (recall from Theorem~\ref{theo:smooth} 
that $\Sigma(\th) \neq 0$ because $\XC_\th(n\d_l)$ is assumed to be smooth). 

We denote by $p' : \ZC \to \YC_\th(n\d_l)$ the projection on the first two factors, 
by $p : \ZC \to \XC_\th(n\d_l)$ the map induced by $p'$ and by $q : \ZC \to \Gb\Lb(n\d_l)$ the projection on 
the third factor. Note that $\Pb\Gb\Lb(n\d_l)$ acts on $\ZC$ by conjugacy 
on the three factors, and so the maps $p'$ and $q$ are equivariant for this action.

First, it is clear that $p(\ZC) \subset \XC_\th(n\d_l)^{\mub_m}$. 
Conversely, if $(X,Y) \in \YC_\th(n\d_l)$ is a representative of 
an element of $\XC_\th(n\d_l)^{\mub_m}$, then there exists $g \in \Pb\Gb\Lb(n\d_l)$ such that 
$\lexp{g}{(X,Y)}=(\xi^{-1}X,\xi Y)$. In particular, $\lexp{g}{p_{X,Y}}=p_{X,Y}$. 
So $g$ stabilizes the image of $p_{X,Y}$, which is of dimension $1$: this means that 
$g$ acts by a non-zero scalar $\o$ on $\im(p_{X,Y})$. Setting $g'=\o^{-1} g$, we see 
that $(X,Y,g) \in \ZC$. So we have proved the following fact:

\medskip

\noindent{\bf Fact 1.} {\it $p(\ZC)=\XC_\th(n\d_l)^{\mub_m}$.}

\medskip

The next fact follows from the freeness of the action of $\Pb\Gb\Lb(n\d_l)$ on 
$\YC_\th(n\d_l)$ (see Theorem~\ref{theo:smooth}). 

\medskip

\noindent{\bf Fact 2.} {\it If $(X,Y,g)$ and $(X,Y,g')$ belong to $\ZC$, then $g=g'$ and $g^k=g_0$.} 

\medskip

Indeed, the hypothesis implies that $\lexp{g}{(X,Y)}=\lexp{g'}{(X,Y)}$ and $\lexp{g^k}{(X,Y)}=\lexp{g_0}{(X,Y)}$. 
So there exists two non-zero complex numbers $\a$ and $\b$ such that $g'=\a g$ and $g^k=\b g_0$. 
But the condition $g p_{X,Y}= g' p_{X,Y} = g_0 p_{X,Y}$ forces $\a=\b=1$ because $p_{X,Y} \neq 0$. 

\medskip

Now, let $\CC$ denote the set of elements $g \in \Gb\Lb(n\d_l)$ such that $g^k=g_0$. 
Since $g_0$ is central in $\Gb\Lb(n\d_l)$, $\CC$ is a disjoint union of finitely 
many semisimple conjugacy classes (which are the irreducible components of $\CC$). 
We fix $g \in \CC$ such that $q^{-1}(g)\ne \vide$ and we denote by $\CC_g$ its conjugacy class in $\Gb\Lb(n\d_l)$. 
We will describe $q^{-1}(g)$. For this, let $E=\bigoplus_{i \in \ZM/l\ZM} \CM^n$ and 
let $E_j=E^{\xi^{-j}g}$ (for $j \in \ZM/m\ZM$). In other words, $E_j$ is the $\xi^j$-eigenspace of $g$ 
in $E$: it is contained in the $i$-th component of $E$, where $i$ is the element of $\ZM/l\ZM$ such 
that $j \equiv i \mod l$. Let $d_j = \dim_\CM(E_j)$ and let $d=(d_j)_{j \in \ZM/m\ZM}$. 

If $(X,Y,g) \in \ZC$, then $X$ sends $E_j$ to $E_{j-1}$ (we denote by $X_j : E_j \to E_{j-1}$ the induced map) 
while $Y$ sends $E_j$ to $E_{j+1}$ (we denote by $Y_j : E_j \to E_{j+1}$ the induced map). By choosing 
a basis of every $E_j$, the family $(X_j,Y_j)_{j \in \ZM/m\ZM}$ defines 
a representation of the quiver $\Qov_m$, of dimension vector $d$. This means 
that we have defined a map $q^{-1}(g) \longto \Rep(\Qov_m,d)$, which is clearly 
a closed immersion, and whose image is $\YC_{\th[k]}(d)$ (because $\im(p_{X,Y})$ 
is contained in $E_0$). 

Note that by construction we have $d\in \ZM^{\ZM/m\ZM}[n\d_l]$. Moreover, by Lemma \ref{lem:descrX-if-simple}(b) 
and Theorem \ref{theo:smooth} we also have $d\in (\ZM^{\ZM/m\ZM})_+$. This means that $d$ is an element of 
$\EC(k,l,n)=\ZM^{\ZM/m\ZM}[n\delta_l]\cap (\ZM^{\ZM/m\ZM})_+$. Moreover, it is clear that the set of elements $d$ 
that can appear as above (from some $g\in \CC$ such that $q^{-1}(g)\ne \vide$) is $\EC(k,l,n)_{\ne\vide}$.

This shows the following fact.

\medskip

\noindent{\bf Fact 3.} {\it With the above notation, $q^{-1}(g) \simeq \YC_{\th[k]}(d)$ . In particular, 
$q^{-1}(g)$ is irreducible.}

\medskip


Fact 3 and its proof imply the following.

\medskip
\noindent{\bf Fact 4.} {\it We have $p(q^{-1}(\CC_g))=\XC_d$.}

\medskip

Denote by $\pi$ the projection $\pi\colon \YC_\th(n\d_l)\to \XC_\th(n\d_l)$. Fact 2 implies that the map 
$p'\colon\ZC\to\pi^{-1}(\XC_\th(n\d_l)^{\mub_m})$ is bijective. Moreover, since $\pi^{-1}(\XC_\th(n\d_l)^{\mub_m})$ 
is smooth (the group $\Pb\Gb\Lb(n\d_l)$ acts freely on $\YC_\th(n\d_l)$), this bijection is an isomorphism of 
algebraic varieties. Let 
$$
(p')^{-1}\colon \pi^{-1}(\XC_\th(n\d_l)^{\mub_m})\to \ZC
$$ 
be its inverse. 

Consider the morphism $\widetilde\tau\colon \pi^{-1}(\XC_\th(n\d_l)^{\mub_m})\to \Gb\Lb(n\d_l)$ given 
by $\widetilde\tau=q\circ(p')^{-1}$. Since we have $\pi^{-1}(\XC_\th(n\d_l)^{\mub_m})/\!\!/\Gb\Lb(n\d_l)=\XC_\th(n\d_l)^{\mub_m}$, 
we get a morphism $\tau\colon \XC_\th(n\d_l)^{\mub_m}\to \Gb\Lb(n\d_l)/\!\!/\Gb\Lb(n\d_l)$, where the group $\Gb\Lb(n\d_l)$ acts on 
itself by conjugation.  

Moreover, since we clearly have $\tau^{-1}(\CC_g)=p(q^{-1}(\CC_g))$, Fact 4 implies the following.

\medskip
\noindent{\bf Fact 5.} {\it We have $\tau^{-1}(\CC_g)=\XC_d$. In particular $\XC_d$ is closed. }

\medskip

This completes the proof of~(b). Now we prove~(c). 
The group $\Gb\Lb(n\d_l)$ acts transitively on $\CC_g$. So we get an isomorphism 
of varieties 
$$q^{-1}(g)/\!\!/C_{\Gb\Lb(n\d_l)}(g) \longiso q^{-1}(\CC_g)/\!\!/\Gb\Lb(n\d_l).$$
Now, if $d=(\dim_\CM(E^{\xi^{-j}g}))_{j \in \ZM/m\ZM}$, $q^{-1}(g)$ identifies 
with $\YC_{\th[k]}(d)$ and $C_{\Gb\Lb(n\d_l)}(g)$ identifies with $\Gb\Lb(d)$. 
Glueing this with Facts 3 and 4, we get an isomorphism of varieties
$$\XC_{\th[k]}(d) \longiso \XC_d,$$
which is the map $i_k^{(d)}$. This proves~(c).
\end{proof}


\bigskip
\begin{coro}
\label{coro:big-m-C*}
If $k>n$, then we have $\XC_\th(n\d_l)^{\mub_m}=\XC_\th(n\d_l)^{\CM^\times}$.
\end{coro}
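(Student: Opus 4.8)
The plan is to establish the two inclusions separately. One of them comes for free: since $\mub_m$ is a subgroup of $\CM^\times$, we always have $\XC_\th(n\d_l)^{\CM^\times} \subseteq \XC_\th(n\d_l)^{\mub_m}$, with no condition on $k$. All the content lies in the reverse inclusion, and this is where the hypothesis $k > n$ will be used.

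To prove $\XC_\th(n\d_l)^{\mub_m} \subseteq \XC_\th(n\d_l)^{\CM^\times}$, I would reuse the geometric description obtained inside the proof of Theorem~\ref{theo:fixed-quiver} rather than its final decomposition. Given a point of $\XC_\th(n\d_l)^{\mub_m}$, represent it by a pair $(X,Y)$ admitting an element $g \in \Gb\Lb(n\d_l)$ with $g^k = g_0$ (in the notation of that proof), and consider the eigenspace decomposition $E = \bigoplus_{j \in \ZM/m\ZM} E_j$, where $E_j$ is the $\xi^j$-eigenspace of $g$, the map $X$ sends $E_j$ into $E_{j-1}$, and $Y$ sends $E_j$ into $E_{j+1}$. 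Writing $d_j = \dim_\CM E_j$, the resulting dimension vector lies in $\ZM^{\ZM/m\ZM}[n\d_l]$, so that $\sum_{j \equiv i \bmod l} d_j = n$ for every $i \in \ZM/l\ZM$. The decisive observation is then a pigeonhole argument for the residue $i = 0$: the $k$ nonnegative integers $d_j$ with $j \equiv 0 \bmod l$ sum to $n$, and since $k > n$ at least one of them must vanish. Hence there exists $j_0 \equiv 0 \bmod l$ with $E_{j_0} = 0$.

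The last step is to exploit this empty eigenspace to unroll the cyclic $\ZM/m\ZM$-grading into a genuine cocharacter. Denoting by $\langle j\rangle \in \{0,1,\dots,m-1\}$ the representative of $j - j_0$, I would define $h \colon \CM^\times \to \Gb\Lb(n\d_l)$ by letting $h(\lambda)$ act on $E_j$ as multiplication by $\lambda^{\langle j\rangle}$; this is a genuine element of $\Gb\Lb(n\d_l)$, since its $i$-th factor acts on $\bigoplus_{j \equiv i \bmod l} E_j \simeq \CM^n$. One then checks that $\lexp{h(\lambda)}{(X,Y)} = (\lambda^{-1}X,\lambda Y)$ for every $\lambda \in \CM^\times$, which shows that $(\lambda^{-1}X,\lambda Y)$ is $\Gb\Lb(n\d_l)$-conjugate to $(X,Y)$ and hence exhibits the point as $\CM^\times$-fixed, finishing the proof. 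I expect this verification to be the only delicate point, and it is exactly where the gap $j_0$ matters: whenever a component of $X$ or $Y$ is nonzero it connects two nonzero eigenspaces, so neither of the two indices involved equals $j_0$, and consequently $\langle\,\cdot\,\rangle$ changes by precisely $-1$ (for $X$) or $+1$ (for $Y$) instead of wrapping around by $\pm(m-1)$. The vanishing of $E_{j_0}$ is what forbids such a wrap-around, because any arrow crossing the junction between $m-1$ and $0$ would be incident to the zero space $E_{j_0}$ and therefore be zero.
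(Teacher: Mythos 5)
Your proof is correct and follows essentially the same route as the paper: both arguments rest on the pigeonhole observation that a nonnegative dimension vector whose entries in each residue class mod $l$ sum to $n$ must have a zero entry when $k>n$, together with the fact that such a zero entry lets one realize the $\CM^\times$-action by conjugation by a cocharacter of $\Gb\Lb$, so that the action descends trivially to the quotient. The only difference is presentational: the paper invokes the decomposition of Theorem~\ref{theo:fixed-quiver}(b) and the $\CM^\times$-equivariance of $i_k^{(d)}$, compressing your cocharacter construction into the single phrase that the action ``is induced by elements of $\Gb\Lb(d)$'', whereas you unwind that phrase explicitly on representatives, including the wrap-around check at the vanishing eigenspace.
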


\bigskip
\begin{proof}
Take $d \in \EC(k,l,n)_{\ne\vide}$. Since $k>n$, some component of $d$ must be zero. 
This implies that $\CM^\times$ acts trivially on $\XC_{\th[k]}(d)$  because the action of $\CM^\times$ is induced by elements of $\Gb\Lb(d)$.

Since $i_k^{(d)}$ is $\CM^\times$-equivariant and 
$$\XC_\th(n\d_l)^{\mub_m}=\coprod_{d\in \EC(k,l,n)_{\ne\vide}}i_k^{(d)}(\XC_{\th[k]}(d))$$ 
(by Theorem~\ref{theo:fixed-quiver}(b)), the $\CM^\times$-action on the variety $\XC_\th(n\d_l)^{\mub_m}$ is also trivial. 
\end{proof}
\bigskip

\section{Proof of Theorem~\ref{theo:main}}\label{sec:reductions}

\medskip

First, the explicit description of $\ZCB_{\! c}$ whenever $n=1$ (see Example~\ref{ex:cyclique}) 
allows to prove easily Theorem~\ref{theo:main} in this case. 
Also, let us decompose $V$ as
$$V=V^W \oplus V_1 \oplus \cdots \oplus V_r,$$
where the $V_i$ are the non-trivial irreducible components of $V$ as a $\CM W$-module. 
Then $W$ decomposes as $W = W_1 \times \cdots \times W_r$, where $(V_i,W_i)$ is then 
a reflection subquotient (subgroup) of $W$. Let $c_i$ denote the restriction of $c$ to $W_i$. 
Then 
$$\ZCB_{\! c}(V,W) \simeq V^W \times V^{* W} \times \ZCB_{\! c_1}(V_1,W_1) \times \cdots \times \ZCB_{\! c_r}(V_r,W_r)$$
so that the proof of Theorem~\ref{theo:main} is easily reduced to the case where $W$ acts 
irreducibly on $V/V^W$. 

In this case, the smoothness of $\ZCB_{\! c}(V,W)$ implies that $W$ is of type $G_4$ or $G(l,1,n)$ for some 
$l \ge 1$. The case of type $G_4$ will be handled by computer calculations: 
see Section~\ref{sec:g4} for details. This means that we can work under the following hypothesis:

\medskip

\boitegrise{{\bf Hypothesis.} {\it From now on, and until the end of this section, 
we assume that $n \ge 2$, that $V=\CM^n$ and that $W=G(l,1,n)$. We also assume that $\s$ is an $m$-th root 
of unity, where $m=kl$ for some $k \ge 1$.}}{0.75\textwidth}

\medskip

Recall that $G(l,1,n)$ is the group of monomial 
matrices with coefficients in $\mub_l$ (the group of $l$-th root of unity in $\CM^\times$) 
and recall that $n \ge 2$. 

\bigskip

\subsection{Quiver varieties vs Calogero-Moser spaces}\label{sub:quiver-cm}
We fix a primitive $l$-th root of unity $\zeta$. We denote by $s$ the permutation matrix corresponding to the 
transposition $(1,2)$ and we set
$$t=\diag(\zeta,1,\dots,1) \in W.$$
Then $s$, $t$, $t^2$,\dots, $t^{l-1}$ is a set of representatives of conjugacy 
classes of reflections of $W$. We set for simplification
$$a=c_s\qquad\text{and}\qquad k_j=\frac{1}{l} \sum_{i=1}^{l-1} \zeta^{-i(j-1)} c_{t^i}$$
for $j \in \ZM/l\ZM$. Then
\equat\label{eq:k}
k_0+\cdots + k_{l-1} = 0\qquad \text{and}\qquad c_{t^i}=\sum_{j \in \ZM/l\ZM} \zeta^{i(j-1)} k_j
\endequat
for $1 \le i \le l-1$. Finally, if $i \in \ZM/l\ZM$, we set

\equat
\th_i=
\begin{cases}
k_{-i}-k_{1-i} & \text{if $i \neq 0$,}\\
-a +k_0-k_1 & \text{if $i=0$.}\\
\end{cases}
\endequat\label{eq:theta-k}
and $\th=(\th_i)_{i \in \ZM/l\ZM}$. 

The following result is proved in~\cite[Theorem~3.10]{gordon quiver}. (Note that our $k_i$ is related with 
Gordon's $H_i$ via $H_i=k_{-i}-k_{1-i}$.)

\bigskip

\begin{theo}\label{theo:quiver-cm}
With the above notation, there is a $\CM^\times$-equivariant 
isomorphism of varieties
$$\ZCB_{\! c} \longiso \XC_\th(n\d_l).$$
\end{theo}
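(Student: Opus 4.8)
The statement is established in \cite[Theorem~3.10]{gordon quiver}; the plan I would follow is to reconstruct it from the realization of Calogero--Moser spaces of wreath products as quiver varieties, due to Etingof--Ginzburg~\cite[\S11]{EG} and to Gan--Ginzburg. The first step is to recognize the group-theoretic structure: $W=G(l,1,n)$ is the wreath product $\mub_l \wr \mathfrak{S}_n$, where $\mub_l=\langle\zeta\rangle$ is embedded in $\SL_2(\CM)$ by $\zeta\mapsto\diag(\zeta,\zeta^{-1})$ acting on $\CM^2$. Accordingly $W$ acts on $(\CM^2)^{\oplus n}=V\oplus V^*$ preserving the standard symplectic form, and $\Hb_c$ is precisely the symplectic reflection algebra of this wreath product at $t=0$. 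The McKay quiver of $\mub_l\subset\SL_2(\CM)$ is the doubled cyclic quiver $\Qov_l$ of affine type $\Ati_{l-1}$, whose vertices $i\in\ZM/l\ZM$ label the irreducible characters of $\mub_l$; this is what singles out $\Qov_l$ and the dimension vector $n\d_l$, the latter recording that each isotypic component occurs with multiplicity $n$.

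The second step is the Hamiltonian-reduction dictionary. The variety $\XC_\th(n\d_l)=\YC_\th(n\d_l)/\!\!/\Pb\Gb\Lb(n\d_l)$ is exactly the Nakajima quiver variety attached to $\Qov_l$ with dimension vector $n\d_l$ and a one-dimensional framing at the vertex $0$: the rank-$\le 1$ term $\OC_\th(n\d_l)$ at the vertex $0$ encodes the framing maps $\CM\to\CM^n$ and $\CM^n\to\CM$, while the trace condition $-n\,\Sigma(\th)$ is forced by $\Tr\bigl(\sum_{i}\mu_{n\d_l}(X,Y)_i\bigr)=0$. The content to be imported from~\cite[\S11]{EG} and from the work of Gan and Ginzburg is that the center $\Zb_c$ is isomorphic, as a Poisson algebra, to $\CM[\YC_\th(n\d_l)]^{\Pb\Gb\Lb(n\d_l)}=\CM[\XC_\th(n\d_l)]$, the commutation relation~$(\HC_c)$ of $\Hb_c$ turning, after the McKay identification exchanging $V$, $V^*$ with the arrow spaces $X$, $Y$, into the preprojective/moment-map relation $X_iY_i-Y_{i-1}X_{i-1}=\th_i\Id+\d_{i0}\cdot(\text{rank }1)$. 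As a consistency check, both $\ZCB_{\! c}$ and $\XC_\th(n\d_l)$ have dimension $2n$ (Lemma~\ref{lem:dim}).

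The third step is to match parameters and the torus action. Tracking the conjugacy-class function $c$ --- encoded by $a=c_s$ and the $k_j$ of~\eqref{eq:k} --- through the levels of the reduction produces the affine-linear formula~\eqref{eq:theta-k}; the cyclic parameters feed the non-affine nodes via $\th_i=k_{-i}-k_{1-i}$, whereas the single symmetric-group parameter $a$ enters only at the affine node $0$, so that $\Sigma(\th)=-a$. This is the structurally correct picture, since $\Sigma(\th)$ governs the deformation in the $\mathfrak{S}_n$-direction, both sides being Poisson deformations of $(\CM^2)^n/W=\mathrm{Sym}^n(\CM^2/\mub_l)$. Finally, $\CM^\times$-equivariance follows by transporting the $\ZM$-grading of $\Hb_c$, under which $V$ and $V^*$ sit in opposite degrees, through the same dictionary to the action $\xi\cdot(X,Y)=(\xi^{-1}X,\xi Y)$ of~\S\ref{sub:quiver}; checking that the signs agree with that convention is routine.

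The main obstacle is the second step: proving on the nose --- not merely at the level of associated graded rings --- that $\Zb_c$ is the coordinate ring of the reduction, together with the exact bookkeeping of the parameter dictionary~\eqref{eq:theta-k}. I would organize this by equipping both $\Zb_c$ and $\CM[\XC_\th(n\d_l)]$ with filtrations whose associated graded is $\CM[(\CM^2)^n]^{W}=\CM[\mathrm{Sym}^n(\CM^2/\mub_l)]$, constructing the comparison morphism, and deducing that it is an isomorphism from the equality of associated gradeds by a flatness argument, the deformation parameters being identified as above.
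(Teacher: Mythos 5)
Your proposal takes essentially the same route as the paper: the paper's entire proof of Theorem~\ref{theo:quiver-cm} is the citation to~\cite[Theorem~3.10]{gordon quiver}, together with the parameter dictionary (Gordon's $H_i$ equals $k_{-i}-k_{1-i}$, and the affine node carries the $\mathfrak{S}_n$-parameter, so $\Sigma(\th)=-a$), which is exactly what you invoke. Your additional reconstruction sketch---wreath-product symplectic reflection algebra, McKay quiver of $\mub_l\subset\SL_2(\CM)$, Hamiltonian reduction with rank-one framing at the node $0$, and the filtration/flatness comparison---is a faithful outline of how that cited theorem is actually established, so it is a correct supplement rather than a different proof.
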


\bigskip

The theorem above is also true for $n=1$. But in this case, the variety $\ZCB_{\! c}$ has no parameter $a$. On the other hand, the variety $\XC_\th(\d_l)$ is independent of $\theta_0$.

Putting Theorems~\ref{theo:smooth} and~\ref{theo:quiver-cm} together, one gets:

\bigskip

\begin{coro}\label{coro:smooth:gl1n}
The variety $\ZCB_{\! c}$ is smooth if and only if 
$$ a \prod_{\substack{0 \le i \neq j \le l-1 \\ 0 \le r \le n-1}} (k_i-k_j - r a) \neq 0.$$
\end{coro}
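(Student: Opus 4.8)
The plan is to combine the isomorphism of Theorem~\ref{theo:quiver-cm} with the smoothness criterion of Theorem~\ref{theo:smooth}. Since $\ZCB_{\! c} \simeq \XC_\th(n\d_l)$ and isomorphic varieties are simultaneously smooth, $\ZCB_{\! c}$ is smooth if and only if condition~(3) of Theorem~\ref{theo:smooth} holds for the datum $\th$ defined in~\eqref{eq:theta-k}. It therefore remains to translate that condition, which is expressed through $\Sigma(\th)$ and the consecutive partial sums $\th_i + \cdots + \th_j$, into the stated product over the $k_i$'s and $a$.

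First I would compute $\Sigma(\th)$. Rewriting~\eqref{eq:theta-k} uniformly as $\th_p = k_{-p} - k_{1-p} - a\,\d_{p,0}$ for all $p \in \ZM/l\ZM$, the sum $\sum_{p \in \ZM/l\ZM}(k_{-p}-k_{1-p})$ telescopes to $0$, so that $\Sigma(\th) = -a$. Next, for $1 \le i \le j \le l-1$ none of the indices involved equals $0$, hence $\th_p = k_{-p} - k_{-(p-1)}$ on this range and the partial sum telescopes to $\th_i + \th_{i+1} + \cdots + \th_j = k_{-j} - k_{1-i}$. Substituting these two computations into Theorem~\ref{theo:smooth}(3) (and renaming its summation index, which is called $k$ there, to $r$ to avoid the clash with the $k_i$'s), the smoothness of $\ZCB_{\! c}$ becomes equivalent, after absorbing the harmless sign of $\Sigma(\th)=-a$, to
$$a \prod_{\substack{1 \le i \le j \le l-1 \\ -(n-1) \le r \le n-1}} \bigl(k_{-j} - k_{1-i} - r a\bigr) \neq 0.$$

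Finally I would match this with the product in the statement. Because $r$ runs symmetrically over $-(n-1) \le r \le n-1$, the vanishing of a factor $k_\alpha - k_\beta - ra$ depends only on the unordered pair $\{\alpha,\beta\}$; the same symmetry shows that the condition in the statement, namely ``$k_i - k_j - ra \neq 0$ for all $i \neq j$ and $0 \le r \le n-1$'', is equivalent to ``$k_\alpha - k_\beta \neq ra$ for all distinct $\alpha,\beta \in \ZM/l\ZM$ and all $|r| \le n-1$''. So it suffices to prove that, as $(i,j)$ runs over $1 \le i \le j \le l-1$, the unordered pairs $\{(1-i)\bmod l,\ (-j)\bmod l\}$ are exactly the $\binom{l}{2}$ pairs of distinct residues modulo $l$. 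Non-degeneracy is immediate, since equality of the two indices would force $j - i \equiv l-1 \pmod l$, which is impossible when $0 \le j-i \le l-2$. Surjectivity is a short direct check: given a pair, assign the larger representative to the $1-i$ slot (so that the resulting $i\le j$), handling the residue $0$ separately by taking $i=1$; a count of $\binom{l}{2}$ on each side then upgrades this to a bijection.

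I expect this last step — the explicit combinatorial identification of the two index sets — to be the only genuine bookkeeping obstacle; conceptually everything is forced once $\Sigma(\th) = -a$ and the telescoping formula $\th_i + \cdots + \th_j = k_{-j}-k_{1-i}$ are in hand.
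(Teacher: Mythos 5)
Your proof is correct and takes exactly the paper's route: the paper obtains Corollary~\ref{coro:smooth:gl1n} by simply ``putting Theorems~\ref{theo:smooth} and~\ref{theo:quiver-cm} together,'' leaving the translation of condition~(3) into the $k_i$'s entirely implicit. Your computation $\Sigma(\th)=-a$, the telescoping identity $\th_i+\cdots+\th_j=k_{-j}-k_{1-i}$, and the verification that the unordered pairs $\{(1-i)\bmod l,\,(-j)\bmod l\}$ exhaust the $\binom{l}{2}$ pairs of distinct residues correctly supply the bookkeeping the paper omits.
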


\bigskip

\begin{proof}[Proof of Theorem~\ref{theo:main}]
Assume now that $\ZCB_{\! c}$ is smooth and let $\XC$ be an irreducible component of 
$\ZCB_{\! c}^{\mub_m}$. Using the isomorphism of Theorem~\ref{theo:quiver-cm}, we 
see that 
$$\ZCB_{\! c}^{\mub_m} \longiso \XC_\th(n\d_l)^{\mub_m}.$$
So, by Theorem~\ref{theo:fixed-quiver}, there exists $d \in \EC(k,l,n)_{\ne\vide}$ 
such that $\XC \simeq \XC_{\th[k]}(d)$. By Lemma~\ref{lem:orbites}, there exists 
$r \ge 0$ and $w \in W_{m}^\aff$ such that $w(d) = r\d_{m}$. So it follows from 
the isomorphism~(\ref{eq:iso-lusztig}) that
$$\XC \simeq \XC_{w(\th[k])}(r\d_{m})$$
(see also Lemma \ref{lem:descrX-if-simple} and its proof).
Using Theorem~\ref{theo:quiver-cm} in the other way, we see that there exists 
a complex valued function $c'$ on the set of conjugacy classes of reflections 
of $W'=G(m,1,r)$ such that $\XC_{w(\th[k])}(r\d_{m}) \simeq \ZCB_{\! c'}(W')$. 
Moreover, as the action of $W_m^\aff$ on $\CM^{\ZM/m\ZM}$ is linear, 
this implies that the map $c \mapsto c'$ is linear.

This proves almost every statement of Theorem~\ref{theo:main}, except that 
$W'$ can be realized as a reflection subquotient of $(V,W)$. For this, we need the following fact.

\medskip
\noindent{\bf Fact.} {\it We have $kr \le n$.} 

\medskip

Indeed, the element $d-r\d_m$ is in the $W^\aff_m$-orbit of $0$. Then Proposition \ref{prop:core=orbit0} 
implies that $d-r\d_m$ is a residue of some $m$-core. In particular, each coordinates of $d-r\d_m$ must be positive. 
Since the sum of the coordinates of $d$ is $nl$, this gives $nl-rkl\geqslant 0$. This implies $kr\leqslant n$.

Now, using the fact proved above, let $x$ denotes the matrix 
$$x=\diag(\underbrace{\z,1,\dots,1}_{\text{$k$ terms}})\cdot M_{(1,2,\dots,k)} \in \Gb\Lb_k(\CM),$$
where $M_{(1,2,\dots,k)}$ is the permutation matrix associated with the cycle $(1,2,\dots,k)$. Now, let $g$ 
be the matrix
$$g=\diag(\underbrace{x,\dots,x}_{\text{$r$ terms}},\Id_{\CM^{n-rk}}) \in G(l,1,n).$$
Let $V'$ denote the $\xi$-eigenspace of $g$, where $\xi$ is a primitive $m$-th root 
of unity such that $\xi^k=\z$. Then $C_W(g)$ acts on $V'$, $V'$ is of dimension $r$ and, 
if we denote by $K$ the kernel of this action, then $C_W(g)/K \simeq G(m,1,r)$. 
So $(V',G(m,1,r))$ is a reflection subquotient of $(V,W)$.
\end{proof}

\bigskip

\begin{rema}\label{rem:perm-k}
The formula \eqref{eq:theta-k} above yields a bijection between $\CM^{\ZM/l\ZM}$ and the set of $(l+1)$-tuples of 
complex numbers $(a,k_0,\ldots,k_{l-1})$ such that $k_0+k_1+\ldots+k_{l-1}=0$. 
Moreover, the $W^\aff_l$-action on $\CM^{\ZM/l\ZM}$ (see Section \ref{sub:action-weyl}) translates 
to the action on the $(l+1)$-tuples in the following way
$$
s_r(a,k_0,\ldots,k_{l-r},k_{l-r+1},\ldots,k_{l-1})=(a,k_0,\ldots,k_{l-r+1},k_{l-r},\ldots,k_{l-1}),\quad \mbox{ for }r=1,2,\ldots,l-1
$$
and
$$
s_0(a,k_0,k_1,k_2,\ldots,k_{l-1})=(a,k_1+a,k_0-a,k_2,\ldots,k_{l-1}).
$$
In particular, we see that the subgroup $W_l$ of $W^\aff_l$ acts by permutation of the 
parameters $k_0$, $k_1$,\dots, $k_{l-1}$.\finl
\end{rema}

\bigskip

The next result follows from~\cite{BST}, but we provide here a different proof (which 
works only in type $G(d,1,n)$).

\bigskip
\begin{coro}
A permutation of parameters $k_0,k_1,\ldots,k_{l-1}$ does not change the Calogero-Moser space $\ZCB_{\! c}$ 
(up to an isomorphism of algebraic varieties).
\end{coro}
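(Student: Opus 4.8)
The plan is to transport the problem to the quiver variety side via Theorem~\ref{theo:quiver-cm} and to realize a permutation of the $k_i$'s as the action of an element of the finite Weyl group $W_l=\mathfrak{S}_l$ on $\theta$. Concretely, write the parameter as $(a,k_0,\dots,k_{l-1})$ as in Remark~\ref{rem:perm-k}, so that $\ZCB_{\! c}\simeq \XC_\theta(n\d_l)$ with $\theta$ given by~\eqref{eq:theta-k}. By Remark~\ref{rem:perm-k}, the subgroup $W_l$ of $W_l^\aff$ acts on the parameters by permuting $k_0,\dots,k_{l-1}$ while fixing $a$, and the generators $s_1,\dots,s_{l-1}$ induce transpositions of the $k_i$'s that generate the full symmetric group on $(k_0,\dots,k_{l-1})$. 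Hence any permutation of $k_0,\dots,k_{l-1}$ is realized by some $w\in W_l$, the permuted parameter $c'$ corresponds to $w(\theta)$, and it suffices to prove that $\XC_{w(\theta)}(n\d_l)\simeq \XC_\theta(n\d_l)$.

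First I would record that $n\d_l$ is fixed by $W_l$: for $j\ne 0$ the formula defining $s_j(d)$ gives $d_j'=d_{j+1}+d_{j-1}-d_j$, since the term $\d_{j0}$ vanishes, so $s_j(n\d_l)=n\d_l$. Thus, as $w$ ranges over $W_l$, the dimension vector stays equal to $n\d_l$ and only $\theta$ moves inside its $W_l$-orbit. The natural tool is then the isomorphism~\eqref{eq:iso-lusztig}, which for each generator yields $\XC_{s_j(\theta)}(n\d_l)\simeq\XC_\theta(n\d_l)$ provided $\theta_j\ne 0$.

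The main obstacle is precisely the condition $\theta_j\ne 0$, exactly as in the proof of Lemma~\ref{lem:descrX-if-simple}. The observation that dissolves it is that, for $j\ne 0$, the coordinate $\theta_j=k_{-j}-k_{1-j}$ is the difference of the two parameters exchanged by $s_j$ (visible from~\eqref{eq:theta-k} together with Remark~\ref{rem:perm-k}); hence $\theta_j=0$ for a given tuple if and only if $s_j$ swaps two equal values, that is, if and only if $s_j$ fixes that tuple. Consequently, on the set of rearrangements of the multiset $\{k_0,\dots,k_{l-1}\}$, the only moves we are forbidden to use (those with $\theta_j=0$) are the ones that do nothing, i.e. the loops of the corresponding graph, whose removal does not affect connectedness. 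Since $W_l=\mathfrak{S}_l$ acts transitively on these rearrangements and is generated by $s_1,\dots,s_{l-1}$, the graph of admissible moves is connected. Writing $w$ as any word $s_{j_1}\cdots s_{j_r}$ and applying it to the starting tuple one generator at a time, each step either leaves the tuple unchanged (nothing to prove) or genuinely moves it, in which case $\theta_{j_i}\ne 0$ and~\eqref{eq:iso-lusztig} applies; composing these isomorphisms gives $\XC_{w(\theta)}(n\d_l)\simeq\XC_\theta(n\d_l)$. Translating back through Theorem~\ref{theo:quiver-cm} then yields $\ZCB_{\! c'}\simeq\ZCB_{\! c}$. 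I expect the only delicate point to be the bookkeeping identifying $\theta_j$ with the difference of the two swapped parameters at the boundary indices $j=1$ and $j=l-1$, which is settled by the explicit formulas of Remark~\ref{rem:perm-k}.
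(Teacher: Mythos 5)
Your proof is correct and follows essentially the same route as the paper's: transport the statement to the quiver side via Theorem~\ref{theo:quiver-cm}, use Remark~\ref{rem:perm-k} to realize a permutation of the $k_i$'s as the action of $w\in W_l$ on $\theta$, note that $W_l$ stabilizes $n\d_l$, and apply~\eqref{eq:iso-lusztig}. Your explicit treatment of the condition $\theta_j\ne 0$ (observing that for $j\ne 0$ the coordinate $\theta_j$ is the difference of the two parameters swapped by $s_j$, so any forbidden step is a no-op and can be skipped) fills in a detail that the paper's one-line proof leaves implicit, and it is correct.
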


\bigskip
\begin{proof}
The subgroup $W_l$ of $W^\aff_{l}$ stabilizes $\delta_l$. Then by \eqref{eq:iso-lusztig}, for each 
$w\in W_l$, $n\in \ZM_{>0}$ and $\theta\in \CM^{\ZM/l\ZM}$, we have $\XC_\th(n\d_l)\simeq \XC_{w(\th)}(n\d_l)$. 
This proves the statement by Theorem \ref{theo:quiver-cm} and Remark \ref{rem:perm-k}
\end{proof}

\bigskip

\section{Combinatorics}

\medskip

In this subsection, we aim to make the 
statement of Theorem~\ref{theo:main} more precise in the case where $W=G(l,1,n)$: 
we wish to describe precisely the map $c \mapsto c'$ as well as the 
map $i_\XC^* : \Irr(W') \injto \Irr(W)$ in terms of the combinatorics of 
partitions, cores, $l$-quotients, etc.


\bigskip

\subsection{Partitions and cores}
Let $l$ and $n$ be positive integers. 
A \emph{partition} is a tuple $\lambda=(\lambda_1,\lambda_2,\dots,\lambda_r)$ of positive integers 
(with no fixed length) such that $\lambda_1\geqslant\lambda_2\geqslant\dots\geqslant\lambda_r$. 
Set $|\lambda|=\sum_{i=1}^r \lambda_i$. If $|\lambda|=n$, we say that $\lambda$ is a partition of $n$.

Denote by $\PC$ (resp. $\PC[n]$) be the set of all partitions (resp. the set of all partitions of $n$). 
By convention, $\PC[0]$ contains one (empty) partition. We will identify partitions with Young diagrams. 
The partition $\lambda$ corresponds to a Young diagram with $r$ lines such that the $i$th line contains $\lambda_i$ boxes. 
For example the partition $(4,2,1)$ corresponds to the Young diagram

$$
\yng(4,2,1)
$$

Let $b$ be a box of a Young diagram in the line $r$ and column $s$. The $l$-\emph{residue} of the box $b$ is the 
number $s-r$ modulo $l$. (We also say that the integer $s-r$ is the $\infty$-residue of the box $b$). Then we obtain a map 
$$
\Res_l\colon \PC\to \ZM^{\ZM/l\ZM},\qquad \lambda\mapsto \Res_l(\lambda),
$$ 
such that for each $i\in \ZM/l\ZM$ the number of boxes with residue $i$ in $\lambda$ is $(\Res_l(\lambda))_i$. 
(Similarly, we obtain a map $\Res_\infty\colon \PC\to \ZM^{\ZM}$.)

\bigskip

\begin{exemple}
For the partition $\lambda=(4,2,1)$ and $l=3$ the residues of the boxes are
$$
\young(0120,20,1)
$$
In this case we have $\Res_l(\lambda)=(3,2,2)$ because there are three boxes with residue $0$, two boxes with residue 
$1$ and two boxes with residue $2$.\finl
\end{exemple}

\bigskip

We say that a box of a Young diagram is \emph{removable} if it has no box on the right and on the bottom. 
For $\lambda,\mu\in\PC$, we write $\mu\leqslant \lambda$ if the Young diagram of $\mu$ can be obtained from 
the Young diagram of $\lambda$ by removing a sequence of removable boxes.

\bigskip

\begin{defi}
We say that the partition $\lambda$ is an $l$-core if there is no partition $\mu\leqslant\lambda$ such that 
the Young diagram of $\mu$ differs from the Young diagram of $\lambda$ by $l$ boxes with $l$ different $l$-residues.
\end{defi}

\bigskip

See \cite{BJV} for more details about the combinatorics of $l$-cores. Let $\CC_l\subset \PC$ be the set of $l$-cores. 
Set $\CC_l[n]=\PC[n]\cap \CC_l$.

If a partition $\lambda$ is not an $l$-core, then we can get a smaller Young diagram from its Young diagram by 
removing $l$ boxes with different $l$-residues. We can repeat this operation again and again until we get an $l$-core. 
It is well-known that the $l$-core that we get is independent of the choice of the boxes that we remove. Then we get an application
$$
\Core_l\colon \PC\to \CC_l.
$$
If $\mu=\Core_l(\lambda)$, we will say that the partition $\mu$ is {\it the $l$-core} of the partition $\lambda$.

\bigskip

\begin{exemple}
The partition $(4,2,1)$ from the previous example is not a $3$-core because it is possible to remove three bottom boxes. We get 
$$
\young(0120)
$$
But this is still not a $3$-core because we can remove three more boxes and we get 
$$
\young(0)
$$
This shows that the partition $(1)$ is the $3$-core of the partition $(4,2,1)$.\finl
\end{exemple}

\bigskip

As mentioned in Lemma \ref{lem:orbites}, for each $d\in \ZM^{\ZM/l\ZM}$, the $W^\aff_l$-orbit of $d$ 
contains an element of the form $n\delta_l$ for a unique $n\in \ZM$.

\bigskip
\begin{prop}
\label{prop:core=orbit0}
Let $d$ be an element of $\ZM^{\ZM/l\ZM}$. The following statements are equivalent.
\begin{itemize}
\item[(a)] $d$ is of the form $d=\Res_l(\lambda)$ for some $l$-core $\lambda$,
\item[(b)] $d$ is in the $W^\aff_l$-orbit of $0$.
\end{itemize}
\end{prop}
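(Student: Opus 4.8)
The plan is to encode partitions by their beta-numbers (an abacus display) and to show that under this encoding the box-adding action of $W_l^\aff$ on $l$-cores is intertwined by $\Res_l$ with the twisted action of $W_l^\aff$ on $\ZM^{\ZM/l\ZM}$ introduced in Section~\ref{sub:action-weyl}. Concretely, I would represent a partition $\lambda$ by the set $S_\lambda=\{\lambda_i-i+\frac12\mid i\ge 1\}\subset\ZM+\frac12$, which contains all sufficiently negative half-integers and omits all sufficiently positive ones. A box of content $c$ (hence of $l$-residue $c\bmod l$) is addable to $\lambda$ exactly when $c-\frac12\in S_\lambda$ and $c+\frac12\notin S_\lambda$, and removable in the reverse situation; and $\lambda$ is an $l$-core precisely when, on each arithmetic progression with common difference $l$ (each \emph{runner}), the elements of $S_\lambda$ form a down-interval, i.e.\ all beads are pushed up as far as possible.

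The heart of the argument is the following assertion for an $l$-core $\lambda$ with $d=\Res_l(\lambda)$ and $j\in\ZM/l\ZM$. Writing $A_j$ (resp.\ $R_j$) for the number of addable (resp.\ removable) $j$-boxes, a comparison of the two top-bead thresholds on the relevant pair of adjacent runners shows that $A_j$ and $R_j$ cannot both be positive, that
$$A_j-R_j=\delta_{j0}+d_{j+1}+d_{j-1}-2d_j,$$
and that the partition $s_j\cdot\lambda$ obtained by adding all addable $j$-boxes (if $A_j>0$), removing all removable $j$-boxes (if $R_j>0$), or doing nothing, is again an $l$-core: on the abacus this move simply exchanges the two adjacent runners, which preserves the pushed-up condition. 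Since it alters only the $j$-th coordinate of the residue vector, by exactly $A_j-R_j$, the displayed identity gives $\Res_l(s_j\cdot\lambda)=s_j(\Res_l(\lambda))$ for the twisted action (compatibly with Remark~\ref{rem:twisted_action}, in which $\delta_{j0}+d_{j+1}+d_{j-1}-2d_j=\langle\Lambda_0-\sum_i d_i\alpha_i,\alpha_j^\vee\rangle$).

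With this intertwining established, both implications follow by induction. For (b)$\Rightarrow$(a), I would write an element $d$ of the orbit of $0$ as $d=s_{j_1}\cdots s_{j_r}(0)$; since $0=\Res_l(\varnothing)$ and $\varnothing$ is an $l$-core, repeated application of the key step exhibits $d$ as $\Res_l(\lambda)$ for an $l$-core $\lambda$. For (a)$\Rightarrow$(b), I would induct on $|\lambda|$: a nonempty core has a removable box, of residue $j$ say, so $R_j>0$ and hence $A_j=0$; then $\mu:=s_j\cdot\lambda$ is a strictly smaller $l$-core with $\Res_l(\mu)=s_j(\Res_l(\lambda))$, and by the induction hypothesis $\Res_l(\mu)$ lies in the orbit of $0$, whence so does $\Res_l(\lambda)=s_j(\Res_l(\mu))$.

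The main obstacle is the key step, and specifically the abacus bookkeeping: one must check carefully that for an $l$-core the addable and removable $j$-boxes are genuinely one-sided, that the runner-exchange preserves the core condition, and that it produces exactly the coordinate change $\delta_{j0}+d_{j+1}+d_{j-1}-2d_j$. These are standard facts in the combinatorics of cores (see~\cite{BJV}), but they carry all the content; once they are pinned down, the two inductions are routine.
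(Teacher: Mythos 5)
Your proof is correct and takes essentially the same route as the paper: both arguments rest on the $W^\aff_l$-action on $l$-cores given by adding/removing all boxes of a fixed residue (the runner exchange on the abacus) together with the fact that $\Res_l$ intertwines this action with the twisted action on $\ZM^{\ZM/l\ZM}$. The only difference is one of packaging: the paper cites \cite{BJV} for this action and deduces the statement from equivariance, transitivity and the stabilizer comparison, whereas you verify the same key facts directly on the abacus and replace the orbit argument by two routine inductions.
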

\bigskip
\begin{proof}
Consider the $W^\aff_l$-action on $\CC_l$ as in \cite[Sec.~3]{BJV}. By construction, the map 
$\Res_l\colon \CC_l\to \ZM^{\ZM/l\ZM}$ is $W^\aff_l$-invariant. Moreover, the residue of the empty partition is zero. 
The stabilizer of the empty partition in $W^\aff_l$ is $W_l$ and the stabilizer of $0\in\ZM^{\ZM/l\ZM}$ in  $W^\aff_l$ is also $W_l$. 
This shows that the map $\Res_l$ yields a bijection between the set of $l$-cores and the $W^\aff_l$-orbit of $0$ in $\ZM^{\ZM/l\ZM}$.
\end{proof}
\bigskip

Denote by $\pi$ the obvious surjection $\pi\colon\ZM^{\ZM/l\ZM}\to \ZM^{\ZM/l\ZM}/{\ZM\delta_l}$.
\begin{prop}
\label{prop-bij_cores-roots}
The chain of maps
$$\diagram
\CC_l \rrto^{\DS{\Res_l}}&& \ZM^{\ZM/l\ZM} \rrto^{\DS{\pi}} && \ZM^{\ZM/l\ZM}/{\ZM\delta_l}
\enddiagram
$$
yields a bijection between $\CC_l$ and $\ZM^{\ZM/l\ZM}/{\ZM\delta_l}$.
\end{prop}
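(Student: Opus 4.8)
The plan is to reduce the statement to a fact about a single $W_l^\aff$-orbit and then make that orbit completely explicit. By Proposition~\ref{prop:core=orbit0}, the map $\Res_l$ restricts to a bijection between $\CC_l$ and the $W_l^\aff$-orbit $O_0$ of $0$ in $\ZM^{\ZM/l\ZM}$. Hence it suffices to prove that $\pi$ restricts to a bijection $O_0 \longiso \ZM^{\ZM/l\ZM}/{\ZM\delta_l}$; the proposition then follows by composing the two bijections.

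Next I would describe $O_0$ explicitly. Using the presentation $W_l^\aff = W_l \ltimes R_l$ recalled before Lemma~\ref{lem:affWeyl_translation-roots}, together with the fact that $W_l$ fixes $0$ (immediate from the formula for $s_j$ when $j \neq 0$, since then $s_j(0)=0$), every element of $W_l^\aff$ can be written as $t_\alpha w$ with $\alpha \in R_l$ and $w \in W_l$, whence $O_0 = \{t_\alpha(0) : \alpha \in R_l\}$. Moreover $\alpha \mapsto t_\alpha(0)$ is injective: if $t_\alpha(0)=t_{\alpha'}(0)$ then $t_{\alpha-\alpha'}$ lies in $\Stab_{W_l^\aff}(0)=W_l$ (identified in the proof of Proposition~\ref{prop:core=orbit0}), and $R_l \cap W_l = 1$ forces $\alpha = \alpha'$. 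Thus $\alpha \mapsto t_\alpha(0)$ is a bijection $R_l \longiso O_0$.

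Finally I would read off $\pi$ through this parametrization. By Lemma~\ref{lem:affWeyl_translation-roots}, $\pi(t_\alpha(0)) = -\alpha \bmod \ZM\delta_l$, so $\pi|_{O_0}$ is identified with the homomorphism $R_l \to \ZM^{\ZM/l\ZM}/{\ZM\delta_l}$, $\alpha \mapsto -\alpha \bmod \ZM\delta_l$. Under the identification $d \mapsto \sum_i d_i\alpha_i$ of Remark~\ref{rem:twisted_action}, the sublattice $R_l = \langle \alpha_1,\dots,\alpha_{l-1}\rangle$ is $\{d \in \ZM^{\ZM/l\ZM} : d_0 = 0\}$; hence $R_l \cap \ZM\delta_l = 0$ (a nonzero multiple of $\delta_l$ has nonzero $0$-th coordinate) and the homomorphism is injective, while it is surjective because any $d$ satisfies $d \equiv d - d_0\delta_l \bmod \ZM\delta_l$ with $d - d_0\delta_l \in R_l$. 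Therefore $\pi|_{O_0}$ is a bijection, and composing with $\Res_l$ finishes the proof.

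The argument is short once the right normal form is chosen, and I expect the only point requiring genuine care to be the interaction with the nonstandard (twisted) action of Remark~\ref{rem:twisted_action}: one must ensure that it really is the translation part $R_l$ that sweeps out $O_0$, and that Lemma~\ref{lem:affWeyl_translation-roots} is applied to this twisted action. This is precisely why verifying $W_l(0)=0$ and using the $t_\alpha w$ normal form (rather than $w\,t_\alpha$) is the load-bearing step.
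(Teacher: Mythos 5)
Your proof is correct. Its first step coincides with the paper's: both reduce the statement, via Proposition~\ref{prop:core=orbit0}, to the claim that $\pi$ restricts to a bijection from the $W_l^\aff$-orbit $O_0$ of $0$ onto $\ZM^{\ZM/l\ZM}/\ZM\delta_l$ (and, like the paper, you in fact need the \emph{proof} of that proposition, since the statement alone gives only that the image of $\Res_l$ on $\CC_l$ is $O_0$, not injectivity). Where you genuinely diverge is in how the second claim is established. The paper disposes of it in one sentence by invoking Lemma~\ref{lem:orbites}; you bypass that lemma entirely, using the normal form $t_\alpha w$ (with $\alpha\in R_l$, $w\in W_l$, and $w(0)=0$) to parametrize $O_0$ bijectively by $R_l$, and then Lemma~\ref{lem:affWeyl_translation-roots} to identify $\pi|_{O_0}$ with $\alpha\mapsto -\alpha \bmod \ZM\delta_l$, which is bijective because of the lattice splitting $\ZM^{\ZM/l\ZM}=R_l\oplus\ZM\delta_l$. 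The two routes run on the same fuel (the paper's proof of Lemma~\ref{lem:orbites} itself uses the $t_\alpha w$ form and Lemma~\ref{lem:affWeyl_translation-roots}), but your inlined version is arguably more complete: to extract injectivity of $\pi|_{O_0}$ from the bare statement of Lemma~\ref{lem:orbites} one still needs the unstated observation that the twisted action commutes with translation by $\ZM\delta_l$, so that $d\in O_0$ and $d+r\delta_l\in O_0$ put $r\delta_l$ in the orbit of $0$; your explicit parametrization makes that glue unnecessary. The cost is length, the benefit a self-contained argument that exhibits the structural reason for the bijection.
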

\bigskip
\begin{proof}
We have seen in the proof of Proposition \ref{prop:core=orbit0} that $\Res_l$ yields a bijection 
between $\CC_l$ and the $W^\aff_l$-orbit of $0\in \ZM^{\ZM/l\ZM}$. This proves the statement because the 
restriction of $\pi$ to the $W^\aff_l$-orbit of $0\in \ZM^{\ZM/l\ZM}$ is bijective by Lemma \ref{lem:orbites}.
\end{proof}

\bigskip

For $d=(d_i)_{i\in \ZM/l\ZM}\in \ZM^{\ZM/l\ZM}$ we set $|d|=\sum_{i\in \ZM/l\ZM}d_i$. 
Set $\ZM^{\ZM/l\ZM}[n]=\{d\in \ZM^{\ZM/l\ZM}; |d|=n \}$. Denote by  $\CC_l[\equiv n]$ the 
subset of $\CC_l$ that contains only $l$-cores with the number of boxes congruent to $n$ modulo $l$.
Proposition \ref{prop-bij_cores-roots} yields the following bijection 
$\varepsilon\colon \ZM^{\ZM/l\ZM}[n] \longbij \CC_l[\equiv n]$.

\bigskip

\begin{lem}
\label{lem:+_equiv_<=n}
Assume $d\in \ZM^{\ZM/l\ZM}[n]$. Then $d$ is in $(\ZM^{\ZM/l\ZM})_+$ if and only if $|\varepsilon(d)|\leqslant n$.
\end{lem}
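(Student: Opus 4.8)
The plan is to pin down, for $d\in\ZM^{\ZM/l\ZM}[n]$, the unique integer $n'$ furnished by Lemma~\ref{lem:orbites} (the one with $d$ and $n'\delta_l$ in the same $W_l^\aff$-orbit) and to show that $n'=(n-|\varepsilon(d)|)/l$. Since $d\in(\ZM^{\ZM/l\ZM})_+$ means exactly that $n'\geqslant 0$, and $l>0$, this identity immediately yields the desired equivalence $d\in(\ZM^{\ZM/l\ZM})_+\iff|\varepsilon(d)|\leqslant n$.

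To compute $n'$, I would set $\lambda=\varepsilon(d)$. By the construction of $\varepsilon$ from Proposition~\ref{prop-bij_cores-roots}, $\lambda$ is the unique $l$-core with $\Res_l(\lambda)\equiv d \bmod \ZM\delta_l$, so one may write $d=\Res_l(\lambda)+c\delta_l$ for a well-defined $c\in\ZM$. Comparing coordinate sums, and using that $|\Res_l(\lambda)|$ equals the number $|\lambda|$ of boxes of $\lambda$ while $|\delta_l|=l$, gives $n=|\lambda|+cl$, hence $c=(n-|\lambda|)/l$ (this is consistent with $\lambda\in\CC_l[\equiv n]$, which forces $l\mid (n-|\lambda|)$). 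Next, Proposition~\ref{prop:core=orbit0} tells me that $\Res_l(\lambda)$ lies in the $W_l^\aff$-orbit of $0$, say $\Res_l(\lambda)=w(0)$ for some $w\in W_l^\aff$.

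The one point that needs genuine (if modest) care, and which I expect to be the main obstacle, is that translation by $\ZM\delta_l$ commutes with the twisted $W_l^\aff$-action on $\ZM^{\ZM/l\ZM}$. I would verify this on a generator $s_j$ straight from its defining formula: adding a constant $c$ to every coordinate of $d$ changes the new $j$-th coordinate by $c+c-c=c$ (from the terms $d_{j+1}+d_{j-1}-d_j$) and every other coordinate by $c$, so $s_j(d+c\delta_l)=s_j(d)+c\delta_l$; this then propagates to all of $W_l^\aff$ by induction on a reduced word. Conceptually this is forced by Remark~\ref{rem:twisted_action}, where $\delta_l$ corresponds to the $W_l^\aff$-invariant imaginary root $\delta\in\widehat\hG^*$, but the direct generator check is cleaner. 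Applying the commutation to $w(0)$ gives $w(c\delta_l)=w(0)+c\delta_l=\Res_l(\lambda)+c\delta_l=d$, so $d$ lies in the $W_l^\aff$-orbit of $c\delta_l$; the uniqueness clause of Lemma~\ref{lem:orbites} then forces $n'=c=(n-|\varepsilon(d)|)/l$. Hence $d\in(\ZM^{\ZM/l\ZM})_+$ iff $c\geqslant 0$ iff $|\varepsilon(d)|\leqslant n$, which is the assertion.
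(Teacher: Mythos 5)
Your proof is correct and follows essentially the same route as the paper's: write $d=\Res_l(\varepsilon(d))+c\delta_l$, invoke Proposition~\ref{prop:core=orbit0} to place $\Res_l(\varepsilon(d))$ in the $W_l^\aff$-orbit of $0$, compare coordinate sums to get $n=|\varepsilon(d)|+cl$, and conclude via the uniqueness statement of Lemma~\ref{lem:orbites}. The only difference is that you explicitly verify the commutation $s_j(d+c\delta_l)=s_j(d)+c\delta_l$, a point the paper leaves implicit when it passes from ``$\Res_l(\lambda)$ lies in the orbit of $0$'' to ``$d$ lies in the orbit of $r\delta_l$''; this is a worthwhile clarification but not a different argument.
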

\bigskip
\begin{proof}
By construction, $\varepsilon(d)$ is the unique $l$-core $\lambda\in \CC_l[\equiv n]$ such that $\Res_l(\lambda)$ is 
congruent to $d$ modulo $\d_l$. Write $d=\Res_l(\lambda)+r\delta_l$. Since by Proposition \ref{prop:core=orbit0}, 
the element $\Res_l(\lambda)$ is in the $W^\aff_l$-orbit of $0$, then $d$ is in the $W^\aff_l$-orbit of $r\delta_l$.  
Then, by definition, the element $d$ is in $(\ZM^{\ZM/l\ZM})_+$ if and only if $r\geqslant 0$. Now, let us count 
the sum of the coordinates in the sides of the equality $d=\Res_l(\lambda)+r\delta_l$. We obtain $n=|\lambda|+rl$. 
This shows that we have $r\geqslant 0$ if and only if $|\lambda|\leqslant n $.
\end{proof}

\bigskip

Let $\nu$ be an $l$-core.
Denote by $\PC_{l,\nu}$ the set of partitions with the $l$-core $\nu$. In particular we denote by $\PC_{l,\vide}$ 
the set of partitions with trivial $l$-core. Let $\PC^l$ be the set of $l$-partitions (i.e., the set of $l$-tuples of partitions). 
If $\l$ is a partition, we denote by $\b_l(\l)$ its {\it $l$-quotient} (see for example \cite[Sec.~2.2]{Leeuwen}). 

\bigskip

\begin{lem}
\label{lem:l-quot}
If $\nu$ is an $l$-core, then $\b_l$ restricts to a 
bijection $\beta_{l,\nu}\colon\PC_{l,\nu} \longbij \PC^l$.
\end{lem}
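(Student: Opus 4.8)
The plan is to recognize $\beta_{l,\nu}$ as a single fibre of the classical \emph{Littlewood decomposition}, the bijection
$$\PC \longbij \CC_l \times \PC^l, \qquad \lambda \mapsto (\Core_l(\lambda), \b_l(\lambda)).$$
Indeed, $\PC_{l,\nu}$ is by definition the fibre of the first projection over $\nu$, so once this global map is known to be a bijection, restricting to that fibre and projecting to the second factor yields exactly the claimed bijection $\beta_{l,\nu} : \PC_{l,\nu} \longbij \PC^l$. Thus everything reduces to establishing the global bijection, which I would do with James's $l$-runner abacus.

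First I would encode a partition $\lambda$ by a \emph{beta-set}: fixing an integer $N$ bounding the number of parts, the strictly decreasing sequence $\beta_i = \lambda_i + N - i$ (for $1 \le i \le N$) gives an $N$-element set of non-negative integers, which I display as $N$ beads on an abacus with $l$ runners, runner $j$ carrying the positions congruent to $j$ modulo $l$. The two basic facts to record are: (i) removing an $l$-hook from $\lambda$ corresponds to sliding a single bead one step up its runner, so it preserves the number $b_j$ of beads on each runner; and (ii) a configuration is an $l$-core exactly when no bead can slide up, i.e.\ when the beads on every runner are packed at the top. Reading, runner by runner, the partition recorded by the bead positions on runner $j$ produces the $l$-tuple $\b_l(\lambda) \in \PC^l$, while pushing all beads up produces $\Core_l(\lambda)$, whose data is precisely the bead counts $(b_0,\dots,b_{l-1})$.

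From this, injectivity and surjectivity are formal. The $l$-core $\nu$ fixes the counts $(b_j)_j$, and on each runner the placement of $b_j$ beads is in bijection with partitions having at most $b_j$ parts; hence, for fixed $\nu$, the configuration — and therefore $\lambda$ itself — is recovered uniquely from the tuple $\b_l(\lambda)$, and every tuple in $\PC^l$ arises by placing beads according to the prescribed shapes on the runners determined by $\nu$. The one delicate point, and the main obstacle, is the normalization: the per-runner counts $b_j$, and even the labelling of the runners, depend on the choice of $N$, and for a given $N$ only partitions $\lambda^{(j)}$ with at most $b_j$ parts are realized. I would handle this either by passing to the stable limit $N \to \infty$ (equivalently, working with charged Maya diagrams, each runner becoming a bi-infinite sequence encoding an honest partition and the charges encoding the core), where the quotient stabilizes and all of $\PC^l$ is covered, or simply by invoking the normalization fixed in~\cite[Sec.~2.2]{Leeuwen}, against which $\b_l$ was defined. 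In either case one must check that the reconstructed partition has $l$-core exactly $\nu$ and not merely the correct bead counts, which is immediate since pushing its beads up returns $\nu$.
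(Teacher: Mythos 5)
Your proposal is correct, but it is worth noting that the paper does not prove this lemma at all: it is stated as a classical fact, with the $l$-quotient $\b_l$ defined by reference to~\cite[Sec.~2.2]{Leeuwen}, where the bijectivity of the Littlewood decomposition $\lambda \mapsto (\Core_l(\lambda),\b_l(\lambda))$ is established. What you supply is the standard self-contained argument (James's abacus), and it is sound: the fibre-of-the-Littlewood-decomposition reformulation is exactly right, the two abacus facts you record (hook removal $=$ bead slide, core $=$ packed configuration) are the correct engine, and you correctly isolate the one genuinely delicate point, namely that the bead counts per runner and even the labelling of the runners depend on the normalization $N$, so that a finite abacus only realizes quotient components with boundedly many parts. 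Your two remedies (stabilizing via $N\to\infty$, i.e.\ charged Maya diagrams, or adopting the fixed normalization of~\cite{Leeuwen}) both close this gap; the second is preferable in the context of this paper, since the paper later uses van Leeuwen's conventions in an essential way (e.g.\ the compatibility of $\b_l$ with the affine Weyl group action via~\cite[Proposition~4.1.3]{Leeuwen} in the proof of Lemma~\ref{lem:fixes-same-lpart}), and a privately renormalized $\b_l$ could differ from the paper's by a permutation or transposition of the components of the quotient, which is precisely the kind of twist (cf.\ $\lambda \mapsto \lambda^\flat$ in Proposition~\ref{prop:compare-Gordon}) that matters elsewhere in the argument. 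So: your proof is correct and more informative than the paper's citation, at the cost of having to pin down conventions that the paper inherits for free from its reference.
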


\bigskip

\subsection{Description of $\EC(k,l,n)$ in terms of cores}
Let $k$ be a positive integer. Set $m=kl$. Set also $(\CC_m)_{l,\nu}=\PC_{l,\nu}\cap \CC_m$.


Recall that we have a bijection $\varepsilon\colon \ZM^{\ZM/m\ZM}[nl]\longbij \CC_m[\equiv nl]$.  Assume $d\in \ZM^{\ZM/m\ZM}[nl]$.

\bigskip

\begin{lem}
\label{lem:ndl_equiv_triv}
We have $d\in \ZM^{\ZM/m\ZM}[n\delta_l]$ if and only if the $l$-core of $\varepsilon(d)$ is trivial, where the 
set $\ZM^{\ZM/m\ZM}[n\delta_l]$ is as in Section \ref{sub:fixed}.
\end{lem}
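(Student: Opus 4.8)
The plan is to translate both sides of the asserted equivalence into conditions on a single ``folding'' map and then compare them. First I would introduce
$$\phi\colon \ZM^{\ZM/m\ZM}\longto \ZM^{\ZM/l\ZM},\qquad \phi(e)_i=\sum_{\substack{j\in\ZM/m\ZM\\ j\equiv i \bmod l}}e_j,$$
which sums the coordinates of $e$ over each residue class modulo $l$ (recall $m=kl$). By the definition of $\ZM^{\ZM/m\ZM}[n\delta_l]$ in Section~\ref{sub:fixed}, an element $d\in\ZM^{\ZM/m\ZM}[nl]$ lies in $\ZM^{\ZM/m\ZM}[n\delta_l]$ precisely when $\phi(d)=n\delta_l$; this is the left-hand condition. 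I would also record at once that $\phi$ preserves total sums, so $|\phi(d)|=|d|=nl$, and that $\phi(\delta_m)=k\delta_l$, since each class modulo $l$ contains exactly $k$ elements of $\ZM/m\ZM$.

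Next I would connect $\phi$ with residues. Because $l\mid m$, the $l$-residue of a box is the reduction modulo $l$ of its $m$-residue, so summing $m$-residue multiplicities over classes modulo $l$ yields the $l$-residue multiplicities; thus $\phi\circ\Res_m=\Res_l$ on $\PC$. By the construction of $\varepsilon$ (exactly as in the proof of Lemma~\ref{lem:+_equiv_<=n}), the $m$-core $\lambda:=\varepsilon(d)$ is characterized by $\Res_m(\lambda)\equiv d \pmod{\ZM\delta_m}$. Writing $\Res_m(\lambda)=d+c\,\delta_m$ with $c\in\ZM$ and applying $\phi$ gives $\Res_l(\lambda)=\phi(d)+ck\,\delta_l$, hence $\Res_l(\varepsilon(d))\equiv\phi(d)\pmod{\ZM\delta_l}$. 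The essential point is that a congruence modulo the coarse lattice $\ZM\delta_m$ upstairs folds down to a congruence modulo $\ZM\delta_l$ downstairs, because $\phi(\ZM\delta_m)=k\ZM\delta_l\subseteq\ZM\delta_l$.

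Finally I would characterize a trivial $l$-core in residue terms: for any partition, $\Core_l(\lambda)=\vide$ if and only if $\Res_l(\lambda)\in\ZM\delta_l$. For this I would use that removing an $l$-rim hook deletes exactly one box of each $l$-residue (see~\cite{BJV}), so that $\Res_l(\Core_l(\lambda))\equiv\Res_l(\lambda)\pmod{\ZM\delta_l}$; since $\Core_l(\lambda)$ is an $l$-core, its residue lies in the $W^\aff_l$-orbit of $0$ by Proposition~\ref{prop:core=orbit0}, and by Lemma~\ref{lem:orbites} (equivalently Proposition~\ref{prop-bij_cores-roots}) this orbit meets each class modulo $\delta_l$ exactly once, the class of $0$ being represented by $0=\Res_l(\vide)$. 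Combining this with the previous paragraph, $\Core_l(\varepsilon(d))$ is trivial if and only if $\phi(d)\in\ZM\delta_l$, i.e. $\phi(d)=r\delta_l$ for some $r\in\ZM$; the sum constraint $|\phi(d)|=nl$ then forces $r=n$, so $\phi(d)=n\delta_l$, which is exactly $d\in\ZM^{\ZM/m\ZM}[n\delta_l]$.

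The step I expect to be the main obstacle is the careful bookkeeping in the second paragraph: keeping track of how the two lattices $\ZM\delta_m$ and $\ZM\delta_l$ interact under $\phi$, and pairing this with the residue-preservation property of $l$-hook removal. Once those are in place, the final rank/sum argument upgrading $\phi(d)\in\ZM\delta_l$ to $\phi(d)=n\delta_l$ is immediate.
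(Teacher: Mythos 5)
Your proof is correct and follows essentially the same route as the paper: the paper's argument is exactly the commutative triangle $\Res_l=\phi\circ\Res_m$ (its unlabeled horizontal arrow is your folding map $\phi$), combined with the characterization of $\varepsilon(d)$ by $\Res_m(\varepsilon(d))\equiv d \pmod{\ZM\delta_m}$ and the residue criterion for a trivial $l$-core coming from Proposition~\ref{prop:core=orbit0}. Your write-up just makes explicit the bookkeeping that the paper compresses into ``clearly'' --- namely that $\phi(\ZM\delta_m)\subset\ZM\delta_l$, that a trivial $l$-core is equivalent to $\Res_l(\lambda)\in\ZM\delta_l$, and that the total-sum constraint upgrades $\phi(d)\in\ZM\delta_l$ to $\phi(d)=n\delta_l$.
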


\bigskip
\begin{proof}
By definition, $\varepsilon(d)$ is the unique $m$-core $\lambda\in \CC_m[\equiv nl]$ such that $\Res_m(\lambda)$ is 
equal to $d$ modulo $\d_m$. On the other hand, by Proposition \ref{prop:core=orbit0}, the partition $\lambda$ is an $l$-core 
if and only if $\Res_l(\lambda)$ is a multiple of $\d_l$. By commutativity of the following diagram,
$$
\diagram    
& \PC  \dlto_{\DS{\Res_m}} \drto^{\DS{\Res_l}} &\\
  \ZM^{\ZM/m\ZM} \rrto & & \ZM^{\ZM/l\ZM} \\
\enddiagram
$$
this condition is clearly equivalent to $d\in \ZM^{\ZM/m\ZM}[n\delta_l]$.
\end{proof}

Recall that the set $\EC(k,l,n)$ was defined as the intersection $\EC(k,l,n)=\ZM^{\ZM/m\ZM}[n\delta_l]\cap (\ZM^{\ZM/m\ZM})_+$. 
Combining Lemmas \ref{lem:+_equiv_<=n} and \ref{lem:ndl_equiv_triv}, we obtain the following.

\bigskip
\begin{lem}
\label{lem:bij_eps'}
The bijection $\varepsilon\colon \ZM^{\ZM/m\ZM}[nl]\simeq \CC_m[\equiv nl]$ restricts to a bijection
$$
\varepsilon'\colon \EC(k,l,n)\to \CC_m[\equiv nl]\cap \CC_m[\leqslant nl]\cap (\CC_m)_{l,\vide}.
$$
\end{lem}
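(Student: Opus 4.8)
The plan is to obtain the statement as an immediate consequence of the two preceding lemmas: the map $\varepsilon'$ will be nothing but the restriction of $\varepsilon$, and the two conditions defining $\EC(k,l,n)$ will translate, under $\varepsilon$, into two of the three conditions cutting out the target set.

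First I would check that $\varepsilon$ is actually defined on $\EC(k,l,n)$, i.e.\ that $\EC(k,l,n) \subseteq \ZM^{\ZM/m\ZM}[nl]$. This is where a small verification is needed: if $d \in \ZM^{\ZM/m\ZM}[n\delta_l]$, then $\sum_{j \equiv i \bmod l} d_j = n$ for every $i \in \ZM/l\ZM$, so that $|d| = \sum_{i \in \ZM/l\ZM} n = nl$. Hence $\ZM^{\ZM/m\ZM}[n\delta_l] \subseteq \ZM^{\ZM/m\ZM}[nl]$, and a fortiori $\EC(k,l,n) \subseteq \ZM^{\ZM/m\ZM}[nl]$. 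In particular the restriction of $\varepsilon$ to $\EC(k,l,n)$ makes sense and is automatically injective.

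Next I would unwind the definition $\EC(k,l,n)=\ZM^{\ZM/m\ZM}[n\delta_l]\cap (\ZM^{\ZM/m\ZM})_+$ using the two lemmas, for $d \in \ZM^{\ZM/m\ZM}[nl]$. By Lemma~\ref{lem:ndl_equiv_triv}, the condition $d \in \ZM^{\ZM/m\ZM}[n\delta_l]$ is equivalent to $\varepsilon(d) \in (\CC_m)_{l,\vide}$. Applying Lemma~\ref{lem:+_equiv_<=n} with $m$ in the role of $l$ and $nl$ in the role of $n$ (legitimate precisely because $d \in \ZM^{\ZM/m\ZM}[nl]$), the condition $d \in (\ZM^{\ZM/m\ZM})_+$ is equivalent to $|\varepsilon(d)| \leqslant nl$, that is, $\varepsilon(d) \in \CC_m[\leqslant nl]$. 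Since $\varepsilon$ already takes values in $\CC_m[\equiv nl]$, these two equivalences combine to give: for $d \in \ZM^{\ZM/m\ZM}[nl]$, one has $d \in \EC(k,l,n)$ if and only if $\varepsilon(d) \in \CC_m[\equiv nl]\cap \CC_m[\leqslant nl]\cap (\CC_m)_{l,\vide}$.

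Finally, since $\varepsilon\colon \ZM^{\ZM/m\ZM}[nl] \longbij \CC_m[\equiv nl]$ is a bijection and the subset $\EC(k,l,n)$ of the source is, by the previous paragraph, exactly the $\varepsilon$-preimage of $\CC_m[\equiv nl]\cap \CC_m[\leqslant nl]\cap (\CC_m)_{l,\vide}$, the restriction of $\varepsilon$ yields a bijection of $\EC(k,l,n)$ onto that triple intersection; this is the desired $\varepsilon'$. I do not expect any genuine obstacle here: the argument is pure bookkeeping, and the only two points requiring care are the inclusion $\EC(k,l,n)\subseteq \ZM^{\ZM/m\ZM}[nl]$ established above and the correct matching of parameters ($l \leadsto m$, $n \leadsto nl$) when invoking Lemma~\ref{lem:+_equiv_<=n} in the $m$-graded setting.
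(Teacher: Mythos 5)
Your proof is correct and follows exactly the route the paper takes: the paper presents this lemma as an immediate consequence of combining Lemma~\ref{lem:+_equiv_<=n} (applied with $m$, $nl$ in place of $l$, $n$) and Lemma~\ref{lem:ndl_equiv_triv}, which is precisely your argument. Your explicit verification that $\ZM^{\ZM/m\ZM}[n\delta_l]\subseteq \ZM^{\ZM/m\ZM}[nl]$ is a point the paper leaves implicit, and it is done correctly.
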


\bigskip

For $\lambda=(\lambda^0,\dots,\lambda^{l-1})\in\PC^l$ we set $|\lambda|= |\lambda^0|+\dots+|\lambda^{l-1}|$. 
Set also $\PC^l[n]=\{\lambda\in\PC^l;~ |\lambda|=n\}$.
We say that an element of $\PC^l[n]$ is an $l$-partition of $n$.

Now recall the bijection $\beta_{l,\nu}\colon\PC_{l,\nu} \longbij \PC^l$ obtained by restriction from $\b_l$ 
as in Lemma \ref{lem:l-quot}. 
By construction, it has the following properties.

\bigskip
\begin{lem}
\label{lem:propetries-lquot}

\begin{itemize}
\itemth{a} For each $\lambda\in \PC_{l,\vide}$, we have $\lambda\in \CC_m$ if and only if $\beta_{l,\vide}(\lambda)\in (\CC_k)^l$.

\itemth{b} For each $\lambda\in \PC_{l,\nu}$, we have $|\lambda|=l\cdot|\beta_{l,\nu}(\lambda)|+|\nu|$.
\end{itemize}



\end{lem}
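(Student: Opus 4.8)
The plan is to prove both statements at once through the James abacus model, encoding $\lambda$ by a beta-set of length divisible by $l$ displayed on an abacus with $l$ runners indexed by $\ZM/l\ZM$, so that a bead at position $p$ sits on runner $p \bmod l$ at level $\lfloor p/l\rfloor$. I would recall the two standard dictionary entries that drive the argument: first, the $l$-quotient $\b_l(\lambda)=(\lambda^0,\dots,\lambda^{l-1})$ is read off by interpreting each runner as a separate one-runner abacus; second, removing an $h$-hook from a partition amounts to sliding a single bead from a position $p$ to an empty position $p-h$ (James's rule). In this language the $l$-core $\nu$ is the configuration in which every bead has been slid up as far as it can go on its own runner, and one elementary slide (by one level on a fixed runner) removes exactly one $l$-hook. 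For the precise normalisation of the abacus and of $\b_l$ I would refer to \cite[Sec.~2.2]{Leeuwen}.

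For part (b), I would simply count the elementary slides carrying $\lambda$ to its $l$-core. Sliding a bead up one level on runner $i$ decreases $|\lambda^i|$ by one, and the core is reached precisely when all the $\lambda^i$ are empty; hence the total number of slides equals $\sum_i|\lambda^i|=|\b_l(\lambda)|=|\beta_{l,\nu}(\lambda)|$. Since each slide removes $l$ boxes, this yields $|\lambda|-|\nu|=l\,|\beta_{l,\nu}(\lambda)|$, which is the claimed identity. This part is pure bookkeeping.

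For part (a), the key observation is that because $m=kl$, the positions $p$ and $p-m$ lie on the \emph{same} runner $p\bmod l$. Consequently every removal of an $m$-hook from $\lambda$ is realised by sliding one bead up by exactly $k$ levels on a single runner $i$ into an empty slot, and by James's rule applied to that runner this is precisely the removal of a $k$-hook from the one-runner partition $\lambda^i$; conversely, any $k$-hook removal inside some $\lambda^i$ lifts to an $m$-hook removal in $\lambda$. Therefore $\lambda$ admits an $m$-hook if and only if some component $\lambda^i$ admits a $k$-hook, and negating this gives $\lambda\in\CC_m$ if and only if $\lambda^0,\dots,\lambda^{l-1}\in\CC_k$, i.e. $\beta_{l,\vide}(\lambda)\in(\CC_k)^l$. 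The restriction to $\PC_{l,\vide}$ plays no role beyond fixing the domain of $\beta_{l,\vide}$.

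The main obstacle, though a mild one, is to make the hook/slide dictionary fully rigorous and independent of the chosen beta-set: one must confirm that sliding a bead up $m$ positions into an empty slot genuinely removes an $m$-hook even when intermediate positions are occupied (this is exactly James's rule, which I would invoke both globally on $\lambda$ and runner-wise on $\lambda^i$), and that enlarging the beta-set changes neither the $l$-quotient nor the count used in (b). Both are classical facts about the abacus, so I would isolate them as a short preliminary recollection with a citation to \cite{Leeuwen}, after which (a) and (b) follow formally as above.
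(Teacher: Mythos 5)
Your proof is correct, but it cannot be compared to a proof in the paper because the paper offers none: the lemma is stated as holding ``by construction'' of the bijection $\beta_{l,\nu}$, with the underlying combinatorics delegated to \cite[Sec.~2.2]{Leeuwen}. Your abacus argument supplies precisely the classical facts being invoked there, and it works: for (b), each unit slide on a runner removes one rim $l$-hook from $\lambda$ and one box from the corresponding component of the quotient, so reaching the core takes exactly $|\b_l(\lambda)|$ slides, giving $|\lambda|-|\nu|=l\,|\beta_{l,\nu}(\lambda)|$; for (a), since $m=kl$, a bead move by $m$ positions stays on its runner, so removable rim $m$-hooks of $\lambda$ correspond bijectively to removable rim $k$-hooks of the components $\lambda^i$, whence $\lambda\in\CC_m$ iff $\beta_{l,\vide}(\lambda)\in(\CC_k)^l$ (and, as you note, the hypothesis $\nu=\vide$ is not actually needed for this equivalence). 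The one point you should make explicit in your preliminary recollection is that you use throughout the rim-hook (abacus) characterization of cores, whereas the paper defines an $l$-core by the impossibility of removing $l$ boxes with $l$ distinct $l$-residues; the two notions coincide by a classical fact (both removal operations change $\Res_l$ by $\delta_l$, and the residue characterization of cores, available in the paper via Proposition~\ref{prop:core=orbit0} and \cite{BJV}, identifies the minimal elements of the two orders). With that bridge recorded, your argument is a complete and standard justification of the lemma, and arguably a useful addition, since the paper leaves these properties unproved.
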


\bigskip

Part (a) of the lemma above shows that $\beta_{l,\vide}$ can be restricted to a bijection $(\CC_m)_{l,\vide}\simeq (\CC_k)^l$. 
Moreover, part (b) shows that new bijection restricts to a bijection 
$$
\CC_m[\equiv nl]\cap \CC_m[\leqslant nl]\cap (\CC_m)_{l,\vide}\simeq (\CC_k)^l[\equiv n,\leqslant n],
$$
where
$$
(\CC_k)^l[\equiv n,\leqslant n]=\{\lambda\in (\CC_k)^l;~|\lambda |\leqslant n ~\text{and} ~ |\lambda |\equiv n\mod k\}.
$$

Combining this with Lemma \ref{lem:bij_eps'}, we get a bijection
$$
\d \colon \EC(k,l,n)\to (\CC_k)^l[\equiv n,\leqslant n].
$$

\bigskip

\subsection{Parametrization of the $\mub_m$-fixed points}

We have proved in Theorem~\ref{theo:fixed-quiver} that the subset $\EC(k,l,n)_{\ne\vide}$ of $\EC(k,l,n)$ 
parametrizes the irreducible components of $\XC_\th(n\d_l)^{\mub_m}$. (But we will show in 
Corollary~\ref{coro:always-not-empty} that we have $\EC(k,l,n)_{\ne\vide}=\EC(k,l,n)$.) Then the bijection $\d$ 
above gives another parametrization of the irreducible components of $\XC_\th(n\d_l)^{\mub_m}$ in terms of 
$(\CC_k)^l[\equiv n,\leqslant n]$. For each $\lambda\in (\CC_k)^l[\equiv n,\leqslant n]$, we set $\XC_\lambda=\XC_{\d^{-1}(\lambda)}$. 

Recall the map $\Core_k \colon \PC\to \CC_k$. It yields a map $\Core_k \colon \PC^l\to (\CC_k)^l$ (component by component).  
Note that the set $(\CC_k)^l[\equiv n,\leqslant n]$ considered above is nothing else but the image of $\PC^l[n]$ by $\Core_k$. 
In other words, the set $(\CC_k)^l[\equiv n,\leqslant n]$ is the same thing as the set of $k$-cores of $l$-partitions of $n$. 

This new parametrization has the following nice property.

\bigskip
\begin{lem}
\label{lem:param-compatible}
Fix positive integers $k_1$ and $k_2$ such that $k_1$ divides $k_2$. Set $m_1=k_1l$ and $m_2=k_2l$. 
Fix $\lambda_1\in (\CC_{k_1})^l[\equiv n,\leqslant n]$ and $\lambda_2\in(\CC_{k_2})^l[\equiv n,\leqslant n]$. 
Then we have $\XC_{\lambda_2}\subset \XC_{\lambda_1}$
if and only if $\Core_{k_1}(\lambda_2)=\lambda_1$.
\end{lem}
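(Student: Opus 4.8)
The plan is to prove the lemma in two stages: a geometric one, identifying the inclusion of fixed-point components with an explicit ``folding'' of dimension vectors, and a combinatorial one, matching that folding with $\Core_{k_1}$ under the bijection $\d$. Throughout write $d_i=\d^{-1}(\lambda_i)$, so that $\XC_{\lambda_i}=\XC_{d_i}$, and set $e=k_2/k_1$, so that $m_2=em_1$ and $\mub_{m_1}\subset\mub_{m_2}$, whence $\XC_\th(n\d_l)^{\mub_{m_2}}\subset \XC_\th(n\d_l)^{\mub_{m_1}}$.

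\emph{Geometric stage.} By Theorem~\ref{theo:fixed-quiver}(a) both fixed loci are smooth, so their irreducible components are their connected components and are pairwise disjoint; in particular the connected piece $\XC_{d_2}$ is contained in exactly one $\XC_{d_1}$, and the task is to identify that unique $d_1$ in terms of $d_2$. To do so I would revisit the construction in the proof of Theorem~\ref{theo:fixed-quiver}. Fix primitive roots $\xi_2$ of order $m_2$ and $\xi_1=\xi_2^{e}$ of order $m_1$. A point of $\XC_{d_2}$ has a representative $(X,Y)$ together with the element $g_2\in\Gb\Lb(n\d_l)$, uniquely determined up to conjugacy by $(X,Y)$ (the uniqueness is Fact~2 in the proof of Theorem~\ref{theo:fixed-quiver}), satisfying $\lexp{g_2}{(X,Y)}=(\xi_2^{-1}X,\xi_2 Y)$ and $g_2^{k_2}=g_0$. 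Then $g_1:=g_2^{e}$ satisfies $\lexp{g_1}{(X,Y)}=(\xi_1^{-1}X,\xi_1 Y)$ and $g_1^{k_1}=g_2^{k_2}=g_0$, so $g_1$ is precisely the element attached to the same point at the coarser level $m_1$. Since the $\xi_1^{j}$-eigenspace of $g_1=g_2^{e}$ is the sum of the $\xi_2^{a}$-eigenspaces of $g_2$ over all $a\equiv j \bmod m_1$, the dimension vectors are related by $d_1=\phi(d_2)$, where $\phi\colon\ZM^{\ZM/m_2\ZM}\to\ZM^{\ZM/m_1\ZM}$ is the pushforward along the reduction $\ZM/m_2\ZM\twoheadrightarrow\ZM/m_1\ZM$, i.e. $\phi(d)_j=\sum_{a\equiv j \bmod m_1}d_a$. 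Hence $\XC_{d_2}\subset\XC_{d_1}$ if and only if $d_1=\phi(d_2)$.

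\emph{Combinatorial stage.} It remains to show that, through $\d=\beta_{l,\vide}\circ\varepsilon$, the map $\phi$ corresponds to $\Core_{k_1}$. Let $\mu_2=\varepsilon(d_2)\in\CC_{m_2}$ and $\mu_1=\varepsilon(\phi(d_2))\in\CC_{m_1}$ be the associated cores. Because $m_1\mid m_2$, the $m_1$-residue of a box is the reduction modulo $m_1$ of its $m_2$-residue, so $\Res_{m_1}=\phi\circ\Res_{m_2}$ on all partitions; since $\phi(\d_{m_2})=e\,\d_{m_1}$, this yields $\Res_{m_1}(\mu_2)=\phi(\Res_{m_2}(\mu_2))\equiv\phi(d_2)\equiv\Res_{m_1}(\mu_1)\pmod{\d_{m_1}}$. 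Removing an $m_1$-rim hook changes $\Res_{m_1}$ by $\d_{m_1}$ and preserves the $l$-core (as $l\mid m_1$ one has $\Core_l\circ\Core_{m_1}=\Core_l$), so $\Core_{m_1}(\mu_2)$ is an $m_1$-core, with trivial $l$-core, whose $m_1$-residue is congruent to that of $\mu_1$ modulo $\d_{m_1}$; by the uniqueness in Proposition~\ref{prop-bij_cores-roots} I conclude $\mu_1=\Core_{m_1}(\mu_2)$ (this also re-proves that $\phi(d_2)\in\EC(k_1,l,n)$). Finally, the $l$-quotient intertwines removal of an $m_1=k_1l$-rim hook with removal of a single $k_1$-rim hook in one component, so $\beta_{l,\vide}(\Core_{k_1 l}(\mu_2))=\Core_{k_1}(\beta_{l,\vide}(\mu_2))$ componentwise. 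Therefore $\lambda_1=\beta_{l,\vide}(\mu_1)=\Core_{k_1}(\lambda_2)$; as $\d$ is a bijection, this proves that $d_1=\phi(d_2)$ is equivalent to $\lambda_1=\Core_{k_1}(\lambda_2)$, and combining with the geometric stage gives the lemma.

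\emph{Main obstacle.} The delicate point is the geometric stage: one must verify that $g_2^{e}$ really is the canonical element attached to the point at the coarser level (invoking the uniqueness of Fact~2) and carry out the eigenspace bookkeeping so that the resulting dimension vector is exactly $\phi(d_2)$. The combinatorial compatibility of the $l$-quotient with $(k_1l)$-rim-hook removal is classical (see~\cite{Leeuwen}), but it is exactly what turns the folding $\phi$ into $\Core_{k_1}$.
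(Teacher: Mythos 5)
Your proof is correct and is essentially the paper's own argument: the paper introduces the same folding map $p$ (your $\phi$), states that $\XC_{d^2}\subset\XC_{d^1}$ if and only if $p(d^2)=d^1$ as clear from the definition, and then appeals to the commutativity of the diagram relating $p$, $\Core_{m_1}$ and $\Core_{k_1}$ through the bijections $\varepsilon$ and $\beta_{l,\vide}$ --- precisely your geometric and combinatorial stages. The only difference is that you make explicit the verifications (Fact~2 and eigenspace bookkeeping for the geometric step, residue and rim-hook/quotient compatibilities for the combinatorial one) that the paper leaves to the reader.
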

\bigskip
\begin{proof}
Consider the map 
$$
p\colon \ZM^{\ZM/{m_2}\ZM}\to \ZM^{\ZM/{m_1}\ZM}, \qquad p(d)_i=\sum_{j\equiv i \mod m_1}d_j.
$$
Let $d^1\in\EC(k_1,l,n)$ and $d^2\in\EC(k_2,l,n)$ be such that $\d(d^1)=\lambda_1$ and $\d(d^2)=\lambda_2$. 
We have $\XC_{d^1}=\XC_{\lambda_1}$ and $\XC_{d^2}=\XC_{\lambda_2}$. It is clear from the definition that we have 
$\XC_{d^2}\subset \XC_{d^1}$ if and only if $p(d^2)=d^1$. Then the statement follows from the commutativity of 
the following diagram
$$
\diagram
\EC(k_2,l,n) \rrto^{\DS{p}} \dto && \EC(k_1,l,n)\dto \\
\CC_{m_2}[\equiv nl]\cap \CC_{m_2}[\leqslant nl]\cap (\CC_{m_2})_{l,\vide} \dto \rrto^{\DS{\Core_{m_1}}} &&
\CC_{m_1}[\equiv nl]\cap \CC_{m_1}[\leqslant nl]\cap (\CC_{m_1})_{l,\vide} \dto \\
(\CC_{k_2})^l[\equiv n,\leqslant n] \rrto^{\DS{\Core_{k_1}}}  && (\CC_{k_1})^l[\equiv n,\leqslant n],
\enddiagram
$$
where the vertical maps are the bijections discussed above.
\end{proof}

\bigskip

\begin{prop}
\label{prop:param-Cfixed}
The $\CM^\times$-fixed points in $\XC_\th(n\d_l)$ are parametrized by the set $\PC^l[n]$. 
This parametrization $\XC_\th(n\d_l)^{\CM^\times}=\{p_\mu;~\mu\in \PC^l[n]\}$ can 
be chosen in such a way that for each $k\in \ZM$, each $\lambda\in (\CC_k)^l[\equiv n,\leqslant n]$ 
and each $\mu\in \PC^l[n]$ we have $p_\mu\in \XC_\lambda$ if and only if $\lambda$ is the $k$-core of $\mu$.
\end{prop}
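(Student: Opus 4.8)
The plan is to build the parametrization ``from the top'', using a very divisible $m$, and then to propagate the containment relation downward by means of the compatibility Lemma~\ref{lem:param-compatible}. Throughout, given an integer $k\geqslant 1$ and $\lambda\in(\CC_k)^l[\equiv n,\leqslant n]$, I write $\XC_\lambda^{(k)}$ for the component $\XC_\lambda$ computed with $m=kl$, so that Lemma~\ref{lem:param-compatible} reads $\XC_{\lambda_2}^{(k_2)}\subset\XC_{\lambda_1}^{(k_1)}$ iff $\Core_{k_1}(\lambda_2)=\lambda_1$, whenever $k_1\mid k_2$.

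First I would fix an integer $K>n$ and work at level $m=Kl$. Since $K>n$, every component of an element of $\PC^l[n]$ has strictly fewer than $K$ boxes, hence is automatically a $K$-core; a size-and-congruence count then gives $(\CC_K)^l[\equiv n,\leqslant n]=\PC^l[n]$. By Corollary~\ref{coro:big-m-C*} we have $\XC_\th(n\d_l)^{\mub_{Kl}}=\XC_\th(n\d_l)^{\CM^\times}$, so by Theorem~\ref{theo:fixed-quiver}(b) the sets $\XC_\lambda^{(K)}$ indexed by $\lambda\in(\CC_K)^l[\equiv n,\leqslant n]=\PC^l[n]$ giving a non-empty component are exactly the irreducible components of this finite set, i.e. single reduced points. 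On the other hand $\XC_\th(n\d_l)\simeq \ZCB_{\! c}$ by Theorem~\ref{theo:quiver-cm}, and \cite[Corollary~5.8]{gordon} gives $|\XC_\th(n\d_l)^{\CM^\times}|=|\Irr(G(l,1,n))|=|\PC^l[n]|$. Comparing this with $|\EC(K,l,n)_{\ne\vide}|\leqslant|\EC(K,l,n)|=|\PC^l[n]|$ forces every component to be non-empty and the assignment $\mu\mapsto \XC_\mu^{(K)}$ to be a bijection $\PC^l[n]\longiso\XC_\th(n\d_l)^{\CM^\times}$. I would then set $p_\mu:=\XC_\mu^{(K)}$.

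Next I would check that $p_\mu$ is independent of the auxiliary $K>n$. Given $K_1,K_2>n$, apply Lemma~\ref{lem:param-compatible} with $(k_1,k_2)=(K_1,K_1K_2)$ and then $(K_2,K_1K_2)$: as $\mu$ is a $K_1$-core and a $K_2$-core, one has $\Core_{K_1}(\mu)=\Core_{K_2}(\mu)=\mu$, whence $\XC_\mu^{(K_1K_2)}\subset\XC_\mu^{(K_1)}$ and $\XC_\mu^{(K_1K_2)}\subset\XC_\mu^{(K_2)}$. All three are single points (the three levels exceed $n$), so they coincide; hence $p_\mu$ is well defined.

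Finally, for an arbitrary $k\geqslant 1$ and $\lambda\in(\CC_k)^l[\equiv n,\leqslant n]$, I would choose $K>n$ divisible by $k$ and apply Lemma~\ref{lem:param-compatible} with $(k_1,k_2)=(k,K)$, $\lambda_1=\lambda$ and $\lambda_2=\mu$ (legitimate since $\mu\in(\CC_K)^l[\equiv n,\leqslant n]$). This yields $\XC_\mu^{(K)}\subset\XC_\lambda^{(k)}$ iff $\Core_k(\mu)=\lambda$; since $p_\mu=\XC_\mu^{(K)}$ is the single point computed at level $K$, the inclusion $\XC_\mu^{(K)}\subset\XC_\lambda^{(k)}$ is precisely $p_\mu\in\XC_\lambda^{(k)}$, giving the stated equivalence. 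I expect the main obstacle to be the bookkeeping of the second paragraph: one must simultaneously establish that all level-$K$ components are non-empty (that is, $\EC(K,l,n)_{\ne\vide}=\EC(K,l,n)$) and that the resulting labelling is the one coming from $\Irr(G(l,1,n))$, which is exactly what the cardinality comparison via \cite[Corollary~5.8]{gordon} delivers; the remaining steps are then formal consequences of the nesting encoded in Lemma~\ref{lem:param-compatible}.
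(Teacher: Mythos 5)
Your proof is correct and follows essentially the same route as the paper: reduce to $\mub_{Kl}$-fixed points for $K>n$ via Corollary~\ref{coro:big-m-C*}, identify $(\CC_K)^l[\equiv n,\leqslant n]$ with $\PC^l[n]$, use Gordon's count of $\CM^\times$-fixed points to force all components to be non-empty (hence a bijection), and invoke Lemma~\ref{lem:param-compatible} both for independence of the auxiliary $K$ and for the $k$-core containment criterion. Your write-up is somewhat more explicit than the paper's (the common-multiple argument for well-definedness and the choice of $K$ divisible by $k$ in the last step are only implicit there), but the ideas coincide.
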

\bigskip
\begin{proof}
Assume $k>n$. Then by Corollary \ref{coro:big-m-C*}, we have $\XC_\th(n\d_l)^{\mub_m}=\XC_\th(n\d_l)^{\CM^\times}$. 
Moreover, the set $\XC_\th(n\d_l)^{\CM^\times}$ is finite. We already know that $\XC_\th(n\d_l)^{\mub_m}$ is in bijection 
with a subset of $(\CC_{k})^l[\equiv n,\leqslant n]$. Moreover, $k>n$ also implies that we 
have $(\CC_{k})^l[\equiv n,\leqslant n]=\PC^l[n]$. This shows that $\XC_\th(n\d_l)^{\mub_m}=\XC_\th(n\d_l)^{\CM^\times}$ 
is in bijection with a subset of $\PC^l[n]$. Moreover, by Lemma \ref{lem:param-compatible}, this bijection is independent 
of the choice of $k$.  But it is well-known (see \cite{gordon}) that $\XC_\th(n\d_l)^{\mub_m}$ and $\PC^l[n]$ have the 
same cardinalities. This shows that the bijection above is a bijection between $\XC_\th(n\d_l)^{\mub_m}$ and $\PC^l[n]$ 
(not just a subset of $\PC^l[n]$).

The second statement follows from Lemma \ref{lem:param-compatible}.
\end{proof}

\bigskip
\begin{coro}
\label{coro:always-not-empty}
For each $\lambda\in (\CC_{k})^l[\equiv n,\leqslant n]$, we have $\XC_\lambda\ne \vide$. 
In particular, we have $\EC(k,l,n)=\EC(k,l,n)_{\ne\vide}$.
\end{coro}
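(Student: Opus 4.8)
The plan is to read this off directly from Proposition~\ref{prop:param-Cfixed}, which has just produced a parametrization $\XC_\th(n\d_l)^{\CM^\times}=\{p_\mu;~\mu\in\PC^l[n]\}$ of the torus-fixed points that is compatible with cores, in the sense that $p_\mu\in\XC_\lambda$ precisely when $\lambda$ is the $k$-core of $\mu$. First I would record the harmless translation between the two formulations of the corollary: since $\XC_\lambda=\XC_{\d^{-1}(\lambda)}$ and $\d$ is a bijection from $\EC(k,l,n)$ onto $(\CC_k)^l[\equiv n,\leqslant n]$, the assertion that $\XC_\lambda\ne\vide$ for every $\lambda$ is literally the equality $\EC(k,l,n)=\EC(k,l,n)_{\ne\vide}$. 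Hence it suffices to prove non-emptiness of each $\XC_\lambda$.

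Next I would fix an arbitrary $\lambda\in(\CC_k)^l[\equiv n,\leqslant n]$. Recall from the discussion preceding Lemma~\ref{lem:param-compatible} that this set is nothing but the image of $\PC^l[n]$ under $\Core_k$; equivalently, every element of $(\CC_k)^l[\equiv n,\leqslant n]$ is realized as the $k$-core of some $l$-partition of $n$. I would therefore choose $\mu\in\PC^l[n]$ with $\Core_k(\mu)=\lambda$.

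Then I would invoke the second assertion of Proposition~\ref{prop:param-Cfixed}: as $\lambda$ is exactly the $k$-core of $\mu$, the fixed point $p_\mu$ lies in $\XC_\lambda$. Since $p_\mu$ is a $\CM^\times$-fixed point it is in particular $\mub_m$-fixed, so $\XC_\lambda$ genuinely contains the point $p_\mu$ and is non-empty. Running this over all $\lambda$ gives $\XC_\lambda\ne\vide$ for each $\lambda\in(\CC_k)^l[\equiv n,\leqslant n]$, which by the translation of the first paragraph yields $\EC(k,l,n)=\EC(k,l,n)_{\ne\vide}$.

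There is essentially no obstacle to surmount here: all the substantive content has already been absorbed into Proposition~\ref{prop:param-Cfixed} and into the identification of $(\CC_k)^l[\equiv n,\leqslant n]$ with $\Core_k(\PC^l[n])$. The only point one must keep an eye on is the \emph{direction} of the core map—what is needed is its surjectivity onto $(\CC_k)^l[\equiv n,\leqslant n]$, so that every target core is witnessed by an honest $\CM^\times$-fixed point—and this surjectivity is immediate from the very definition of that set as the image of $\Core_k$.
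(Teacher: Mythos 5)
Your proof is correct and is essentially identical to the paper's: the paper also picks $\mu\in\PC^l[n]$ with $\Core_k(\mu)=\lambda$ (using that $(\CC_k)^l[\equiv n,\leqslant n]$ is the image of $\Core_k$ on $\PC^l[n]$) and then cites Proposition~\ref{prop:param-Cfixed} to get $p_\mu\in\XC_\lambda$, hence $\XC_\lambda\ne\vide$. Your extra remarks on the translation via the bijection $\d$ and on surjectivity of $\Core_k$ are just explicit spellings-out of what the paper leaves implicit.
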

\bigskip
\begin{proof}
Assume $\lambda\in (\CC_{k})^l[\equiv n,\leqslant n]$. Then there exists $\mu\in \PC^l[n]$ 
such that $\Core_k(\mu)=\lambda$. Then $\XC_\lambda$ is not empty because we have $p_\mu\in \XC_\lambda$ by Proposition \ref{prop:param-Cfixed}.
\end{proof}

\bigskip

\subsection{$\CM^\times$-fixed points}
In the previous section we have constructed a parametrization of the $\CM^\times$-fixed points 
in $\XC_\th(n\d_l)$ by $\PC^l[n]$. Another parametrization of the $\CM^\times$-fixed points in 
$\XC_\th(n\d_l)$ by the same set is done by Gordon \cite{gordon}. This parametrization is given 
in terms of baby Verma modules.

It is not obvious at all how to compare these two parametrizations. But this can be deduced 
from the main theorem in \cite{Przez}.

\bigskip
For $\lambda=(\lambda^0,\dots,\lambda^{l-1})\in\PC^l$ we set $\lambda^\flat=(\lambda^{l-1},\dots,\lambda ^0)$. 
\begin{prop}\label{prop:compare-Gordon}
The parametrization of the $\CM^\times$-fixed points in $\XC_\th(n\d_l)$ given in 
Proposition~\ref{prop:param-Cfixed} differs from Gordon's parametrization by the twist
$$
\PC^l[n]\to \PC^l[n],\quad \lambda\mapsto\lambda^\flat.
$$ 
\end{prop}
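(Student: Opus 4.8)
The plan is to treat both labellings as bijections $\PC^l[n] \longiso \XC_\th(n\d_l)^{\CM^\times}$ and to determine the permutation relating them, using as the one substantial input the main theorem of~\cite{Przez}. Write $\mu \mapsto p_\mu$ for the parametrisation of Proposition~\ref{prop:param-Cfixed} and $\mu \mapsto G_\mu$ for Gordon's parametrisation by baby Verma modules~\cite{gordon}. Since both are bijections onto the same finite set $\XC_\th(n\d_l)^{\CM^\times}$, there is a unique permutation $\s$ of $\PC^l[n]$ with $G_\mu = p_{\s(\mu)}$ for all $\mu$, and the claim is exactly that $\s$ is the reversal $\mu \mapsto \mu^\flat$.

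First I would record that the parametrisation $\mu \mapsto p_\mu$ is rigidly characterised by its behaviour with respect to the $\mub_m$-fixed loci. By Theorem~\ref{theo:fixed-quiver} and Lemma~\ref{lem:param-compatible}, for every $k \ge 1$ the irreducible components of $\XC_\th(n\d_l)^{\mub_m}$ are the subvarieties $\XC_\lambda$ with $\lambda \in (\CC_k)^l[\equiv n,\leqslant n]$, and Proposition~\ref{prop:param-Cfixed} asserts that $p_\mu \in \XC_\lambda$ if and only if $\lambda = \Core_k(\mu)$. Thus any labelling $\mu \mapsto P_\mu$ of $\XC_\th(n\d_l)^{\CM^\times}$ coincides with $\mu \mapsto p_\mu$ as soon as $P_\mu \in \XC_\lambda \Leftrightarrow \lambda = \Core_k(\mu)$ holds; indeed, taking $k>n$ — where $\XC_\th(n\d_l)^{\mub_m}=\XC_\th(n\d_l)^{\CM^\times}$ by Corollary~\ref{coro:big-m-C*} and $\Core_k(\mu)=\mu$, since each component of $\mu$ has at most $n<k$ boxes — shows that this compatibility already pins the labelling down uniquely. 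Consequently, using $\Core_k(\mu^\flat)=(\Core_k\mu)^\flat$, to prove $G_\mu = p_{\mu^\flat}$ it suffices to show that Gordon's points satisfy the reversed compatibility: $G_\mu \in \XC_\lambda$ if and only if $\lambda = (\Core_k\mu)^\flat$.

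The heart of the argument is then to compute, for each $k$, which component $\XC_\lambda$ contains the baby-Verma fixed point $G_\mu$; this is precisely the combinatorial information supplied by the main theorem of~\cite{Przez}, which translates the interaction of Gordon's $\CM^\times$-fixed points with the finite subgroups $\mub_m$ into an explicit statement about $k$-cores of $l$-multipartitions. Feeding this computation into the characterisation above identifies $\s$ with an automorphism of the indexing set that commutes with every $\Core_k$, and the remaining task is to pin this automorphism down exactly. Here one must carefully match conventions: the dictionary of Theorem~\ref{theo:quiver-cm}, in which Gordon's parameters are related to ours by $H_i = k_{-i}-k_{1-i}$, together with the chosen orientation of the cyclic quiver $\Qov_l$ and the normalisation of the residue map $\Res_l$ on $\ZM/l\ZM$. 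Tracking these through, the cumulative discrepancy is the reversal $i \mapsto -1-i$ of the vertex set $\ZM/l\ZM$, which on $l$-multipartitions is exactly the twist $\mu \mapsto \mu^\flat$. I expect the main obstacle to be precisely this bookkeeping: verifying that the composite of all the intermediate identifications is $\flat$ and not some other symmetry of $\ZM/l\ZM$ (a cyclic rotation, or the plain inversion $i \mapsto -i$), and transporting the normalisation of~\cite{Przez} correctly into the present conventions.
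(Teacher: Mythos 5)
Your reduction is sound as far as it goes, and it mirrors the paper's own logic: for $k>n$ one has $\XC_\th(n\d_l)^{\mub_m}=\XC_\th(n\d_l)^{\CM^\times}$ (Corollary \ref{coro:big-m-C*}) and $(\CC_k)^l[\equiv n,\le n]=\PC^l[n]$, so the compatibility $p_\mu\in\XC_\lambda\Leftrightarrow\lambda=\Core_k(\mu)$ pins the labelling $\mu\mapsto p_\mu$ down uniquely, and it would indeed suffice to show that Gordon's point $G_\mu$ lies in the component indexed by $(\Core_k\mu)^\flat$. The gap is in how you propose to establish that last statement. First, you attribute to the main theorem of \cite{Przez} something it does not contain: \cite[Theorem~1.1~a)]{Przez} constructs a bijection between $\XC_\th(n\d_l)^{\CM^\times}$ and $\PC[nl]\cap\PC_{l,\vide}$ (partitions of $nl$ with trivial $l$-core), and \cite[Theorem~1.2]{Przez} states that composing it with $\lambda\mapsto(\beta_{l,\vide}(\lambda))^\flat$ recovers Gordon's labelling; there is no statement there about $\mub_m$-fixed components or about $k$-cores of $l$-multipartitions --- that combinatorial layer is supplied by Proposition \ref{prop:param-Cfixed} and Lemma \ref{lem:param-compatible} of the present paper, not by \cite{Przez}. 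Second, and decisively, the identification of the twist as $\flat$ --- which is the entire content of the proposition --- is exactly what you leave undone: you assert that tracking the $H_i=k_{-i}-k_{1-i}$ dictionary, the quiver orientation and the residue normalisation yields the reversal $i\mapsto -1-i$ rather than a rotation or the inversion $i\mapsto -i$, and then concede that verifying this is the main obstacle. A proof cannot defer precisely the statement to be proved to unperformed bookkeeping.

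The missing idea, which is how the paper closes this without any convention-chasing, is an intermediate parametrization. Run the argument of Proposition \ref{prop:param-Cfixed} one level up, with the components of $\XC_\th(n\d_l)^{\mub_m}$ indexed by $\CC_m[\equiv nl]\cap\CC_m[\le nl]\cap(\CC_m)_{l,\vide}$ (Lemma \ref{lem:bij_eps'}) instead of $(\CC_k)^l[\equiv n,\le n]$; for $k>n$ this yields a labelling of $\XC_\th(n\d_l)^{\CM^\times}$ by $\PC[nl]\cap\PC_{l,\vide}$, independent of $k$, and this labelling coincides with Przezdziecki's because the two are given by the same construction (this is a comparison of constructions, not of normalisations). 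Your $p_\mu$ is that intermediate labelling composed with the $l$-quotient $\beta_{l,\vide}$ with no flat, while \cite[Theorem~1.2]{Przez} says Gordon's labelling is the same intermediate labelling composed with $\lambda\mapsto(\beta_{l,\vide}(\lambda))^\flat$. The twist $\flat$ is thus read off directly from the cited theorem; nothing about the orientation of $\Qov_l$ or the residue map ever needs to be tracked.
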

\bigskip
\begin{proof}
We have constructed in Lemma \ref{lem:bij_eps'} a parametrization of the irreducible components of 
$\XC_\th(n\d_l)^{\mu_m}$ by the set $\CC_m[\equiv nl]\cap \CC_m[\leqslant nl]\cap (\CC_m)_{l,\vide}$. 
Similarly to the proof of Proposition~\ref{prop:param-Cfixed}, this yields a bijection between  
$\XC_\th(n\d_l)^{\CM^\times}$ and $\PC[nl]\cap \PC_{l,\vide}$.

Indeed, if $k>n$, then we have $\XC_\th(n\d_l)^{\CM^\times}=\XC_\th(n\d_l)^{\mu_m}$ by Corollary~\ref{coro:big-m-C*} and 
$$
\CC_m[\equiv nl]\cap \CC_m[\leqslant nl]\cap (\CC_m)_{l,\vide}=\PC[nl]\cap \PC_{l,\vide}.
$$ 
Moreover, an argument similar to the proof of Proposition \ref{prop:param-Cfixed} shows that the obtained bijection 
$\XC_\th(n\d_l)^{\CM^\times}\simeq\PC[nl]\cap \PC_{l,\vide}$ is independent of $k$.

On the other hand, \cite[Theorem~1.1~a)]{Przez} also constructs a bijection between $\XC_\th(n\d_l)^{\CM^\times}$ and 
$\PC[nl]\cap \PC_{l,\vide}$. It is clear from the construction that this bijection coincides with ours.

Theorem \cite[Theorem~1.2]{Przez} claims that the bijection
$$
\PC[nl]\cap \PC_{l,\vide}\to \PC^l[n],\quad \lambda\mapsto(\beta_{l,\vide}(\lambda))^\flat
$$
identifies the parametrization of the $\CM^\times$-fixed points by the set $\PC[nl]\cap \PC_{l,\vide}$ with 
Gordon's parametrization of the $\CM^\times$-fixed points by the set $\PC^l[n]$.

On the other hand, the parametrization of the $\CM^\times$-fixed points by $\PC[nl]\cap \PC_{l,\vide}$ 
composed with the $l$-quotient bijection $\beta_{l,\vide}$ is nothing else but the parametrization of the $\CM^\times$-fixed 
points by $\PC^l[n]$ from Proposition \ref{prop:param-Cfixed}.
\end{proof}

\bigskip
\begin{rema}
\label{rem:od-CM}
The parametrization of the points of $\XC_\th(n\d_l)^{\CM^\times}$ given in Proposition \ref{prop:param-Cfixed} 
is constructed as a parametrization of the irreducible components of $\XC_\th(n\d_l)^{\mu_m}$ for a very big $m$. 
This parametrization can be seen in an "$m$-independent way" if we replace the quiver $\overline{Q}_m$ with a very 
big $m$ by an infinite quiver. This can be done in the following way.

 Let $\overline Q_\infty$ be the quiver defined in the same way as $\overline Q_l$ with respect to the vertex 
 set $\ZM$ instead of $\ZM/l\ZM$. The dimension vectors of the quiver $\overline Q_\infty$ are in $\ZM^\ZM$. 
Set
$$
\ZM^\ZM_{\rm fin}=\{\hd\in \ZM^\ZM;~\mbox{$\hd$ has finitely many non-zero components}\}.
$$ 
Consider the map
$$
p\colon \ZM^\ZM_{\rm fin}\to \ZM^{\ZM/l\ZM},\quad (p(\hd))_i=\sum_{j\equiv i\mod l}\hd_j.
$$

For each $\hd\in \ZM^\ZM_{\rm fin}$, we have a linear map 
$i^{(\hd)}_{\infty}\colon \Rep(\overline Q_\infty, \hd)\to \Rep(\overline Q_l,p(\hd))$, 
defined in the same way is in Section \ref{sub:fixed}. Now, for each $\theta\in \CM^{\ZM/l\ZM}$ 
we consider the element $\theta[\infty]\in \CM^\ZM$ given by $\theta[\infty]_i=\theta_{i\mod l}$. 
Then we obtain a $\CM^\times$-invariant morphism of algebraic varieties 
$i^{(\hd)}_\infty\colon \XC_{\theta[\infty]}(\hd)\to \XC_\theta(p(\hd))$. 
But the variety $\XC_{\theta[\infty]}(\hd)$ is obviously $\CM^\times$-stable because the $\CM^\times$ 
action is induced by elements of $\GL(\hd)$. Since $\XC_{\theta[\infty]}(\hd)$ is connected, 
it is a singleton (if it is not empty). Set 
$$
\EC(\infty, l,d)=\{\hd\in \ZM^\ZM_{\rm fin};~p(\hd)=d,\XC_{\theta[\infty]}(\hd)\ne \vide\}.
$$
If $\hd\in \EC(\infty, l,d)$, the image of $i^{(\hd)}_\infty$ is a $\CM^\times$-fixed point in 
$\XC_\theta(d)$. Let us call this point $p_{\hd}$.

Now, we assume $d=n\d_l$, $n\geqslant 0$ and that $\XC_\theta(d)$ is smooth. The construction of 
the parametrization of the $\CM^\times$-fixed points in $\XC_\th(n\d_l)$ given in 
Proposition~\ref{prop:param-Cfixed} implies that each $\CM^\times$-fixed point $p_\mu$, $\mu\in \PC^l[n]$, 
in $\XC_\th(n\d_l)$ is of the form $p_{\hd}$ for a unique $\hd\in \EC(\infty, l,n\d_l)$. Moreover, this $\hd$ 
is given by $\hd=\Res_\infty(\beta_{l,\vide}^{-1}(\mu))$.\finl
\end{rema}

\bigskip
\begin{lem}
Let $d\in \ZM^{\ZM/l\ZM}$ be such that 
the pair $(d,\theta)$ is smooth (see Definition \ref{def:smooth-pair}). 
Then by \eqref{eq:iso-lusztig}, the variety $\XC_\theta(d)$ is smooth and non-empty.
Let $\nu$ be the $l$-core such that we have $d=n\d_l+\Res_l(\nu)$ (see Proposition \ref{prop:core=orbit0}). 
Then we have the following. 

\begin{itemize}
\item[(a)] The  $\CM^\times$-fixed points in $\XC_{\theta}(d)$ are exactly $p_{\hd}$, $\hd\in \EC(\infty, l,d)$ (without repetition).

\item[(b)] We have $\EC(\infty, l,d)=\{\Res_\infty(\lambda);~\lambda\in \PC_{l,\nu}[|d|]\}$.
\end{itemize}
\end{lem}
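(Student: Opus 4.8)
The plan is to deduce both statements from the infinite-quiver parametrisation set up in Remark~\ref{rem:od-CM}, the case $\nu=\vide$ (i.e.\ $d=n\d_l$) being already contained there. I would begin with two bookkeeping observations. Writing $d=n\d_l+\Res_l(\nu)$ as in Proposition~\ref{prop:core=orbit0}, the element $\Res_l(\nu)$ lies in the $W_l^\aff$-orbit of $0$; since $\d_l$ is fixed by all of $W_l^\aff$ (Remark~\ref{rem:twisted_action}), $d$ and $n\d_l$ lie in the same orbit, so $n$ is the integer of Lemma~\ref{lem:orbites}, $n\ge 0$ and $|d|=nl+|\nu|$. Moreover I would check the combinatorial identity $\{\lambda\in\PC\mid\Res_l(\lambda)=d\}=\PC_{l,\nu}[|d|]$: such a $\lambda$ satisfies $|\lambda|=|d|$, and since removing an $l$-ribbon subtracts $\d_l$ from $\Res_l$ one has $\Res_l(\Core_l(\lambda))\equiv d\bmod\ZM\d_l$, forcing $\Core_l(\lambda)=\nu$ by Proposition~\ref{prop-bij_cores-roots}; conversely two elements of $\ZM^{\ZM/l\ZM}$ congruent modulo $\d_l$ with equal total sum coincide, so $\Res_l(\lambda)=d$. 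As $\Res_\infty$ is injective on $\PC$, the set $\{\Res_\infty(\lambda)\mid\lambda\in\PC_{l,\nu}[|d|]\}$ has cardinality $|\PC_{l,\nu}[|d|]|$, which equals $|\PC^l[n]|$ by the bijection $\beta_{l,\nu}$ of Lemmas~\ref{lem:l-quot} and~\ref{lem:propetries-lquot}(b).

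For (a) I would repeat the argument of the proof of Theorem~\ref{theo:fixed-quiver}, replacing $\mub_m$ by $\CM^\times$. Given a $\CM^\times$-fixed point with representative $(X,Y)\in\YC_\theta(d)$, freeness of the $\Pb\Gb\Lb(d)$-action (Theorem~\ref{theo:smooth}) yields a unique cocharacter $\rho\colon\CM^\times\to\Pb\Gb\Lb(d)$ with $\lexp{\rho(\xi)}{(X,Y)}=(\xi^{-1}X,\xi Y)$. Lifting $\rho$ to $\Gb\Lb(d)$ and decomposing $\bigoplus_i\CM^{d_i}$ into weight spaces $E_j$ ($j\in\ZM$) gives $\hd=(\dim E_j)_j\in\ZM^\ZM_{\mathrm{fin}}$, with $X,Y$ lowering and raising the grading by one; because $\mub_l$ acts trivially on $\XC_\theta(d)$, the space $E_j$ sits in the component $j\bmod l$, so $p(\hd)=d$. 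Normalising the lift so that the image of the rank-one moment-map correction lies in $E_0$ (possible and unique, as that image is a single $\rho$-eigenline in the $0$-component, exactly as in the proof of Theorem~\ref{theo:fixed-quiver}) exhibits $(X,Y)$ in the image of $i^{(\hd)}_\infty$, so $\hd\in\EC(\infty,l,d)$ and the point equals $p_{\hd}$. Every $p_{\hd}$ is fixed by construction, and uniqueness of $\rho$ and of the normalisation shows the $p_{\hd}$ are pairwise distinct; this proves (a) and identifies $\EC(\infty,l,d)$ with $\XC_\theta(d)^{\CM^\times}$.

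For (b), smoothness of $(d,\theta)$ together with~\eqref{eq:iso-lusztig} gives a $\CM^\times$-equivariant isomorphism $\XC_\theta(d)\simeq\XC_{w(\theta)}(n\d_l)$ for a suitable $w\in W_l^\aff$ (the reflection functors of~\cite{lusztig} commute with the scaling action), so by Proposition~\ref{prop:param-Cfixed} the locus $\XC_\theta(d)^{\CM^\times}$ has cardinality $|\PC^l[n]|$. By (a) and the count above, $\EC(\infty,l,d)$ and $\{\Res_\infty(\lambda)\mid\lambda\in\PC_{l,\nu}[|d|]\}$ are finite of the same size, so it suffices to prove the inclusion $\{\Res_\infty(\lambda)\mid\lambda\in\PC_{l,\nu}[|d|]\}\subseteq\EC(\infty,l,d)$, i.e.\ that $\XC_{\theta[\infty]}(\Res_\infty(\lambda))\ne\vide$ whenever $\Res_l(\lambda)=d$. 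For this I would write the explicit monomial representation of $\Qov_\infty$ carried by the boxes of $\lambda$: the space at vertex $j$ is spanned by the boxes of content $j$, and $X$, $Y$ are the box-removing and box-adding maps, with scalars chosen to match the diagonal part $\theta[\infty]$ and the rank-one correction at vertex $0$; smoothness of the parameters makes this representation stable, so it defines a point of $\XC_{\theta[\infty]}(\Res_\infty(\lambda))$. This gives the desired equality.

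I expect the genuine obstacle to be exactly this last step: checking that the monomial representation attached to each $\lambda$ with $\Res_l(\lambda)=d$ really lands in $\YC_{\theta[\infty]}(\Res_\infty(\lambda))$ (that is, that its commutator is diagonal with the single rank-one defect at vertex $0$ and the prescribed entries $\theta[\infty]$) and that it is stable. An alternative avoiding the explicit construction would be to transport the $\nu=\vide$ description of Remark~\ref{rem:od-CM} across the equivariant isomorphism above and to compute the effect of $w$ on $\infty$-residues through the classical core-changing, $l$-quotient-preserving action of the affine symmetric group on partitions; the difficulty then becomes identifying that action explicitly, which is the crux either way.
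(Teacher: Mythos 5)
Your part (a) is essentially correct, and it is in substance the paper's own mechanism: you re-run the proof of Theorem~\ref{theo:fixed-quiver} with a cocharacter of $\Pb\Gb\Lb(d)$ playing the role of the single element $g$, which is what Remark~\ref{rem:od-CM} packages via $\mub_m$-fixed points for large $m$ (Corollary~\ref{coro:big-m-C*}). One caveat: you justify freeness of the $\Pb\Gb\Lb(d)$-action on $\YC_\th(d)$ by citing Theorem~\ref{theo:smooth}, but that theorem is stated only for the dimension vector $n\d_l$. For general $d$ the required input is that \emph{every} point of $\YC_\th(d)$ is a simple representation when $(d,\th)$ is a smooth pair, which must be transported through the reflection isomorphisms~\eqref{eq:iso-lusztig}; this is precisely the one ingredient the paper's proof singles out (``it is just important to know that \dots each representation in $\XC_\theta(d)$ is simple''), so it should be stated and argued, not absorbed into a citation that does not literally cover it.

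The genuine gap is in (b), and you name it yourself: the inclusion $\{\Res_\infty(\lambda)~|~\lambda\in\PC_{l,\nu}[|d|]\}\subseteq \EC(\infty,l,d)$, i.e.\ the non-emptiness of $\XC_{\th[\infty]}(\Res_\infty(\lambda))$ for each such $\lambda$, rests on an unverified ``monomial representation'' construction, so the proposal is not a complete proof. Moreover, within your own (correct) cardinality framework you chose the hard direction: since both sets are finite of cardinality $|\PC^l[n]|$, the reverse inclusion $\EC(\infty,l,d)\subseteq\{\Res_\infty(\lambda)~|~\lambda\in\PC_{l,\nu}[|d|]\}$ would do equally well, and that one \emph{is} reachable with the paper's toolkit: given $\hd$ with $\XC_{\th[\infty]}(\hd)\ne\vide$, fold $\ZM\to\ZM/kl\ZM$ for $k$ large compared with the support of $\hd$ (the computation of Lemma~\ref{lem:closed-immersion}); the folded representation is simple because its further image in $\YC_\th(d)$ is simple (the smooth-pair hypothesis again), so Lemma~\ref{lem:descrX-if-simple}(b) puts the folded vector in $(\ZM^{\ZM/kl\ZM})_+$, and the core/residue combinatorics of Section~4 (Proposition~\ref{prop:core=orbit0}, Lemma~\ref{lem:+_equiv_<=n}, Lemma~\ref{lem:bij_eps'}, adapted to the core $\nu$) exhibit it as $\Res_{kl}(\lambda)$ for a partition $\lambda$; injectivity of folding on the relevant supports then gives $\hd=\Res_\infty(\lambda)$, and $\lambda\in\PC_{l,\nu}[|d|]$ by your own first-paragraph identity. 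This is what the paper's terse proof means by reducing to Remark~\ref{rem:od-CM}: non-emptiness is never established by an explicit construction there either; it falls out of Gordon's count (Proposition~\ref{prop:param-Cfixed}, Corollary~\ref{coro:always-not-empty}) exactly as in your counting argument, once the \emph{easy} inclusion is in place. So the strategy is salvageable, but as written the crucial step is missing, and the construction you propose to fill it (solving the hook-type recursions for the box-adding/removing scalars and proving stability) is real work that the paper deliberately avoids.
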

\bigskip
\begin{proof}
The case $d=n\d_l$ follows from Remark \ref{rem:od-CM}. But the proof in the general case is completely the same. 
It is just important to know that under the given assumption on $(d,\theta)$, each representation in $\XC_\theta(d)$ is simple.
\end{proof}

\bigskip

The lemma above yields parametrizations of $\XC_\theta(d)^{\CM^\times}$ by $\EC(\infty, l,d)$ and by $\PC_{l,\nu}[|d|]$. 
Then the $l$-quotient bijection $\beta_{l,\nu}\colon \PC_{l,\nu}[|d|]\to \PC^l[n]$ 
(see Lemma~\ref{lem:propetries-lquot}(b)) yields also a parametrization of $\XC_\theta(d)^{\CM^\times}$ by $\PC^l[n]$. 
For $\mu\in \PC^l[n]$ we denote by $p_\mu$ the corresponding  $\CM^\times$-fixed point of $\XC_\theta(d)$ (i.e., we 
have $p_\mu=p_{\hd}$ if $\hd=\Res_\infty(\beta_{l,\nu}^{-1}(\mu))$. We can write $p_\mu^d$ instead of $p_\mu$ to stress 
that it is a point in $\XC_\theta(d)^{\CM^\times}$.

The group $W^\aff_l$ acts on $\PC^l[n]$ by permutation of components. More precisely, for each $i\in \ZM/l\ZM$, the element $s_i$ exchanges the components $\mu^i$ and $\mu^{i+1}$ of $\mu=(\mu^0,\dots,\mu^{l-1})$. It is clear that the kernel of the canonical surjection $W^\aff_l\to W_l$ acts trivially. 

\bigskip
\begin{lem}
\label{lem:fixes-same-lpart}
Let $(d,\theta)$ be a smooth pair (see Definition \ref{def:smooth-pair}) 
in $\ZM^{\ZM/l\ZM}\times \CM^{\ZM/l\ZM}$. Assume that $\theta_i\ne 0$. Then for every $\mu\in \PC^l[n]$, the bijection 
$\XC_{\theta}(d)\simeq \XC_{s_i(\theta)}(s_i(d))$ given by \eqref{eq:iso-lusztig} sends the $\CM^\times$-point 
$p_\mu^{d}\in\XC_{\theta}(d) $ to $p_{s_i(\mu)}^{s_i(d)}\in \XC_{s_i(\theta)}(s_i(d))$. 
\end{lem}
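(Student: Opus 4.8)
The plan is to compute both fixed points through the infinite quiver $\Qov_\infty$ introduced in Remark~\ref{rem:od-CM}, and to realise the Lusztig isomorphism~\eqref{eq:iso-lusztig} as the effect of reflecting $\Qov_\infty$ simultaneously at all the vertices lying over $i$. Recall from the preceding Lemma that, if $\nu$ denotes the $l$-core with $d=n\d_l+\Res_l(\nu)$ and $\l=\beta_{l,\nu}^{-1}(\mu)$, then $p_\mu^d=p_{\hd}$ with $\hd=\Res_\infty(\l)$, the point $p_{\hd}$ being the image of the singleton $\XC_{\theta[\infty]}(\hd)$ under the folding morphism $i^{(\hd)}_\infty\colon \XC_{\theta[\infty]}(\hd)\to \XC_\theta(d)$. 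Likewise $p_{s_i(\mu)}^{s_i(d)}=p_{\hd'}$ for the vector $\hd'=\Res_\infty(\beta_{l,s_i(\nu)}^{-1}(s_i(\mu)))$ over $s_i(d)$, whose $l$-core is $s_i(\nu)$ by the equivariance of $\Res_l$ recorded in the proof of Proposition~\ref{prop:core=orbit0}. Since both varieties are smooth the isomorphism~\eqref{eq:iso-lusztig} is a $\CM^\times$-equivariant bijection of isolated fixed points, so everything reduces to two points: identifying how~\eqref{eq:iso-lusztig} interacts with the foldings $i^{(\hd)}_\infty$, and then computing $\hd'$ combinatorially.

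First I would introduce, for each vertex $j\in\ZM$ of $\Qov_\infty$, the type-$A_\infty$ analogue $\hat s_j$ of the reflection isomorphism of~\cite{lusztig}, the framing being placed at the vertex $0\in\ZM$ (which maps to $0\in\ZM/l\ZM$). For a fixed finite-support dimension vector, the reflections $\hat s_j$ with $j\equiv i \bmod l$ act at pairwise non-adjacent vertices (as $l\ge 2$), hence commute, and only finitely many act nontrivially; write $\sigma_i=\prod_{j\equiv i \bmod l}\hat s_j$ for their product. A direct check on the twisted reflection formulas of~\S\ref{sub:action-weyl} shows that $\sigma_i$ covers $s_i$: summing components over each residue class modulo $l$ intertwines $\sigma_i$ with the action of $s_i$ on $(d,\th)$, the single framing twist at the vertex $0\in\ZM$ accounting for the affine term $\d_{i0}\th_0$. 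The key structural point to establish is that reflecting $\Qov_\infty$ and then folding agrees with folding and then applying~\eqref{eq:iso-lusztig}; that is, the square relating $i^{(\hd)}_\infty$, $i^{(\sigma_i\hd)}_\infty$, the infinite reflection $\sigma_i$, and~\eqref{eq:iso-lusztig} commutes. This should follow because the reflection functor is local at each vertex while the folding morphism is the direct sum over the fibres of $p$, so reflecting all vertices over $i$ is intertwined with reflecting the single vertex $i$ downstairs. Granting this, \eqref{eq:iso-lusztig} sends $p_{\hd}=p_\mu^d$ to $p_{\sigma_i(\hd)}$, and it only remains to show $\sigma_i(\hd)=\hd'$.

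For the combinatorial identity $\sigma_i(\Res_\infty(\l))=\Res_\infty(\beta_{l,s_i(\nu)}^{-1}(s_i(\mu)))$ I would pass to the Maya-diagram (charged abacus) encoding of partitions, normalised so that the weight of $\l$ in the fermionic Fock space is exactly the twisted vector $\hd=\Res_\infty(\l)$ of Remark~\ref{rem:twisted_action}. In this model each simple reflection $\hat s_j$ acts by transposing the beads in positions $j$ and $j+1$, which induces precisely the Weyl reflection on the weight; and the $l$-quotient $\beta_{l,\nu}$ is read off by splitting the Maya diagram into its $l$ subsequences indexed by the residue of the position modulo $l$, the $i$-th subsequence giving $\mu^i$ and the relative charges recording $\nu$. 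Transposing positions $j\leftrightarrow j+1$ for every $j\equiv i \bmod l$ therefore interchanges the residue-$i$ and residue-$(i+1)$ subsequences wholesale; this swaps the components $\mu^i$ and $\mu^{i+1}$ of the $l$-quotient and replaces the $l$-core $\nu$ by $s_i(\nu)$. Hence $\sigma_i(\hd)$ is the $\infty$-residue of the partition with $l$-core $s_i(\nu)$ and $l$-quotient $s_i(\mu)$, which is exactly $\hd'$. The hypothesis $\th_i\neq 0$ is precisely what makes~\eqref{eq:iso-lusztig}, and each infinite reflection in $\sigma_i$, applicable.

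I expect the genuine obstacle to be the naturality statement of the second paragraph --- that the finite reflection isomorphism is induced by the simultaneous infinite reflections through the foldings $i^{(\hd)}_\infty$ --- since it requires unwinding Lusztig's construction of~\eqref{eq:iso-lusztig} and checking that it respects the periodicity defining the folding. By contrast, the bead-transposition computation is routine once the abacus model is fixed. An alternative that bypasses the infinite reflection functor would be to prove the compatibility at finite level $m=kl$ first, using the maps $i_k^{(d)}$ of~\S\ref{sub:fixed} and Theorem~\ref{theo:fixed-quiver} for all large $k$, and then pass to the limit via Remark~\ref{rem:od-CM}; this trades the $A_\infty$ reflection functor for the more concrete finite ones at the cost of an extra limiting argument.
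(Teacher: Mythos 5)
Your argument is correct and is essentially the paper's own proof: the paper likewise lifts $s_i$ to the simultaneous reflection at all vertices of $\Qov_\infty$ lying over $i$ (defining exactly your $\sigma_i$-action on $\ZM^\ZM_{\rm fin}$), states the commutativity of the square relating this action to the foldings $i^{(\hd)}_\infty$ and to the isomorphism \eqref{eq:iso-lusztig} (the naturality point you flag as the main obstacle is simply asserted there), and then identifies the induced map on residue vectors with the swap of $l$-quotient components together with the change of $l$-core. The only divergence is that for this last combinatorial step the paper cites \cite[Proposition~4.1.3]{Leeuwen} instead of redoing the Maya-diagram/abacus computation, which is in any case the substance of van Leeuwen's proof via edge sequences.
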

\bigskip
\begin{proof}
There is an action of $W^\aff_l$ on $\ZM^\ZM_{\rm fin}$ given by
$$
(s_i(\hd))_j=
\left\{\begin{array}{lll}
\d_{j0}+\hd_{j+1}+\hd_{j-1}-\hd_j &\mbox{ if }i\equiv j \mod l,\\
\hd_j &\mbox{ if }i\not\equiv j \mod l.
\end{array}\right.
$$
It is clear that this action lifts the $W^\aff_l$-actions on $\ZM^{\ZM/l\ZM}$ with respect to the map 
$p\colon \ZM^\ZM_{\rm fin}\to \ZM^{\ZM/l\ZM}$.

Let $\nu$ and $\nu'$ be the $l$-cores such that we have $d=\Res_l(\nu)+n\d_l$ and 
$s_i(d)=\Res_l(\nu')+n\d_l$.  For each $\hd\in \EC(\infty, l,d)$, the following diagram is commutative
$$
\diagram
\XC_{\theta[\infty]}(\hd) \rrto \ddto_{\DS{i^{(\hd)}_\infty}} && \XC_{s_i(\theta)[\infty]}(s_i(\hd))\ddto^{\DS{{i^{(s_i(\hd))}_\infty}}}\\
&&\\
\XC_\theta(d)                             \rrto&& \XC_{s_i(\theta)}(s_i(d)). \\
\enddiagram
$$
where the horizontal maps are the isomorphisms given by \eqref{eq:iso-lusztig}. This implies that the 
isomorphism~\eqref{eq:iso-lusztig} identifies the $\CM^\times$-fixed point $p_{\hd}$ with the $\CM^\times$-fixed 
point $p_{s_i(\hd)}$. Moreover, it follows from \cite[Proposition~4.1.3]{Leeuwen} 
that the following diagram is commutative
$$
\diagram
\PC^l \rrto^{\DS{\beta_{l,\nu}^{-1}}} \ddto_{\DS{s_i}} && \PC_{l,\nu} \rrto^{\DS{\Res_\infty}} && \ZM^\ZM_{\rm fin}\ddto^{\DS{s_i}} \\
&&&&\\
\PC^l \rrto^{\DS{\beta_{l,\nu'}^{-1}}} && \PC_{l,\nu'} \rrto^{\DS{\Res_\infty}} && \ZM^\ZM_{\rm fin}\\
\enddiagram
$$
This implies that \eqref{eq:iso-lusztig} identifies the $\CM^\times$-fixed point $p^d_\mu$ with the $\CM^\times$-fixes point $p_{s_i(\mu)}^{s_i(d)}$. 
\end{proof}
\bigskip

\begin{rema}
\label{rem:W'-W}
Fix $d\in \EC(k,l,n)$. Let $r$ be such that $d$ is in the $W^\aff_m$-orbit of $r\d_m$. Let $\nu$ be the $m$-core such that 
$d=r\d_m+\Res_m(\nu)$. Consider the irreducible component $\XC_d$ of $\XC(n\d_l)^{\mu_m}$. By Theorem \ref{theo:fixed-quiver} 
and the proof of Theorem \ref{theo:main}, we have an isomorphism
$$
\XC_d\simeq \XC_{\theta'}(r\d_m)
$$
for some $\theta'$.
This isomorphism depends on a choice of an element $w\in W^\aff_m$ such that $w(d)=r\d_m$ (see the proof of Theorem \ref{theo:main}) and of a choice of a reduced expression of $w$ that allows to apply a sequence of isomorphisms \eqref{eq:iso-lusztig}. We will assume that the element $w$ is chosen as in the proof of Lemma \ref{lem:orbites}. In particuler $w$ is in the kernel of the canonical surjection $W^\aff_m\to W_m$. This choice implies that the bijection $\XC_{\theta[k]}(d)\simeq \XC_{w(\theta[k])}(r\d_m)$ yields the following bijection on the $\CM^\times$-fixed points (see Lemma \ref{lem:fixes-same-lpart}): 

$$
\XC^{\CM^\times}_{\theta[k]}(d)\to \XC^{\CM^\times}_{w(\theta[k])}(r\d_m),\qquad p^{d}_\mu \mapsto p^{r\d_m}_\mu.
$$

The inclusion $\XC_{\theta'}(r\d_m)^{\CM^\times}\subset \XC_{\theta}(n\d_l)^{\CM^\times}$ yields a map
\begin{equation}
\label{eq:W'-W}
\PC^m[r]\to\PC^l[n].
\end{equation}
Let us describe this map.

We have the following bijections obtained from the $l$-quotient (or $m$-quotient) map (see also 
Lemma~\ref{lem:propetries-lquot}(b))
$$
\PC^m[r]\simeq \PC_{m,\nu}[mr+|\nu|],\qquad \PC^l[n]\simeq \PC_{l,\vide}[nl].
$$ 
Note that we have $mr+|\nu|=|d|=nl$.  Moreover, we have $\PC_{m,\nu}[mr+|\nu|]\subset \PC_{l,\vide}[nl]$ because 
every partition with m-core $\nu$ has a trivial $l$-core (since the $l$-core of $\nu$ is trivial). Then under 
the isomorphisms above, the inclusion $\PC_{m,\nu}[mr+|\nu|]\subset \PC_{l,\vide}[nl]$ is exactly the map \eqref{eq:W'-W}. 
Indeed, this follows from Lemma \ref{lem:fixes-same-lpart} (and the choice of $w$ above) and from the commutativity of the following diagram 
$$
\diagram
& \XC_{\theta[\infty]}(\hd)  \dlto_{\DS{i^{(\hd)}_\infty}} \drto^{\DS{i^{(\hd)}_\infty}} &\\
\XC_{\theta[k]}(d) \rrto^{\DS{i^{(d)}_k}} & & \XC_{\theta}(n\d_l) \\
\enddiagram
$$
for 
each $\hd\in \EC(\infty,m,d)$.

Now, let us describe the inclusion map $\PC_{m,\nu}[mr+|\nu|]\to \PC_{l,\vide}[nl]$ as a map $\PC^m[r]\to \PC^l[n]$. Set $\gamma=\beta_{l,\vide}(\nu)$. Then, by Lemma \ref{lem:propetries-lquot}, we have $\gamma=(\gamma^0,\dots,\gamma^{l-1})\in(\CC_k)^l$. For each $\lambda\in\PC^l[n]$ such that $\Core_k(\lambda)=\gamma$, we define $\beta_{k,\gamma}(\l)$ to be the $m$-partition 
$\mu=(\mu^0,\mu^1,\dots,\mu^{m-1})$ defined by 
$$\beta_{k,\gamma^i}(\l^i)=(\mu^i,\mu^{i+l},\dots,\mu^{i+(k-1)l})$$
for all $i \in \{0,1,\dots,l-1\}$. The map $\beta_{k,\gamma}$ is an isomorphism between $\PC^l_{k,\gamma}[n]$ and $\PC^m[r]$, where $\PC^l_{k,\gamma}[n]= \{\l\in\PC^l[n];~\Core_k(\lambda)=\gamma\}$. The following diagram is commutative
$$
\begin{CD}
\PC_{m,\nu}[mr+|\nu|] @>>> \PC_{l,\vide}[nl]\\
@V{\beta_{m,\nu}}VV       @VV\beta_{l,\vide}V\\                    
\PC^m[r]               @>{\beta_{k,\gamma}^{-1}}>> \PC^l[n]\\   
\end{CD}
$$
This implies that the map $\PC^m[r]\to \PC^l[n]$ obtained from the inclusion $\XC_{\theta'}(r\d_m)^{\CM^\times}\subset \XC_{\theta}(n\d_l)^{\CM^\times}$ is given by $\beta^{-1}_{k,\gamma}$.
\finl 
\end{rema}

\bigskip

\subsection{Explicit construction of $c'$}

We know from Theorem \ref{theo:fixed-quiver} that each irreducible component $\XCB$ of $\XC_\th(n\d_l)^{\mub_m}$ 
is of the form $\XCB=\XC_d$. Moreover, by Theorem \ref{theo:main} there exists 
a reflection subquotient $(V',W')$ of $(V,W)$ and a complex-valued map $c'\colon W'\to \CM$ such that 
$\XCB \simeq \ZCB_{\! c'}(V',W')$. The reflection subquotient was constructed in the proof of Theorem~\ref{theo:main}. 
Now we give an explicit construction of $c'$.

Recall from~\S\ref{sub:Cher} that the parameter $c$ is determined by the sequence of parameters $(a,k_0,k_1,\ldots, k_{l-1})$. 
Similarly, the parameter $c'$ is determined by $(a',k'_0,k'_1,\ldots, k'_{m-1})$. We are going to give explicit expressions 
for $(a',k'_0,k'_1,\ldots, k'_{m-1})$ in terms of $d \in \EC(k,l,m)$ and $(a,k_0,k_1,\ldots, k_{l-1})$.

First, we recall the construction of $c'$ in the proof of Theorem \ref{theo:main}. There is an element $w\in W_m^\aff$ 
such that $w(d)$ is a multiple of $\delta_m$. Then we put $\theta'=w(\theta[k])$.  Finally, we can obtain 
$(a',k'_0,\ldots,k'_{m-1})$ from $\theta'$ using \eqref{eq:k} and \eqref{eq:theta-k}.

This construction of $c'$ is not canonical because there is no preferable choice of an element $w$. Note that $\delta_m$ 
is stable by the subgroup $W_m$ of $W^{\aff}_m$.  So the element $w$ can be replaced by any elements of $W_mw$. 
(By Remark \ref{rem:perm-k}, a different choice of $w$ yields just a permutation of the parameters $k'_0,k'_1,\ldots,k'_{m-1}$.)  
The affine Weyl group $W^\aff_m$ is of the form $W_m\ltimes R_m$, where $R_m$ is the root lattice for $\mathfrak{sl}_m$.  
In particular, there is an element of $w'\in W_mw$ that is in $R_m$. More precisely, we have $w'=t_\alpha$, where 
$\alpha=d-d_0\delta_m$ (see the proof of Lemma \ref{lem:orbites}).

Then, by Lemma \ref{lem:affWeyl_translation-theta}, we have 
$w'(\theta[k])=t_\alpha(\theta[k])=\theta[k]+\Sigma(\th[k])\cdot {\overline\alpha}=\theta[k]+k\Sigma(\th)\cdot {\overline d}$. (Where the notation $\overline\alpha$ and $\overline d$ is as in Section \ref{sub:action-weyl}.)
This means that we have
$$
\theta'_i=(\theta[k])_{i}+k\Sigma(\th)(2d_i-d_{i-1}+d_{i+1})=\theta_{(i\mod l)}-ka(2d_i-d_{i-1}+d_{i+1})
$$

Passing back to the $k_i$-notations, we get $a'=ka$ and

$$
k'_i=k_{(i\mod l)}+a\left(\left\lfloor \frac{i-1}{l} \right\rfloor-\frac{k-1}{2}+k(d_{1-i}-d_{-i})\right)
$$
for $i= 1,\ldots,m-1, m$. (Here $k'_0=k'_m.)$

\bigskip
\begin{exemple}\label{ex:parametre}
Let us make the above formula explicit in the case $l=1$. In this case we have $m=k$.
Then we get $\theta=\theta_0=-a$ and $\theta[k]=(-a,-a,\ldots -a)$ ($k$ times).
This gives 
$$
\theta'_i=-a-ak(2d_i-d_{i-1}-d_{i+1}),
$$
for $i\in \ZM/k\ZM$.

$$
k'_i=a\left(i-\frac{k+1}{2}+k(d_{1-i}-d_{-i})\right)
$$
for $i=1,\ldots, k-1, k$. (Here $k_0=k_k.)$\finl
\end{exemple}

\bigskip

\begin{exemple}
Assume $l=2$, $k=2$ and $d=(0,0)$.
In this case $c$ is determined by $(a,k_0,k_1)$ such that $k_0+k_1=0$. Put $k_1=b$ (and then $k_0=-b$). 
Let us make the formulas for $k'_i$ explicit in this case. We have
$$
\left\{\begin{array}{rcl}
k'_0&=&-b+a/2,\\
k'_1&=&b-a/2,\\
k'_2&=&-b-a/2,\\
k'_3&=&b+a/2.
\end{array}\right.
$$w

\end{exemple}

\bigskip

\subsection{Recollection}
We will rewrite here the statement of Theorem~\ref{theo:fixed-quiver} 
with the help of the combinatorics developed in this section. 
First, we will denote by $\ZCB_{\!c}(l,1,n)$ the Calogero-Moser space 
associated with the complex reflection group $G(l,1,n)$ and with the 
parameter $c$. Here, we label $\CM^\times$-fixed points using Gordon's convention, which differs from our usual convention by a twist (see Proposition \ref{prop:compare-Gordon}). Recall that $(\CC_k)^l[\equiv n,\le n]$ is the set 
of $l$-tuples $\g=(\g^0,\dots,\g^{l-1})$ of $k$-cores 
such that 
$$|\g| \le n \text{~and~} |\g| \equiv n \mod k.$$
If $\l=(\l^0,\dots,\l^{l-1}) \in \PC^l[n]$ and $\Core_k(\l)=\gamma=(\gamma^0,\dots,\gamma^{l-1})$, then we define $\beta^\flat_{k,\gamma}(\l)$ to be the $m$-partition 
$\mu=(\mu^0,\mu^1,\dots,\mu^{m-1})$ defined by 
$$\beta_{k,\gamma^i}(\l^i)=(\mu^{i+(k-1)l},\dots\mu^{i+l},\mu^i)$$
for all $i \in \{0,1,\dots,l-1\}$. (In other words, we have $\beta^\flat_{k,\gamma}(\lambda)=(\beta_{k,\gamma}(\lambda^\flat))^\flat$.) Also, if $\l \in \PC^l[n]$, we denote by 
$\chi_\l^{(l)}$ the associated irreducible character of $G(l,1,n)$ 
and by $z_\l^{(l)}$ the $\CM^\times$-fixed point in $\ZCB_{\!c}(l,1,n)$ 
associated with $\chi_\l^{(l)}$ by~\cite[Corollary~5.8]{gordon}.

\bigskip

\begin{theo}\label{theo:recapitulatif}
Assume that $\ZCB_{\! c}(l,1,n)$ is smooth and 
let $(a,k_0,k_1,\dots,k_{l-1})$ be the family of parameters associated with $c$ as in~\S\ref{sub:quiver-cm}. 
Then $\ZCB_{\! c}(l,1,n)^{\mub_{kl}}$ is smooth (so its irreducible components coincide 
with its connected components) and:
\begin{itemize}
\itemth{a} There is a bijection $\g \mapsto \ZCB(\g)$ between the set 
$(\CC_k)^l[\equiv n,\le n]$ and the set of irreducible components of 
$\ZCB_{\! c}(l,1,n)^{\mub_{kl}}$ such that $z_\l^{(l)} \in \ZCB(\g)$ 
if and only if $\Core_k(\l)=\g$ (for any $\l \in \PC^l[n]$). 

\itemth{b} Let $\g \in (\CC_k)^l[\equiv n,\le n]$ and let $r=(n-|\g|)/k$. Then there is an isomorphism 
of varieties 
$$i_\g : \ZCB_{\! c'}(lk,1,r) \longiso \ZCB(\g),$$
where $c' : \Ref(G(lk,1,r)) \to \CM$ is the parameter associated with the family 
$(a',k_0',k_1',\dots,k_{kl-1}')$ such that
$$
\begin{cases}
a'=ka,\\
k_j' = k_{(j \mod l)} + a \left(\left\lfloor 
\DS{\frac{j-1}{l}} \right\rfloor - \DS{\frac{k-1}{2}} + k(d_{1-j}-d_{-j})\right),  
\qquad 1 \le j \le m \mbox{ and }k'_0=k'_m,
\end{cases}
$$
which satisfies $z_{(\b^\flat_{k,\g})^{-1}(\mu)}^{(l)}=i_\g\bigl(z_{\mu}^{(kl)}\bigr)$ 
for all $\mu \in \PC^{kl}[r]$. Here, $d=(d_i)_{i \in \ZM/kl\ZM}$ is defined by
$$d=\res_m(\nu)+r\d_m,$$
where $\nu =(\b_{l,\vide})^{-1}(\g)$ is an $m$-core.
\end{itemize}
\end{theo}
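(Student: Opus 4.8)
The plan is to read off every assertion from the results already established on quiver varieties, transporting them through the $\CM^\times$-equivariant isomorphism $\ZCB_{\! c}(l,1,n) \longiso \XC_\th(n\d_l)$ of Theorem~\ref{theo:quiver-cm} and translating the output with the combinatorial dictionary of the present section. Since that isomorphism is $\CM^\times$-equivariant, it identifies $\ZCB_{\! c}(l,1,n)^{\mub_{kl}}$ with $\XC_\th(n\d_l)^{\mub_{kl}}$, whose smoothness is Theorem~\ref{theo:fixed-quiver}(a); as any smooth variety has its irreducible components equal to its connected components, this already yields the parenthetical claim. By Theorem~\ref{theo:fixed-quiver}(b) together with Corollary~\ref{coro:always-not-empty}, the irreducible components are exactly the $\XC_d$, $d \in \EC(k,l,n)$, and the bijection $\d\colon \EC(k,l,n) \longbij (\CC_k)^l[\equiv n,\le n]$ reindexes them; this provides the bijection $\g \mapsto \ZCB(\g)$ of~(a). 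The membership criterion is then Proposition~\ref{prop:param-Cfixed}, which computes $p_\mu \in \XC_{\d^{-1}(\g)}$ in terms of $\Core_k(\mu)$, combined with Proposition~\ref{prop:compare-Gordon} to pass from the quiver labels $p_\mu$ to Gordon's labels $z_\l^{(l)}$; here one uses that $\Core_k$ is computed component by component, hence commutes with the twist $\mu \mapsto \mu^\flat$, so that the flat twist can be absorbed once and for all into the definition of $\ZCB(\g)$.

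For~(b), I would realize $i_\g$ as the composite
$$\ZCB_{\! c'}(lk,1,r) \stackrel{\sim}{\longrightarrow} \XC_{\th'}(r\d_m) \stackrel{\sim}{\longrightarrow} \XC_{\th[k]}(d) \stackrel{i_k^{(d)}}{\longrightarrow} \XC_d,$$
where $r=(n-|\g|)/k$, $\nu=(\beta_{l,\vide})^{-1}(\g)$ is the associated $m$-core, $d=\Res_m(\nu)+r\d_m$ is the element of $\EC(k,l,n)$ attached to $\g$ by $\d$, and $\th'=w(\th[k])$ for a chosen $w \in W_m^\aff$ with $w(d)=r\d_m$. The first arrow is Theorem~\ref{theo:quiver-cm} applied to $G(m,1,r)$, the second is the chain of Lusztig isomorphisms~\eqref{eq:iso-lusztig} (legitimate by Lemma~\ref{lem:descrX-if-simple} and its proof, which supplies a reduced word along which no forbidden $\th_i=0$ occurs), and the last is Theorem~\ref{theo:fixed-quiver}(c); the target $\XC_d$ is the component $\ZCB(\g)$ attached to $\g$ in~(a).

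To extract the displayed parameters, I would take the specific element $w=t_\alpha$ with $\alpha=d-d_0\d_m$ used in the proof of Lemma~\ref{lem:orbites}. Then Lemma~\ref{lem:affWeyl_translation-theta} gives $\th'=\th[k]+k\Sigma(\th)\,\overline{d}$, and feeding this through the substitutions~\eqref{eq:k} and~\eqref{eq:theta-k} (carried out already in the subsection on the explicit construction of $c'$) produces $a'=ka$ together with the stated formula for $k_j'$; the linearity noted in Theorem~\ref{theo:main} guarantees that this $c'$ is genuinely the parameter of $\ZCB_{\! c'}(lk,1,r)$.

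The remaining, and genuinely delicate, point is the fixed-point compatibility $z_{(\b^\flat_{k,\g})^{-1}(\mu)}^{(l)}=i_\g\bigl(z_\mu^{(kl)}\bigr)$. I would derive it from Remark~\ref{rem:W'-W}, which already identifies the map $\PC^m[r]\to\PC^l[n]$ induced on $\CM^\times$-fixed points by the inclusion $\ZCB(\g)\injto\ZCB_{\! c}(l,1,n)$ with the twisted $l$-quotient map, using Lemma~\ref{lem:fixes-same-lpart} to follow each fixed point $p_\mu$ across every application of~\eqref{eq:iso-lusztig}. The main obstacle of the whole proof is precisely the consistent bookkeeping of the three labellings in play — Gordon's $z$, the quiver $p$, and the $\beta$-maps — through this chain: one must verify that the flat twist introduced at the Gordon comparison (Proposition~\ref{prop:compare-Gordon}) and the reversal built into the $l$-quotient combine into exactly the single twist recorded by $\b^\flat_{k,\g}$, with no residual discrepancy, so that~(a) and~(b) hold simultaneously in Gordon's convention.
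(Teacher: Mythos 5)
Your proposal is correct and follows essentially the same route as the paper: the theorem is stated there as a ``recollection'' whose implicit proof is exactly the assembly you carry out --- Theorem~\ref{theo:quiver-cm} to pass to $\XC_\th(n\d_l)$, Theorem~\ref{theo:fixed-quiver} with Corollary~\ref{coro:always-not-empty} and Proposition~\ref{prop:param-Cfixed} for the components and membership criterion, Proposition~\ref{prop:compare-Gordon} for the flat twist, and the explicit $w=t_\alpha$ computation together with Remark~\ref{rem:W'-W} and Lemma~\ref{lem:fixes-same-lpart} for the parameters $(a',k'_j)$ and the fixed-point compatibility. Your observation that $\Core_k$ commutes with $\l\mapsto\l^\flat$, so the Gordon twist is absorbed into the bijection $\g\mapsto\ZCB(\g)$ and reappears only in $\b^\flat_{k,\g}$, is precisely the bookkeeping the paper's conventions are designed to encode.
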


\bigskip

\begin{coro}\label{coro:amusant}
Let $\g \in (\CC_k)^l[\equiv n,\le n]$ and let $r=(n-|\g|)/k$. 
$$i_\g^* : \Zrm(\CM G(l,1,n)) \longto \Zrm(\CM G(kl,1,r))$$ 
be the (surjective) morphism of algebras defined by
$$i_\g^*(e_{\chi_\l})=
\begin{cases}
e_{\chi_{\beta^\flat_{k,\gamma}(\l)}} & \text{if $\Core_k(\l)=\g$,}\\
0 & \text{otherwise.}\\
\end{cases}
$$
Then $i_\g^*(\FC_i(\Zrm(\CM G(l,1,n)))) \subset \FC_i(\Zrm(\CM G(kl,1,r)))$ 
for all $i$.
\end{coro}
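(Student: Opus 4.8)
The plan is to recognize Corollary~\ref{coro:amusant} as the explicit, type-$G(l,1,n)$ incarnation of the general Corollary~\ref{coro:filtration}. Concretely, I would set $W=G(l,1,n)$ and $W'=G(kl,1,r)$, take $\s$ to be a generator of $\mub_{kl}$ (the corresponding homothety on $V$), so that $\ZCB_{\!c}(l,1,n)^\s=\ZCB_{\!c}(l,1,n)^{\mub_{kl}}$, and let $\XC=\ZCB(\g)$ be the irreducible component attached to $\g$ by Theorem~\ref{theo:recapitulatif}(a). Since $\ZCB_{\!c}(l,1,n)$ is smooth by hypothesis, Corollary~\ref{coro:filtration} applies verbatim and yields $i_\XC^*(\FC_i(\Zrm(\CM W)))\subset \FC_i(\Zrm(\CM W'))$ for the abstractly defined map $i_\XC^*$. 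Thus the whole content of the corollary reduces to identifying the combinatorial map $i_\g^*$ of the statement with this abstract $i_\XC^*$.

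To carry out that identification, recall that $i_\XC^*$ is built from the injection $i_\XC\colon\Irr(W')\injto\Irr(W)$ induced by the inclusion $\XC^{\CM^\times}\subset\ZCB_{\!c}(l,1,n)^{\CM^\times}$ together with the labeling of $\CM^\times$-fixed points by irreducible characters via~\cite[Corollary~5.8]{gordon}. I would therefore compute $i_\XC$ explicitly on characters. By Theorem~\ref{theo:recapitulatif}(b) the isomorphism $i_\g\colon\ZCB_{\!c'}(kl,1,r)\longiso\ZCB(\g)$ satisfies $z^{(l)}_{(\beta^\flat_{k,\g})^{-1}(\mu)}=i_\g(z^{(kl)}_\mu)$ for every $\mu\in\PC^{kl}[r]$; reading this through the fixed-point labeling gives $i_\XC(\chi^{(kl)}_\mu)=\chi^{(l)}_{(\beta^\flat_{k,\g})^{-1}(\mu)}$.

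It then remains to dualize. Since $\beta^\flat_{k,\g}$ is a bijection $\PC^l_{k,\g}[n]\longbij\PC^{kl}[r]$, with $\PC^l_{k,\g}[n]=\{\l\in\PC^l[n];~\Core_k(\l)=\g\}$, the character $\chi^{(l)}_\l$ lies in the image of $i_\XC$ precisely when $\Core_k(\l)=\g$, in which case its unique preimage is $\chi^{(kl)}_{\beta^\flat_{k,\g}(\l)}$. Substituting into the defining formula for $i_\XC^*$ reproduces exactly the two cases of $i_\g^*$: namely $e_{\chi^{(l)}_\l}\mapsto e_{\chi^{(kl)}_{\beta^\flat_{k,\g}(\l)}}$ when $\Core_k(\l)=\g$ and $e_{\chi^{(l)}_\l}\mapsto 0$ otherwise. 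Hence $i_\g^*=i_\XC^*$ (which in particular re-proves that $i_\g^*$ is a surjective morphism of algebras), and the desired inclusion of filtration pieces is immediate from Corollary~\ref{coro:filtration}.

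The only delicate point, and the step I expect to demand the most care, is the bookkeeping of the two competing parametrizations of the $\CM^\times$-fixed points. The map $i_\XC$ is phrased through Gordon's labeling, whereas the quiver-theoretic parametrization used throughout this section differs from Gordon's by the flip $\l\mapsto\l^\flat$ (Proposition~\ref{prop:compare-Gordon}), and the map $\beta^\flat_{k,\g}$ has precisely been designed to absorb this twist. I would therefore verify that the same convention is used on both the $G(l,1,n)$ and the $G(kl,1,r)$ sides and that the flips cancel as intended; once the conventions are aligned, the identification $i_\g^*=i_\XC^*$ is purely formal.
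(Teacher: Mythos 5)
Your proposal is correct and is exactly the paper's (implicit) argument: Corollary~\ref{coro:amusant} is obtained by specializing Corollary~\ref{coro:filtration} to $\XC=\ZCB(\g)$ and using Theorem~\ref{theo:recapitulatif}(a),(b) to identify the abstract map $i_\XC^*$ with the combinatorial map $i_\g^*$, the flat twist $\beta^\flat_{k,\g}$ being precisely what reconciles Gordon's labeling with the quiver-theoretic one (Proposition~\ref{prop:compare-Gordon}). The only cosmetic point is that smoothness of $\ZCB_{\!c}(l,1,n)$ is a standing hypothesis of Theorem~\ref{theo:recapitulatif} rather than of the purely combinatorial corollary itself, so one should note that some smooth parameter $c$ exists (Corollary~\ref{coro:smooth:gl1n}) before invoking the geometric machinery.
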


\bigskip

\begin{rema}\label{rem:amusant}
Note that the statement of Corollary~\ref{coro:amusant} involves 
only the character theory of groups of type $G(l,1,n)$ and 
the combinatorics of partitions. However, our proof relies on the 
geometric results on quiver varieties proved here as well as 
the computation of the equivariant cohomology of smooth Calogero-Moser 
spaces done in~\cite{BS}.

It is a natural question to ask whether such a result can be 
proved directly by combinatorial methods.\finl
\end{rema}

\bigskip

\section{Type $G_4$}\label{sec:g4}

\medskip

\boitegrise{{\bf Hypothesis.} {\it In this section, and only in this 
section, we assume that $V=\CM^2$, we identify $\Gb\Lb_\CM(V)$ with $\Gb\Lb_2(\CM)$, we fix a primitive 
third root of unity $\o$, we set 
$$s=\begin{pmatrix} \o & 0 \\ \o^2 & 1 \end{pmatrix} 
\qquad \text{and}\qquad 
t=\begin{pmatrix} 1 & -\o^2 \\ 0 & \o \end{pmatrix}
$$
and we assume that $W=\langle s,t\rangle$, so that $W$ is of type $G_4$ 
in Shephard-Todd classification. We denote by $(y_1,y_2)$ the canonical basis 
of $V$ and by $(x_1,x_2)$ its dual basis. We also set $\r=1+2\o$, 
so that $\r^2=-3$.}}{0.75\textwidth}

\bigskip

In this case, we will check Conjecture~F even if $\ZCB_{\! c}$ is not smooth 
(and for all elements $\s \in \Gb\Lb_\CM(V)$ normalizing $W$, as any such elements 
belong to $\CM^\times \cdot W$, as we will see in the proof below). 

\bigskip

\begin{theo}\label{theo:g4}
If $W$ is of type $G_4$, then Conjecture~F holds.
\end{theo}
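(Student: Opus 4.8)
The plan is to reduce Conjecture~F for $W=G_4$ to an explicit, finite computation of fixed loci of homotheties on $\ZCB_{\! c}$, and then to carry out that computation by hand or machine, since $W$ is small and here we can invoke neither the quiver-variety techniques (which require $W$ of type $G(l,1,n)$) nor smoothness.

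First I would pin down the normalizer. As $W$ acts irreducibly on $V=\CM^2$, Schur's Lemma gives that its centralizer in $\GL_\CM(V)$ is $\CM^\times$, so an element of $N_{\GL_\CM(V)}(W)$ lies in $\CM^\times\cdot W$ if and only if it induces an inner automorphism of $W$. Since $W\simeq \SL_2(\FM_3)$, its only non-trivial outer automorphism interchanges the two Galois-conjugate faithful $2$-dimensional representations, and conjugation inside the fixed representation $V$ cannot realize it; hence $N_{\GL_\CM(V)}(W)=\CM^\times\cdot W$. Consequently every finite-order $\s$ normalizing $W$ has the form $\s=\l w$ with $w\in W$ and $\l\in\CM^\times$ a root of unity (indeed $\l^2=\det(\s)\det(w)^{-1}$ is one). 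Now $w$ acts on $\Hb_c$ by conjugation by $w\in W\subset\Hb_c$, hence trivially on the centre $\Zb_c$ and on $\ZCB_{\! c}$, so $\s$ acts exactly as the homothety $\l\,\Id_V$. Comparing the resulting action on $V$ and $V^*$ with the grading shows that $\l\,\Id_V$ acts as the standard $\CM^\times$-action at the value $\l^{-1}$, whence $\ZCB_{\! c}^\s=\ZCB_{\! c}^{\mub_d}$ for $d=\mathrm{ord}(\l)$. This reduces the theorem to describing the irreducible components of $\ZCB_{\! c}^{\mub_d}$, for the standard $\CM^\times$-action and arbitrary $c$.

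Next I would compute the centre explicitly. Starting from the PBW decomposition~\eqref{eq:pbw} and the relations~$(\HC_c)$ for the generators $s,t$, one obtains by direct (computer) calculation a presentation of $\Zb_c$ as a $\ZM$-graded $\CM$-algebra, with generators and relations depending polynomially on $c$ and valid for all $c$. This exhibits $\ZCB_{\! c}$ as an explicit $\CM^\times$-stable closed subvariety of an affine space, recording the $\CM^\times$-weight of each coordinate. The family $\ZCB_{\! c}^{\mub_d}$ takes only finitely many distinct values as $d$ varies (it is eventually constant under divisibility, equal to $\ZCB_{\! c}^{\CM^\times}$), so only finitely many $d$ must be treated. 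For each such $d$ I would read off $\ZCB_{\! c}^{\mub_d}$ from the equations, pass to the reduced structure, decompose it into irreducible components, and identify each component with a Calogero--Moser space of a reflection subquotient. The candidates are few: the trivial pair $(0,1)$ (isolated points, as in Remark~\ref{rem:stupide}); the rank-one pairs $(\CM,\mub_l)$ with $l\in\{1,3,6\}$, whose spaces are the explicit surfaces of Example~\ref{ex:cyclique}; and the rank-two pairs given by $G_4$ itself and its reducible reflection subgroups. For each identification I would exhibit the parameter $c'$ and check that the isomorphism respects the $\CM^\times$-action by matching coordinate weights.

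I expect the main obstacle to be the non-smooth case. When $\ZCB_{\! c}$ is singular, the components of $\ZCB_{\! c}^{\mub_d}$ may themselves be singular and positive-dimensional, so identifying a component $\XCB$ amounts to matching its explicit defining equations---together with the $\CM^\times$-weights and the reduced structure---against the equally explicit equations of the candidate spaces (for rank one, the surfaces $\prod_i(e-lk_i')=xy$ of Example~\ref{ex:cyclique}) and solving for the correct $c'$ in each stratum of the parameter space of $c$. The difficulty is thus organizational---a careful bookkeeping of finitely many cases and parameter specializations---rather than conceptual.
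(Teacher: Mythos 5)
Your proposal is correct and follows essentially the same route as the paper's proof: reduce $\s$ to a root of unity acting through the grading (the paper does this by conjugating $\s$ to commute with $s$ and $t$ and invoking irreducibility, rather than via $\mathrm{Out}$ of $\SL_2(\FM_3)$, but the conclusion $N_{\Gb\Lb_\CM(V)}(W)=\CM^\times\cdot W$ is the same), then use a computer-calculated graded presentation of $\Zb_c$ valid for all $c$, observe that the bounded degrees of the generators leave only finitely many values of $d$ to treat, and identify the positive-dimensional components of $\ZCB_{\! c}^{\mub_d}$ with cyclic-group Calogero--Moser surfaces as in Example~\ref{ex:cyclique} while disposing of isolated points by Remark~\ref{rem:stupide}. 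The paper's proof (including its treatment of the non-smooth case) is exactly this plan carried out, with $d\in\{2,3\}$ the surviving cases and $\langle w\rangle$, $w$ of order $4$, resp.\ $\langle -s\rangle$ of order $6$, as the reflection subquotients.
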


\bigskip

\begin{proof}
Let $\s \in \Gb\Lb_2(\CM)$ be of finite order and normalizing $W$. The conjugacy class of $s$ is 
$$\{s,t,sts^{-1},tst^{-1}\},$$
and is the unique conjugacy class of reflections of determinant $\o$. Since $\s s \s^{-1}$ is a reflection 
of $W$ of determinant $\o$, there exists $w \in W$ such that $w\s$ commutes with $s$. 
Since $W$ acts trivially on $\ZCB_{\! c}$, we may, and we will, assume that $\s s \s^{-1} = s$. 
Now, $\s t \s^{-1}$ belongs to this conjugacy class and is different from $s$, 
so there exists $i \in \ZM$ such that $s^i \s$ commutes with $t$. So, replacing 
$\s$ by $s^i\s$, we may, and we will, assume that $\s$ commutes with $s$ and $t$. 
In other words, $\s$ is a root of unity. 

Since $\Zrm(W) \simeq \mub_2$, we may also assume that the order of $\s$ is even, 
equal to $2d$ for some $d \ge 2$ (because the case $d=1$ is obvious). 
Let $I_d$ denote the ideal of $\Zb_c$ generated by the $\s(z)-z$, where 
$z$ runs over a set of generators of $\Zb_c$. Then 
$$\CM[\ZCB_{\! c}^\s]=\Zb_c/\sqrt{I_d}.\leqno{(\clubsuit)}$$
Let $\z$ denote the root of unity by which $\s$ acts: then $\s(z)-z=(\z^l-1)z$ 
if $z$ is homogeneous of $\ZM$-degree $l$. This shows that $I_d$ is generated 
by the generators of $\Zb_c$ of degree not divisible by $2d$. 
As we will see below, there is a set of generators of $\Zb_c$ whose $\ZM$-degrees 
belong to the interval $[-6,6]$. So, if $d \ge 4$, then $\ZCB_{\! c}^\s=\ZCB_{\! c}^{\CM^\times}$, 
so it is a finite set 
and then Conjecture~F is checked in this case. So we may assume that $d \in \{2,3\}$. 

Note that there is only one $W$-orbit of reflecting hyperplanes. So, 
as in Example~\ref{ex:cyclique}, we set 
$$k_j=\frac{1}{l} \sum_{i=1}^{2} \o^{-i(j-1)} c_{s^i},$$
so that $k_0+k_1+k_2=0$. 

In~\cite{BT}, Thiel and the first author have provided algorithms for computing 
presentations of the algebra $\Zb_c$, and such algorithms have been included 
in the {\tt MAGMA} package {\tt CHAMP} developed by Thiel~\cite{thiel champ} 
(about the {\tt MAGMA} software, see~\cite{magma}). These algorithms provide 
an explicit presentation of the center $\Zb_c$ given as follows:
\begin{itemize}
\item[] Generators:

\centerline{$Y_1$, $Y_2$, $X_1$, $X_2$, $\euler$, $A$, $B$, $D$;}

\medskip

\item[] Relations:

\centerline{${\small \begin{cases}
(\Zrm 1) & 
\euler^4 + 4\r Y_1 X_1 - A B + 3\r \euler D+18 (k_1^2+k_1k_2+k_2^2) \euler^2 \\ & +756 (k_1^2k_2+k_1k_2^2) \euler=0\\
(\Zrm 2) & 
\r \euler^3 A - 4 X_1 \euler B - D A - \r Y_1 X_2 - 30\r (k_1^2+k_1k_2+k_2^2) \euler A \\
&-108\r (k_1^2k_2+k_1k_2^2) A=0 \\
(\Zrm 3) & 
Y_2 A - \euler B^2 + \r Y_1 D+54 (k_1^2+k_1k_2+k_2^2) Y_1 \euler-324 (k_1^2k_2+k_1k_2^2) Y_1=0 \\
(\Zrm 4) & 
\r \euler^3 B -4 X_1 Y_2 - \r Y_1 \euler A - B D-30\r (k_1^2+k_1k_2+k_2^2) \euler B\\
&-108\r (k_1^2k_2+k_1k_2^2) B=0 \\
(\Zrm 5) & 
\euler A^2 - 4 X_1 D - X_2 B+72 \r (k_1^2+k_1k_2+k_2^2) X_1 \euler-432 \r (k_1^2k_2+k_1k_2^2) X_1=0 \\
(\Zrm 6) & 
Y_2 \euler^3 + 3 Y_1^2 A  -3 Y_1 \euler^2 B- B^3 + 3 \r Y_2 D+144 (k_1^2+k_1k_2+k_2^2) Y_1 B\\
&+18 (k_1^2+k_1k_2+k_2^2) Y_2 \euler+756 (k_1^2k_2+k_1k_2^2) Y_2=0 \\
(\Zrm 7) & 
4 \r X_1 \euler^2 A - A^3 -16 X_1^2 B + X_2 \euler^3 +3\r X_2 D+18 (k_1^2+k_1k_2+k_2^2) X_2 \euler\\
&-192 \r (k_1^2+k_1k_2+k_2^2) X_1 A+756 (k_1^2k_2+k_1k_2^2) X_2=0 \\
(\Zrm 8) & 
\euler^6 +4 \r Y_1 X_1 \euler^2 +2 \r D \euler^3 - Y_2 X_2 + D^2-36 (k_1^2+k_1k_2+k_2^2) \euler^4\\
&+12 \r (k_1^2+k_1k_2+k_2^2) \euler D +216 \r (k_1^2k_2+k_1k_2^2) D+1080 (k_1^2k_2+k_1k_2^2) \euler^3\\
&+1620 (k_1^2+k_1k_2+k_2^2)^2 \euler^2-3888 (k_1^2k_2+k_1k_2^2) (k_1^2+k_1k_2+k_2^2) \euler \\
& -34992 (k_1^2k_2+k_1k_2^2)^2=0 \\
(\Zrm 9) & \r Y_1 A^2+4 X_1 B^2 +4 \euler^3 D-\r Y_2 X_2 +4 \r D^2-24 \r (k_1^2+k_1k_2+k_2^2) \euler^4\\
&-288 (k_1^2+k_1k_2+k_2^2) \euler D-576 (k_1^2+k_1k_2+k_2^2) Y_1 X_1 \\ 
&+2160 \r (k_1^2+k_1k_2+k_2^2)^2 \euler^2 +432 \r (k_1^2k_2+k_1k_2^2) \euler^3-864 (k_1^2k_2+k_1k_2^2) D\\
& +20736 \r (k_1^2k_2+k_1k_2^2) (k_1^2+k_1k_2+k_2^2) \euler+46656 \r (k_1^2k_2+k_1k_2^2)^2=0\\
\end{cases}}
$}
\end{itemize}
The generators have $\ZM$-degrees given by the following map
$$(Y_1,Y_2,X_1,X_2,\euler,A,B,D) \longmapsto (-4,-6,4,6,0,2,-2,0).$$
Moreover, in this presentation, $\CM[V]^W=\CM[X_1,X_2]$ and $\CM[V^*]^W=\CM[Y_1,Y_2]$.

We can deduce from this that $\ZCB_{\! c}$ is smooth if and only if 
$$k_0k_1k_2(k_0-k_1)(k_0-k_2)(k_1-k_2) \neq 0.\leqno{(\diamondsuit)}$$

\medskip

\noindent{\bfit First case: assume that $d=2$.} In this case, the above presentation shows that, whenever~($\diamondsuit$) holds, then 
$\ZCB_{\! c}^\s$ has four irreducible components $\XCB$, $p$, $q$ and $r$ where $\XCB$ 
has dimension $2$ and is isomorphic to 
$$\XCB \simeq \{(x,y,e)\in \CM^3~|~e(e-12k_0)(e-12k_1)(e-12k_2) = xy\},\leqno{(\heartsuit)}$$
and $p$, $q$ and $r$ are three points (which belong to $\ZCB_{\! c}^{\CM^\times}$). 
If~($\diamondsuit$) does not hold, then $\ZCB_{\! c}^\s$ still contains 
an irreducible component of dimension $2$ with the same description as above, 
and a few other isolated points (the number depends on the exact values of 
the $k_i$). By Remark~\ref{rem:stupide}, we only need to consider 
the irreducible component $\XCB$. But it follows from~($\heartsuit)$ and~Example~\ref{ex:cyclique} that then $\XCB$ is isomorphic 
to the Calogero-Moser associated with a cyclic group of order $4$ acting on a 
one-dimensional space: as there is an element $w$ of order $4$ in $W$, 
one can choose for $V'$ an eigenspace of $w$ and for $W'$ the cyclic group $\langle w\rangle$ and 
Conjecture~F is checked in this case (note also that the map $c \mapsto c'$ 
is linear).

\bigskip

\noindent{\bfit Second case: assume that $d=3$.} In this case, the above presentation 
shows that, whenever~($\diamondsuit$) holds, then 
$\ZCB_{\! c}^\s$ has two irreducible components $\XCB$ and $p$ where $\XCB$ 
has dimension $2$ and is isomorphic to 
$$\XCB \simeq \{(x,y,e)\in \CM^3~|~(e+6k_0)(e+6k_1)(e+6k_2)(e-12k_0)(e-12k_1)(e-12k_2) = xy\},\leqno{(\spadesuit)}$$
and $p$ is a point (which belongs to $\ZCB_{\! c}^{\CM^\times}$). 
If~($\diamondsuit$) does not hold, then $\ZCB_{\! c}^\s$ still contains 
an irreducible component of dimension $2$ with the same description as above, 
and maybe one extra point according to the values of the $k_i$. 
By Remark~\ref{rem:stupide}, we only need to consider 
the irreducible component $\XCB$. 
But it follows from~($\spadesuit)$ and Example~\ref{ex:cyclique} that then $\XCB$ is isomorphic 
to the Calogero-Moser associated with a cyclic group of order $6$ acting on a 
one-dimensional space: as $-s \in W$ is of order $6$, 
one can choose for $V'$ the $\o$-eigenspace of $s$ and for $W'$ the cyclic group $\langle -s\rangle$ and 
Conjecture~F is checked in this case (note also that the map $c \mapsto c'$ 
is linear).
\end{proof}

\bigskip

\end{document}